\numberwithin{equation}{section}
\numberwithin{figure}{section}
\theoremstyle{plain}
\newtheorem{thm}{\protect\theoremname}[section]
\theoremstyle{definition}
\newtheorem{defn}[thm]{\protect\definitionname}
\theoremstyle{remark}
\newtheorem{rem}[thm]{\protect\remarkname}
\theoremstyle{plain}
\newtheorem{lem}[thm]{\protect\lemmaname}
\theoremstyle{plain}
\newtheorem{prop}[thm]{\protect\propositionname}
\theoremstyle{plain}
\theoremstyle{plain}
\newtheorem{cor}[thm]{\protect\corollaryname}
\theoremstyle{plain}
\theoremstyle{plain}
\theoremstyle{plain}
\newtheorem*{thm*}{\protect\theoremname}
\providecommand{\corollaryname}{Corollary}
\providecommand{\definitionname}{Definition}
\providecommand{\lemmaname}{Lemma}
\providecommand{\propositionname}{Proposition}
\providecommand{\conjecturename}{Conjecture}
\providecommand{\remarkname}{Remark}
\providecommand{\theoremname}{Theorem}
\providecommand{\hypothesisname}{Hypothesis}
\providecommand{\examplename}{Example}
\newcommand{\cA}{\mathcal{A}}
\newcommand{\cC}{\mathcal{C}}
\newcommand{\cF}{\mathcal{F}}
\newcommand{\cI}{\mathcal{I}}
\newcommand{\cN}{\mathcal{N}}
\newcommand{\cP}{\mathcal{P}}
\newcommand{\BB}{\mathbb{B}}
\newcommand{\EE}{\mathbb{E}}
\newcommand{\NN}{\mathbb{N}}
\newcommand{\PP}{\mathbb{P}}
\newcommand{\RR}{\mathbb{R}}
\begin{document}

\title{$C^{\infty}-$ regularization  of ODEs perturbed by noise}

\author{Fabian Andsem Harang}

\address{FH: Department of Mathematics, University of Oslo,\\ Postboks 1053 Blindern\\
Oslo, 0316,
Norway\\
fabianah@math.uio.no}

\author{Nicolas Perkowski}

\address{NP: Institut f\"ur Mathematik, Freie Universit\"at Berlin, \\ Arnimallee 7\\
14195 Berlin,
Germany\\
perkowski@math.fu-berlin.de}

\begin{abstract}
We study ODEs with vector fields given by general Schwartz distributions, and we show that if we perturb such an equation by adding an ``infinitely regularizing'' path, then it has a unique solution and it induces an infinitely smooth flow of diffeomorphisms. We also introduce a criterion under which the sample paths of a Gaussian process are infinitely regularizing, and we present two processes which satisfy our criterion. The results are based on the path-wise
space-time regularity properties of local times, and solutions are constructed using the approach of Catellier-Gubinelli based on non-linear
Young integrals. 
\end{abstract}

\keywords{Ordinary Differential Equations, Stochastic Regularization, Young Integration, Schwartz Distributions }
\thanks{{\em MSC2010:} 60H05, 60H20, 60L99, 45D05, 34A12  \\
{\em Acknowledgments} We are grateful to the referee for his very careful reading of the paper, and for pointing out the observation and computation in Remark~\ref{rem:weaker LND}. F.H. gratefully acknowledge funding from the Research Council of Norway (RCN) project: STORM, project number: 274410. Part of the work was carried out while N.P. was employed at Max-Planck-Institut f\"ur Mathematik in den Naturwissenschaften, Leipzig. N.P. gratefully acknowledges financial support by DFG through the Heisenberg program.}

\maketitle

\section{Introduction and main results }

The regularizing effect of adding an irregular stochastic process
to an ill-posed ordinary differential equations (ODE) has been extensively studied over the last fifty years. Still it is one of the most surprising results at the intersection of analysis and probability
theory. Consider the integral version of an ODE under perturbation 
of a path $w:[0,T]\rightarrow \mathbb{R}^d$ for some $\epsilon\in \mathbb{R}$  given by 
\begin{equation}
y_{t}^{x}=x+\int_{0}^{t}b\left(y_{r}\right)dr+\epsilon w_{t},\qquad x\in \mathbb{R}^d. \label{eq: ODE perturbation}
\end{equation}
 If $\epsilon=0$, then the classical theory of ODEs would essentially require local Lipschitz
continuity of the vector field $b$ to obtain the uniqueness of solutions. However, for $\epsilon\neq 0$ and suitable $w$
one can show the existence of a unique solution under more general assumptions on the vector field $b$. This has been studied in a number of papers, e.g. \cite{Zvonkin1974, Veretennikov1981, Bass2001,Krylov2005,Meyer-Brandis2010,Menoukeu2013,NilProBan,Catellier2016}.
In recent years particular interest has been directed towards the
regularizing properties of a fractional Brownian motion with Hurst
parameter $H\in\left(0,1\right)$. It has been proven, both by probabilistic
means, e.g. in \cite{NilProBan}, and by path-wise analysis in \cite{Catellier2016},
that the lower we choose $H$, the more general assumptions we may choose on $b$. In \cite{Catellier2016} Catellier and Gubinelli show that uniqueness may hold for Equation~\ref{eq: ODE perturbation} even if $b$ is only a distribution. More precisely, they show that if $w$ is a fractional Brownian motion with Hurst index $H$, then for all $b\in B_{\infty,\infty}^{\alpha}$ with $\alpha>1-\frac{1}{2H}$, where $B_{\infty,\infty}^{\alpha}$
is a Besov space of regularity $\alpha$, the solution to (\ref{eq: ODE perturbation})
almost surely exists uniquely. The null set outside of which the uniqueness fails depends on the initial condition $x$, the noise $w$,  and on the vector field $b \in  B^\alpha_{\infty, \infty}$. Under the stronger regularity assumption  $\alpha > 2 - \frac{1}{2H}$, they show that the
flow $x\mapsto y_{t}^{x}$ is Lipschitz, but even then the null set may depend on the drift coefficient\footnote{Theorem 1.14 in \cite{Catellier2016} in fact claims that $\alpha>\frac{3}{2}-\frac{1}{2H}$ is sufficient to have a Lipschitz flow. However, as pointed out by the referee, there seems to be no full proof of this claim, at least not in the case $\alpha \le 0$. Corollary 2.20 of \cite{Catellier2016} would require $\alpha>2-\frac{1}{2H}$, and we have therefore chosen to cite this requirement to be on the safe side.}. Catellier and Gubinelli also identify a path-wise condition for $w$ under which uniqueness holds for all sufficiently regular $b$ (measured in terms of ``Fourier-Lebesgue regularity'' rather than Besov regularity) and all initial conditions $x$, see \cite[Theorem~1.14]{Catellier2016}.
\\

In the more recent work \cite{ProskeBanosAmine} the authors consider an infinite sequence of fractional Brownian motions
$\left(\lambda_{k}w^{H_{k}}\right)_{k\geq0},$ where $\left(\lambda_{k}\right)_{k\geq0}$ and $\left(H_{k}\right)_{k\geq0}$
are suitable null sequences, and
\begin{equation}\label{proske process}
\mathbb{B}_{t}:=\sum_{k\geq0}\lambda_{k}w_{t}^{H_{k}}.
\end{equation}
Using techniques developed in \cite{NilProBan}, they show that the equation 
\[
y_{t}^{x}=x+\int_{0}^{t}b\left(r,y_{r}\right)dr+\mathbb{B}_{t}
\]
has a unique strong solution (in the probabilistic sense) as long
as $b\in L^{p}\left(\left[0,T\right],L^{q}\left(\mathbb{R}^{d},\mathbb{R}^{d}\right)\right)\cap L^{1}\left(\left[0,T\right],L^{\infty}\left(\mathbb{R}^{d},\mathbb{R}^{d}\right)\right)$,
and furthermore they show that the flow map $x\mapsto y_{\cdot}^{x}$ is
in $C^{\infty}$. The techniques used to obtain this results are mainly
based on Malliavin calculus and probabilistic methods, and again the null set outside of which the results fail might depend on $b$. \\

In the current article we unite the two perspectives of \cite{Catellier2016} and \cite{ProskeBanosAmine} and we provide a general framework to obtain existence
and uniqueness as well as differentiability of the flow
map associated to Equation (\ref{eq: ODE perturbation}) for some
sufficiently irregular paths $w$. In contrast to both \cite{Catellier2016} and \cite{ProskeBanosAmine}, our analysis is purely
path-wise.

Similarly as in \cite{Catellier2016} we formulate the equations in  the framework of non-linear Young theory. But rather than considering directly the regularity of the random map $(t,x) \mapsto \int_0^t b(x+w_s) ds$, which can only be controlled outside of a null set that depends on $b$, we first control the regularity of the local time $L$ of $w$, and then write $\int_0^t b(x+w_s) ds = b\ast (L(-\cdot))$. In this way the null set is independent of $b$. There is also such a purely path-wise result in \cite{Catellier2016}, but the regularity of $L$ is given in a Fourier-Lebesgue space and therefore in applications also $b$ has to be in a suitable Fourier-Lebesgue space -- or we need to apply embedding results to derive the regularity of $L$ in Sobolev spaces. Here we work with more common function spaces (Sobolev spaces rather than Fourier-Lebesgue spaces), which has the advantage that we get path-wise regularizing effects directly for $b$ in Hölder spaces or even $L^2$ Sobolev spaces. A second advantage is that our analysis applies to any regularizing path, and not only stochastic paths. That is, given a path with a sufficiently regular local time, existence and uniqueness of ODEs of the form \eqref{eq: ODE perturbation} is readily obtained.

 To this end, we identify a class of Gaussian processes with exceptional regularizing properties, and we use the space-time regularity of their local times.  In fact, if
$w:\left[0,T\right]\rightarrow\mathbb{R}^{d}$ is a 
Gaussian process with co-variance function satisfying some simple conditions (translating roughly speaking to sufficient irregularity), then its local time $L:\Omega\times\left[0,T\right]\times\mathbb{R}^{d}\rightarrow\mathbb{R}$ almost surely
has infinitely many derivatives in its spatial variable for
almost all $\omega\in\Omega$. So to analyze the ODE~\eqref{eq: ODE perturbation} we assume that $w$ is a fixed path with smooth local time. By definition of the local time, we have say for bounded measurable $b$ and $x\in \mathbb{R}^d$ 
\begin{equation}\label{eq:first int by parts}
\int_{s}^{t}b\left(x-w_{r}\left(\omega\right)\right)dr=\left[b\ast L_{s,t}(\omega)\right](x),
\end{equation}
where $f_{s,t} := f_t - f_s$ for any function $f$, and $\ast$ denotes convolution. So for regular $L_{s,t}$ we can make sense of $\int_s^t \nabla b(x-w_r(\omega)) dr = [b\ast \nabla L_{s,t}(\omega)](x)$, even if $b$ is not differentiable.
 This observation allows us to obtain bounds for integrals appearing in (\ref{eq: ODE perturbation}) which only depend on low regularity norms of $b$: consider, for convenience of notation, Equation~\eqref{eq: ODE perturbation} with $\epsilon = - 1$. Then  $\tilde{y}^x_t=y^x_t+w_t$ solves
\begin{equation}\label{eq:modi ODE}
\tilde{y}^x_t =x +\int_{0}^t b(\tilde{y}^x_r-w_r)dr,
\end{equation}
and the integral term on the right hand side is very similar to the one on the left hand side of (\ref{eq:first int by parts}). In fact, the integral $\int_{0}^t b(\tilde{y}^x_r-w_r)dr$ can formally be interpreted as 
\begin{equation}\label{eq:abstr avg op}
\int_{0}^t b(\tilde{y}^x_r-w_r)dr=\int_{0}^t \left[b\ast L_{dr}\right](\tilde{y}^x_r). 
\end{equation}
For now this expression is purely formal as we need to make sense of the differential $L_{dr}$, which later we will do via non-linear Young integration (giving a simplified derivation of results from~\cite{Catellier2016}). 
\\

 We are mainly interested in ``infinitely regularizing'' paths $w$.  In this case, we will show that the solution $y^x$ to (\ref{eq: ODE perturbation})
exists uniquely (up to a possibly finite explosion time), and the
flow $x\mapsto y^x$ is $C^{\infty}$, under the sole assumption that $b \in \mathscr S^\prime$ is a Schwartz
distribution. 
Let us first specify what we mean by an infinitely regularizing path:

\begin{defn}\label{def:infinitely-regularizing}
We say that a measurable path $w:[0,T]\rightarrow\mathbb{R}^d$ is \emph{infinitely  regularizing} if the local time $L:[0,T]\times \mathbb{R}^d\rightarrow\mathbb{R}$, defined in Section \ref{subsec:Occupation-measures-and},  is in $C_{T}^{\gamma}\mathcal{C}^\alpha$  for all $\gamma\in(\frac{1}{2},1)$ and $\alpha\in \mathbb{R}$.
\end{defn}

In the above definition, and throughout the text,  the parameter $T>0$ will always be finite, and  the space $C_{T}^{\gamma}\mathcal{C}^\alpha:=C^\gamma([0,T],\mathcal{C}^\alpha(\mathbb{R}^d))$ denotes  the space of H\"older continuous functions $h:[0,T]\rightarrow \mathcal{C}^\alpha (\mathbb{R}^d )$ with values in the Besov space $\mathcal{C}^\alpha(\mathbb{R}^d):= B^\alpha_{\infty,\infty}$. More details on these spaces can be found in Section \ref{beso spaces}. 

Our first main result is that existence and uniqueness hold for ODEs perturbed by the path $w$, with drift coefficients given by general Schwartz distributions in $\mathscr{S}^\prime$. Moreover,  the flow mapping  $x\mapsto y^x_\cdot$ is infinitely differentiable.

\begin{thm}\label{thm:main}
Let $b\in\mathscr{S}^{\prime}$ be a Schwartz
distribution, and consider a continuous infinitely regularizing path $w:\left[0,T\right]\rightarrow\mathbb{R}^{d}$
as in Definition \ref{def:infinitely-regularizing}. 
Then for all $x \in \mathbb R^d$ there exists $T^\ast = T^\ast(x) \in (0,T] \cup \{\infty\}$ such that there is a unique solution to the equation 
\[
y_{t}^{x}=x+\int_{0}^{t}b\left(y_{r}^x\right)dr+w_{t},
\]
 in $C\left(\left[0,T^\ast\right) \cap [0,T],\mathbb{R}^{d}\right)$, interpreted in the sense of Definition~\ref{def:perturbed-ode}. For $T^\ast(x) < \infty$ we have $\lim_{t \uparrow T^\ast(x)} |y^x_t| = \infty$. Moreover, the map $x \mapsto T^\ast(x)^{-1}$ is locally bounded, and if $\tau < T^\ast(x)$ for all $x \in U$ for an open set $U$, then the flow mapping $U \ni x \mapsto y^x_\cdot \in C([0,\tau],\mathbb R^d)$ is infinitely Fr\'echet differentiable.

\end{thm}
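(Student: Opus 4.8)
The plan is to rewrite the equation as a non-linear Young equation for the shifted path $\tilde y^x := y^x - w$, solve it locally by a contraction argument, patch the local solutions into a maximal one carrying the explosion time $T^\ast(x)$, and finally differentiate the resulting flow, exploiting that the averaged vector field is spatially as smooth as we wish. For the first step: exactly as in \eqref{eq:modi ODE}, a continuous path $y^x$ solves the equation if and only if $\tilde y^x_t := y^x_t - w_t$ solves $\tilde y^x_t = x + \int_0^t b(\tilde y^x_r + w_r)\,dr$. Freezing the spatial argument and using the occupation-times formula, $\int_s^t b(z + w_r)\,dr = (b\ast \bar L_{s,t})(z)$ with $\bar L_{s,t}(a) := L_t(-a) - L_s(-a)$, so we set $A_{s,t} := b\ast\bar L_{s,t}$ and read the equation as the non-linear Young equation $\tilde y^x_t = x + \int_0^t A_{dr}(\tilde y^x_r)$, which is the content of Definition~\ref{def:perturbed-ode}. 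The crucial observation is that, although $b\in\mathscr{S}^{\prime}$ is merely a tempered distribution, $L_t$ is supported in the compact set $\overline{w([0,T])}$; hence on any ball $B_M$ the field $A_{s,t}|_{B_M}$ depends only on $\chi b$ for a compactly supported cutoff $\chi$. Since a compactly supported distribution lies in some $\mathcal{C}^\beta$ and $L\in C_T^\gamma\mathcal{C}^\alpha$ for every $\alpha\in\mathbb{R}$, a Besov convolution estimate yields $A|_{B_M}\in C_T^\gamma\mathcal{C}^{\alpha^\prime}$ for every $\gamma\in(\tfrac12,1)$ and every $\alpha^\prime\in\mathbb{R}$. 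Replacing $b$ by a cutoff, we may therefore assume $A\in C_T^\gamma\mathcal{C}^{\alpha^\prime}$ globally, at the price that the resulting solution solves the original equation only while it remains in $B_M$.

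\emph{Local existence, uniqueness, and the explosion time.} For $A\in C_T^\gamma\mathcal{C}^2$ with $\gamma>\tfrac12$, the non-linear Young / sewing estimates recalled in the earlier sections give, for sufficiently regular paths $\psi$, a bound of the form $\bigl|\int_s^t A_{dr}(\psi_r) - A_{s,t}(\psi_s)\bigr|\lesssim \|A\|_{C_T^\gamma\mathcal{C}^2}\,(1+\|\psi\|)\,(t-s)^{2\gamma}$ with $2\gamma>1$; since any solution inherits the time regularity $\gamma$ of $A$, a Banach fixed point on a short interval $[0,\delta]$, with $\delta$ depending only on $\|A\|_{C_T^\gamma\mathcal{C}^2}$, produces a unique local solution, and local uniqueness globalizes in the usual way. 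Applying this to the cut-off fields $A^{(M)}$ (which coincide with $A$ on $B_M$) and letting $M\to\infty$ gives a maximal solution on $[0,T^\ast(x))\cap[0,T]$; if $T^\ast(x)\le T$, then the solution must leave every ball, so $\limsup_{t\uparrow T^\ast(x)}|\tilde y^x_t|=\infty$, and since $w$ is bounded this forces $\lim_{t\uparrow T^\ast(x)}|y^x_t|=\infty$. Because the guaranteed existence time $\delta$ depends only on the $C_T^\gamma\mathcal{C}^2$-norm of a cut-off field that can be chosen uniformly over bounded sets of initial data, $x\mapsto T^\ast(x)^{-1}$ is locally bounded; moreover, if $\tau<T^\ast(x)$ for all $x$ in an open set $U$, then, after shrinking $U$ slightly, the solutions $(\tilde y^x)_{x\in U}$ stay up to time $\tau$ inside one fixed ball and hence solve a single globally regular equation.

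\emph{Differentiability of the flow.} Fix such $U$ and $\tau$ and work with the single field $A\in C_T^\gamma\mathcal{C}^{\alpha^\prime}$ (all $\alpha^\prime$) from the previous step. Differentiating the non-linear Young equation formally in $x$, the Jacobian $J^x_t := D_x\tilde y^x_t$ should solve the linear non-linear Young equation $J^x_t = \mathrm{Id} + \int_0^t (\nabla A)_{dr}(\tilde y^x_r)\,J^x_r$, which is well-posed because $\nabla A\in C_T^\gamma\mathcal{C}^{\alpha^\prime-1}$ with $\alpha^\prime-1$ still arbitrarily large. One then checks that $J^x$ is indeed the Fr\'echet derivative of $x\mapsto\tilde y^x$ by bounding the remainder $\tilde y^{x+h}-\tilde y^x - J^x h$ via the same sewing estimate, uniformly for $x\in U$. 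Inductively, $D_x^k\tilde y^x$ solves a linear non-linear Young equation driven by $\nabla A$ whose inhomogeneity is a polynomial expression in the lower-order derivatives and in $\nabla^j A$ for $j\le k$; all of these are well-posed since $A$ is spatially $\mathcal{C}^{\alpha^\prime}$ for every $\alpha^\prime$, and an induction on $k$ yields $x\mapsto\tilde y^x\in C^\infty(U,\,C([0,\tau],\mathbb{R}^d))$. Finally $y^x = \tilde y^x + w$ has the same regularity in $x$, which proves the theorem.

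\emph{Main obstacle.} The routine engine — the sewing lemma together with the fixed-point and stability estimates for non-linear Young equations — is available from the earlier sections. The genuinely delicate steps are, first, the localization that converts a general tempered-distribution drift into a globally regular averaged field while keeping precise track of the explosion time $T^\ast(\cdot)$ and its claimed properties, and second, verifying that the candidate Jacobians really are the Fr\'echet derivatives, uniformly over $U$, and propagating this verification through the induction on the order of differentiation.
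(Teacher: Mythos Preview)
Your strategy is correct and close in spirit to the paper's, but you make one genuinely different structural choice that is worth highlighting. Where the paper handles the polynomial growth of $T^w b$ for $b\in\mathscr{S}'$ by working in \emph{weighted} Besov spaces $\mathcal{C}^\alpha(\langle x\rangle^{-\kappa})$ (Proposition~\ref{prop:Average operator}) and then running the fixed-point argument of Proposition~\ref{lem:Local existence and uniqueness} directly with locally bounded coefficients $G,F$, you instead exploit the compact support of the local time to \emph{localize}: on each ball $B_M$ you replace $b$ by a compactly supported cutoff $\chi b\in\mathcal{C}^\beta$, obtain a globally smooth averaged field, and solve without weights. Both routes deliver the same maximal solution; yours is slightly more elementary (no weighted calculus), while the paper's weighted framework avoids tracking the cutoff parameter $M$ and yields the local boundedness of $x\mapsto T^\ast(x)^{-1}$ in one stroke. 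For the flow smoothness the paper also proceeds differently: rather than first confining all $\tilde y^{x'}$, $x'\in U$, to a common ball and then solving the linear Jacobian equations inductively, it packages $(y^x,\nabla y^x,\dots,\nabla^k y^x)$ into a single non-linear Young equation with a \emph{lower triangular structure} (Definition~\ref{def:lower triangular structure}) and proves in Lemma~\ref{lem:Triangular structure of y} that the explosion time of this enlarged system equals that of the first component $y^0=y^x$. This replaces your localization-then-induction scheme by a direct argument that the derivative equations cannot blow up before $y^x$ does.

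Two small points in your write-up should be tightened. First, your explosion argument only delivers $\limsup_{t\uparrow T^\ast}|\tilde y^x_t|=\infty$; the upgrade to an honest $\lim$ needs the (standard) observation that the guaranteed local existence time depends only on $|\tilde y^x_t|$, so a bounded subsequence $t_n\uparrow T^\ast$ would let you restart from $\tilde y^x_{t_n}$ and extend past $T^\ast$. Second, the step ``after shrinking $U$ slightly, the solutions $(\tilde y^{x'})_{x'\in U}$ stay up to time $\tau$ inside one fixed ball'' presupposes continuity of $x'\mapsto\tilde y^{x'}$ in $C([0,\tau])$ for the \emph{cut-off} equation; this is indeed routine once $A^{(M)}\in C_T^\gamma\mathcal{C}^2$ globally, but you should prove it (a Lipschitz estimate via the sewing bound suffices) before invoking it, since otherwise the confinement argument feeding into the differentiability induction is circular.
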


\begin{prop}
	Assume in the setting of Theorem~\ref{thm:main} that additionally $b \in B^\alpha_{p,q}$ for some $\alpha \in \mathbb R$ and $p,q \in [1,\infty]$. Then $T^\ast(x) = \infty$ for all $x \in \mathbb R^d$.
\end{prop}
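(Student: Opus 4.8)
We outline the proof. The plan is to exploit that the extra hypothesis $b\in B^\alpha_{p,q}$ makes the space-regularized drift \emph{globally} bounded in every Besov--H\"older space, so that the local lifespan provided by Theorem~\ref{thm:main} is bounded below uniformly in the initial position; a finite covering of $[0,T]$ then rules out explosion. Throughout, $\tilde y^x=y^x+w$ and, as in \eqref{eq:first int by parts}, $A_{s,t}:=b\ast L_{s,t}$, so that the equation for $\tilde y^x$ reads $\tilde y^x_t=x+\int_s^tA_{dr}(\tilde y^x_r)$ in the nonlinear Young sense of Definition~\ref{def:perturbed-ode}.

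\emph{Step 1: global control of $b\ast L$.} Since $w$ is continuous, $K:=\overline{w([0,T])}$ is compact with finite volume $V_T$, and every occupation-measure increment of $w$ over a subinterval, hence every local-time increment $L_{s,t}$, is supported in $K$; by infinite regularity each $L_{s,t}$ is moreover $C^\infty$. I would first record that therefore $L_{s,t}\in W^{k,p'}$ for every $k\in\mathbb N$ and every $p'\in[1,\infty]$, with
\[
\|L_{s,t}\|_{W^{k,p'}}\le V_T^{1/p'}\|L_{s,t}\|_{W^{k,\infty}}\le C(k)\,V_T^{1/p'}\,\|L\|_{C_T^\gamma\mathcal{C}^{k+1}}\,|t-s|^\gamma ,
\]
so that, by Sobolev embedding into Besov spaces, $L_{s,t}\in B^N_{p',\infty}$ for every $N$, with a bound of the same type. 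Taking $p'$ the conjugate exponent of $p$ and applying Young's inequality for convolution in Besov spaces (the gain of smoothness is unbounded because $L$ has all derivatives), one obtains $A\in C_T^\gamma\mathcal{C}^M$ for every $M\ge 1$, with a finite norm $\Lambda_M:=\|A\|_{C_T^\gamma\mathcal{C}^M}$ controlled only by $\|b\|_{B^\alpha_{p,q}}$, $V_T$, finitely many norms $\|L\|_{C_T^\gamma\mathcal{C}^m}$, and $d,\gamma$. The crucial point, in contrast with a general $b\in\mathscr{S}'$ (for which $b\ast L$ is still $C^\infty$ but may only be controlled on compacts, with polynomial growth at infinity), is that this bound is uniform over all of $\mathbb{R}^d$.

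\emph{Step 2: uniform local lifespan.} Next I would revisit the fixed-point estimate underlying Theorem~\ref{thm:main} with this global bound in hand. Fix $\kappa\in(0,1]$ with $\gamma(1+\kappa)>1$ (possible since $\gamma>\tfrac12$). The nonlinear Young sewing bound for $A\in C_T^\gamma\mathcal{C}^{1+\kappa}$ gives, on any interval $[s_0,s_0+h]\subseteq[0,T]$ and for any solution $\theta$ of $\theta_t=v+\int_{s_0}^tA_{dr}(\theta_r)$ started at $v\in\mathbb{R}^d$,
\[
\Big|\int_s^tA_{dr}(\theta_r)-A_{s,t}(\theta_s)\Big|\le C\,\Lambda_{1+\kappa}\,\|\theta\|_{C^\gamma([s_0,s_0+h])}\,|t-s|^{\gamma(1+\kappa)},
\]
while $|A_{s,t}(\theta_s)|\le\|A_{s,t}\|_{\infty}\le C\,\Lambda_{1+\kappa}\,|t-s|^\gamma$ by the \emph{global} sup-bound (using $\mathcal{C}^{1+\kappa}\hookrightarrow L^\infty$). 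Combining and absorbing $\|\theta\|_{C^\gamma}$, one gets $\|\theta\|_{C^\gamma([s_0,s_0+h])}\le 2C\Lambda_{1+\kappa}$ as soon as $C\Lambda_{1+\kappa}h^{\gamma\kappa}\le\tfrac12$, i.e. for $h\le h_0$ with $h_0$ depending only on $\Lambda_{1+\kappa}$ and $\gamma$; hence $\sup_{r\in[s_0,s_0+h_0]}|\theta_r-v|\le 2C\Lambda_{1+\kappa}h_0^\gamma=:C_0$, with $h_0,C_0$ independent of $s_0$ and of $v$. Local existence of such a $\theta$, its uniqueness, and its a priori local $\gamma$-H\"older regularity follow from the construction underlying Theorem~\ref{thm:main}.

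\emph{Step 3: no explosion.} Finally, suppose $T^\ast(x)<\infty$, so $T^\ast(x)\le T$. Put $s_k:=(kh_0)\wedge T^\ast(x)$ for $k=0,\dots,N$ with $N:=\lceil T^\ast(x)/h_0\rceil$. The restriction of $\tilde y^x$ to $[s_k,s_{k+1}]$ solves the shifted equation there from $\tilde y^x_{s_k}$, so Step 2 yields $\sup_{[s_k,s_{k+1}]}|\tilde y^x|\le|\tilde y^x_{s_k}|+C_0$; iterating from $\tilde y^x_0=x$ gives $\sup_{[0,T^\ast(x))}|\tilde y^x|\le|x|+NC_0<\infty$, and since $y^x=\tilde y^x-w$ with $w$ bounded on $[0,T]$, also $\sup_{[0,T^\ast(x))}|y^x|<\infty$. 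This contradicts $\lim_{t\uparrow T^\ast(x)}|y^x_t|=\infty$ from Theorem~\ref{thm:main}, so $T^\ast(x)=\infty$ for every $x\in\mathbb{R}^d$. I expect Step 1 to be the real content: recognizing that $b\in B^\alpha_{p,q}$ promotes $b\ast L$ from ``locally smooth, possibly growing'' to ``globally bounded in $\mathcal{C}^M$ for every $M$'', and doing so cleanly for all $p$ by using that $L_{s,t}$ is compactly supported (hence in every $W^{k,p'}$) together with Besov Young's inequality; Steps 2 and 3 are a rereading of the local theory with explicit constants and a standard continuation argument.
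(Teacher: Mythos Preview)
Your proposal is correct and follows essentially the same route as the paper. The paper's argument is packaged as Corollary~\ref{cor:Besov drift} (your Step~1) plus the last paragraph of Proposition~\ref{lem:Local existence and uniqueness} (your Steps~2--3), invoked together in Lemma~\ref{lem:Local existence and uniqueness perturbed ode}: the point in both cases is that $b\in B^\alpha_{p,q}$ upgrades $T^wb$ from the weighted space $C^\gamma_T\mathcal C^M(\langle x\rangle^{-\kappa})$ to the unweighted $C^\gamma_T\mathcal C^M$, whence the functions $F,G$ in the abstract fixed-point argument are globally bounded and the local step size $\tau$ is uniform in the initial condition. The only cosmetic difference is that the paper obtains the global bound via Besov duality, $|\langle b,L_{s,t}(\cdot-x)\rangle|\lesssim\|b\|_{B^\alpha_{p,q}}\|L_{s,t}\|_{B^{-\alpha}_{p',q'}}$, using translation invariance of the Besov norm and compact support of $L_{s,t}$ to drop from $\mathcal C^{-\alpha+\epsilon}$ to $B^{-\alpha}_{p',q'}$; you achieve the same conclusion via Young's convolution inequality after embedding the compactly supported $L_{s,t}$ into $W^{k,p'}$. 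Both are equivalent here.
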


It should be noted that all this holds for deterministic paths that are infinitely regularizing. However, the derivation of sharp spatio-temporal regularity results for the local times of deterministic functions (for example the Weierstrass function) is still an open and challenging problem,  although some progress has recently been made in this direction, see \cite{imkeller2020rough}.
Therefore, we show that there exist infinitely regularizing stochastic processes. In particular, we will prove the following theorem, outlining sufficient conditions for a Gaussian process to be infinitely regularizing.

\begin{thm}\label{thm: Cond for gaussian regualrizing}
Let $w:[0,T]\times\Omega\rightarrow \mathbb{R}^d$ be a centered Gaussian process on a complete filtered probability space $(\Omega,\cF,\{\cF_t\}_{t\in [0,T]},\PP)$. Suppose $w$ satisfies the following local non-determinism condition for any $\zeta\in (0,1)$
\begin{equation*}
    \inf_{t \in (0,T]}\inf_{s\in [0,t)} \inf_{\substack{z\in \RR^d: \\|z|=1}}  \frac{ z^T \mathrm{cov}(w_t|\cF_s)z}{(t-s)^{2\zeta}}>0,
\end{equation*}
where $\mathrm{cov}(w_t|\cF_s) := \EE[(w_t - \EE[w_t|\cF_s])(w_t - \EE[w_t|\cF_s])^T|\cF_s]$.
Then for almost all $\omega\in \Omega$ the path $t\mapsto w_t(\omega)$  is infinitely regularizing.
These conditions are satisfied by the log-Brownian motion of Definition \ref{def:log BM}, or the process $\mathbb{B}_t := \sum_k \lambda_k w^{H_k}_t$  in~\eqref{proske process}.
\end{thm}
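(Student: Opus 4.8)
The plan is to reduce the whole statement to a single moment estimate on $\int_s^t e^{i\langle\xi,w_r\rangle}\,dr$, extract that estimate from the local nondeterminism (LND) hypothesis, and then verify the hypothesis for the two processes by a short computation. By the occupation-times formula the candidate local time of $w$ on $[s,t]$ has Fourier transform $\widehat{L_{s,t}}(\xi)=\int_s^t e^{-i\langle\xi,w_r\rangle}\,dr$. A Kolmogorov/Besov-regularity argument in the joint variables $(t,x)$ — in the spirit of \cite{Catellier2016} and the preliminary estimates established earlier — shows that $L$ almost surely exists and lies in $C_T^{\gamma'}\cC^\alpha$ for every $\gamma'\in(\tfrac12,1)$ and $\alpha\in\RR$, provided that for each $\gamma\in(\tfrac12,1)$ and $N\in\NN$ there exist $n\in\NN$ and $C<\infty$ with
\[
\EE\Big[\Big|\int_s^t e^{i\langle\xi,w_r\rangle}\,dr\Big|^{2n}\Big]\le C\,|t-s|^{2n\gamma}\,(1+|\xi|)^{-N},\qquad 0\le s\le t\le T,\ \xi\in\RR^d.
\]
Here one passes from the Fourier-side bound to a bound on the Littlewood--Paley blocks $\Delta_q L_{s,t}$ via Minkowski's integral inequality, a chaining in $x$ and Bernstein estimates, sums over $q$ (which converges once $N$ is large relative to $\alpha,d,n$), and applies Kolmogorov's continuity theorem; the nesting of the scales $\cC^\alpha$ in $\alpha$ and $C^{\gamma'}$ in $\gamma'$ together with a countable intersection then produces one null set valid for all $\gamma',\alpha$. (A polynomial weight, or the compactness of the range of the continuous path $w$, handles the non-compact Fourier support of the blocks.) It thus remains to prove the displayed estimate.

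\textbf{Step 2 (the moment estimate via LND).} Set $J:=\int_s^t e^{i\langle\xi,w_r\rangle}\,dr$. Expanding $|J|^{2n}=J^n\bar J^{\,n}$ and using Fubini and the Gaussianity of $w$,
\[
\EE\big[|J|^{2n}\big]=\int_{[s,t]^{2n}}\exp\!\Big(-\tfrac12\mathrm{Var}\big(\textstyle\sum_{l=1}^{2n}\epsilon_l\langle\xi,w_{r_l}\rangle\big)\Big)\,dr,
\]
with $\epsilon_l=1$ for $l\le n$ and $\epsilon_l=-1$ for $l>n$. Symmetrising over the $(2n)!$ orderings, it suffices to bound the integral over the simplex $\{s<r_1<\dots<r_{2n}<t\}$ for an arbitrary sign pattern $\tilde\epsilon\in\{-1,1\}^{2n}$. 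On this simplex the LND hypothesis enters through successive conditioning on $\cF_{r_{l-1}}$, under which $w_{r_1},\dots,w_{r_{l-1}}$ are measurable: peeling the times off from the largest downward yields, for some index set $A=A(\tilde\epsilon)\subseteq\{1,\dots,2n\}$ with $|A|\ge n$, the bound
\[
\mathrm{Var}\big(\textstyle\sum_{l=1}^{2n}\tilde\epsilon_l\langle\xi,w_{r_l}\rangle\big)\ \ge\ c\,|\xi|^2\sum_{l\in A}(r_l-r_{l-1})^{2\zeta}\qquad(r_0:=s).
\]
Changing variables to the increments $u_l=r_l-r_{l-1}$, each of the $n$ ``good'' indices contributes $\int_0^\infty e^{-\frac c2|\xi|^2u^{2\zeta}}\,du\asymp|\xi|^{-1/\zeta}$, which we interpolate by $\min\big(t-s,\,C|\xi|^{-1/\zeta}\big)\le(t-s)^{\gamma'}\big(C|\xi|^{-1/\zeta}\big)^{1-\gamma'}$, while the remaining $n$ variables contribute at most $(t-s)^n/n!$; hence
\[
\EE\big[|J|^{2n}\big]\ \le\ C_{n,\zeta,\gamma'}\,(t-s)^{n(1+\gamma')}\,(1+|\xi|)^{-n(1-\gamma')/\zeta}
\]
for any $\gamma'\in(0,1)$, which with $\gamma:=(1+\gamma')/2\in(\tfrac12,1)$ is precisely the estimate of Step~1 with $N=n(1-\gamma')/\zeta$. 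The mechanism is now visible: take $\gamma'$ close to $1$ (so $\gamma$ close to $1$; after the Kolmogorov loss $\tfrac1{2n}$ this forces $n$ large), and then take $\zeta\in(0,1)$ small — and it is exactly here that we use that LND holds for \emph{all} $\zeta\in(0,1)$, the surplus over any single fractional Brownian motion — to make $N$ arbitrarily large, i.e.\ to reach arbitrary spatial regularity $\alpha$.

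\textbf{Step 3 (the two examples).} For $\mathbb{B}_t=\sum_k\lambda_k w^{H_k}_t$ with independent fractional Brownian motions and $H_k\downarrow 0$: by independence $\mathrm{cov}(\mathbb{B}_t\mid\cF_s)=\sum_k\lambda_k^2\,\mathrm{cov}(w^{H_k}_t\mid\cF_s)$, and $\mathrm{cov}(w^{H_k}_t\mid\cF_s)=\mathrm{cov}\big(w^{H_k}_t\mid\sigma(w^{H_k}_u:u\le s)\big)\succeq c_{H_k}(t-s)^{2H_k}I$ by the classical one-sided strong local nondeterminism of fractional Brownian motion; given $\zeta\in(0,1)$ choose $k_0$ with $H_{k_0}<\zeta$, so that $(t-s)^{2H_{k_0}}\ge T^{-2(\zeta-H_{k_0})}(t-s)^{2\zeta}$ on $(0,T]$ and hence $z^T\mathrm{cov}(\mathbb{B}_t\mid\cF_s)z\ge c_{H_{k_0}}\lambda_{k_0}^2T^{-2(\zeta-H_{k_0})}(t-s)^{2\zeta}$ for $|z|=1$. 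The log-Brownian motion of Definition~\ref{def:log BM} is handled identically, reading the conditional-covariance lower bound off its defining representation, which is designed to be rougher than every fractional Brownian motion.

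\textbf{Main obstacle.} The delicate point is the variance lower bound inside Step~2. Iterating the one-step conditioning is not automatic: after the first peel the prediction term $\EE[w_{r_l}\mid\cF_{r_{l-1}}]$ produces a coefficient in front of $w_{r_{l-1}}$ that may vanish, and in fact the naive quadratic-form inequality $\mathrm{Var}\big(\sum_l a_l\langle\xi,w_{r_l}\rangle\big)\gtrsim|\xi|^2\sum_l a_l^2(r_l-r_{l-1})^{2\zeta}$ already fails for Brownian motion (take $a_1=-a_2$). One must instead track which increments stay ``active'' along the peeling, in analogy with the Brownian case, where the active set is governed by the returns to $0$ of the partial sums of $\tilde\epsilon$ and always has at least $n$ elements; carrying this book-keeping out from the (weak, one-sided) LND hypothesis — in the minimal form isolated in Remark~\ref{rem:weaker LND} — is the technical heart. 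By comparison, the Besov/Kolmogorov reduction of Step~1 and the verification of the hypothesis in Step~3 are comparatively routine.
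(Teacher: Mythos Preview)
Your overall architecture (moment bound on $\widehat{L_{s,t}}(\xi)$ $\to$ Sobolev/Besov regularity of $L$ via Kolmogorov $\to$ infinite regularization) matches the paper, and Step~3 is essentially identical to Section~\ref{sec: Infinitely regularizing stochastic processes}. The gap is in Step~2, and you identify it yourself without resolving it.

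The variance lower bound $\mathrm{Var}\big(\sum_l \tilde\epsilon_l \langle\xi,w_{r_l}\rangle\big) \ge c\,|\xi|^2 \sum_{l\in A}(r_l-r_{l-1})^{2\zeta}$ with $|A|\ge n$ is not established in your argument and does not follow from one-sided LND for a general Gaussian process. The Brownian bookkeeping works only because $\EE[w_t\mid\cF_s]=w_s$, so after one peel the remainder is again a finite linear combination of the $w_{r_l}$ with explicit coefficients (the partial sums of $\tilde\epsilon$). For a general $\zeta$-LND process, $\EE[w_{r_m}\mid\cF_{r_{m-1}}]$ is an arbitrary $\cF_{r_{m-1}}$-measurable Gaussian vector; it need not lie in $\mathrm{span}(w_{r_1},\dots,w_{r_{m-1}})$, so after the first peel the residual is no longer of a form to which the hypothesis applies. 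Conditioning at $\cF_{r_{m-1}}$ extracts only the single gap $(r_m-r_{m-1})^{2\zeta}$, and the iteration stalls. ``Carrying this book-keeping out from the (weak, one-sided) LND hypothesis'' is asserted, not argued.

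The paper avoids this difficulty entirely by using L\^e's stochastic sewing lemma (Lemma~\ref{stochastic sewing lemma}): set $A_{s,t}:=\int_s^t \EE[e^{i\langle z,w_r\rangle}\mid\cF_s]\,dr$; then $\EE[\delta_u A_{s,t}\mid\cF_s]=0$ by the tower property, and $\|\delta_u A_{s,t}\|_{L^p}\lesssim (1+|z|^2)^{-\lambda'/2}|t-s|^{1-\lambda'\zeta}$ for $\lambda'<\tfrac{1}{2\zeta}$, using LND only at a \emph{single} pair of times. Stochastic sewing then delivers $\|\widehat{\mu_{s,t}}(z)\|_{L^p}\lesssim (1+|z|^2)^{-\lambda'/2}|t-s|^{1-\lambda'\zeta}$ for every $p\ge 2$ --- precisely your target estimate, obtained without any multi-point variance inequality. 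Minkowski in $H^\lambda$ and Kolmogorov (Theorem~\ref{thm: regualritry of avg op}) together with the interpolation of Corollary~\ref{cor:cont in time of LT} then finish. The one-sided conditioning in the LND hypothesis is exactly matched to the one-sided conditioning built into stochastic sewing; that pairing is what makes the argument run where the direct $2n$-th moment expansion does not.
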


\begin{rem}
	As pointed out by the referee, actually the weaker condition	
	\begin{equation*}
    	\inf_{t \in (0,T]}\inf_{s\in [0,t)} \inf_{\substack{z\in \RR^d: \\|z|=1}}  \frac{ z^T \mathrm{cov}(w_t-w_s)z}{(t-s)^{2\zeta}}>0,\qquad \zeta \in (0,1)
	\end{equation*}
	is already sufficient to guarantee infinite regularization. See the discussion in Remark~\ref{rem:weaker LND}.
\end{rem}

The structure of the paper is as follows:
\begin{itemize}[leftmargin=1cm]

 \item In Section \ref{Sec: Loc times and besov spaces} we  provide some background material on local times and Besov spaces, as well as a statement of the stochastic sewing lemma, recently developed by L\^e \cite{le2018}. 

\item Section \ref{sec:Gaussian regualrizers}
 is devoted to the proof of  Theorem \ref{thm: Cond for gaussian regualrizing}.

\item In Section \ref{sec: Infinitely regularizing stochastic processes} we present two infinitely regularizing Gaussian processes. In particular, we will consider a process  called  $p-$log Brownian motion, and show that is infinitely regularizing. We also show that the processes used in \cite{ProskeBanosAmine} is infinitely regularizing. 

\item In Section \ref{sec:regularzing operators} 
we give a path-wise construction of the so called averaging operators, in line with what has been done
in \cite{Catellier2016}. We show that when these operators are constructed from an infinitely regularizing path, then they are in  $C^{\gamma}_T\mathcal{C}^{\alpha}$
for any $\gamma\in\left(0,1\right)$, $\alpha>0$.

\item Finally, in Section \ref{sec:ex a Uni} we prove the existence and uniqueness of solutions to (\ref{eq: ODE perturbation}) and the smoothness of the associated flow.
\end{itemize}

\section{Essentials of local times and Besov spaces}\label{Sec: Loc times and besov spaces}

\subsection{Occupation measures and Local times \label{subsec:Occupation-measures-and}}

The occupation measure of an $\mathbb{R}^{d}$-valued measurable path $w:[0,T]\rightarrow \mathbb{R}^d$
at a time $t\in\left[0,T\right]$ is defined by
\[
\mu_{t}\left(A\right):=\lambda\left\{ s\in\left[0,t\right]|w_{s}\in A\right\}, \qquad \text{for }A\in\mathcal{B}(\mathbb{R}^{d}),
\]
where $\lambda$ is the Lebesgue measure. We interpret $\mu_t(A)$ as ``the amount of time 
$w$ spends in $A$ up to time $t$''. Occupation measures have been an important topic in the theory of stochastic processes during the last fifty years. We refer the interested reader to the comprehensive review paper by
Geman and Horowitz \cite{GemHoro}, and the references therein for an introductory account of occupation measures and local times. 
\begin{defn}
\label{def:Defintion-of-local time}Let $w:[0,T]\rightarrow \mathbb{R}^d$
be a measurable path. Assume that there exists a measurable function $L:\left[0,T\right]\times\mathbb{R}^{d}\rightarrow\mathbb{R}_{+}$
with $L_{0}\left(z\right)=0$ for all $z\in\mathbb{R}^{d}$ and such that
\begin{equation}
\mu_t\left(A\right) =\int_{A}L_{t}\left(z\right)dz,\qquad A \in \mathcal B( \mathbb R^d), t \in [0,T].\label{eq:local time rel}
\end{equation}
 Then we call $L$ the \emph{local time}
of $w$. 
\end{defn}

\begin{rem}
Of course, the local time does not have to exist, and intuitively we interpret its existence and regularity as an irregularity condition for $w$. Later we will see that for sample paths of Gaussian processes this interpretation is in some sense justified. Already in \cite[Section 4]{Berman69} it was observed that the regularity of the local time is connected to the irregular behavior of $w$.  In general it has been a long standing  open problem to establish a clear link between regularity properties of the local times and irregularity measures such as ``true roughness'' \cite{Hairer2013Regularity, Friz2013}; see Sections~10 and~11 of \cite{GemHoro} for some partial results in that direction. However, in a very recent preprint, the authors of \cite{GaleatiGubinelli2020} are able to establish one direction of this connection: In Corollary 68 they show that any function with a local time of appropriate (space-time) regularity must be ``H\"older rough'' in a certain sense. The converse direction, i.e. if any H\"older rough function has a nice local time, is still open to the best of our knowledge.
Note that in $d=1$ even $w\in C^\infty$ can have a local time: For $a,b \in \mathbb R$ with $b > 0$ the path $w_t = a + b t$ has the local time $L_t(z) = b^{-1} \mathds{1}_{(a,a+bt]}(z)$. However, if a Lipschitz continuous $w \in C([0,T], \mathbb R)$ has a local time $L$, then $L_t$ has at least two discontinuities: If $z_0 = w_{t_0} = \max\{w_s: s \in [0,t]\}$, then $L_t(z) = 0$ for $z > z_0$, and with the Lipschitz constant $K$ of $w$ we get $w_s \in [z_0-\delta,z_0]$ for all $s \in [t_0-\delta/K,t_0+\delta/K] \cap [0,t]$. For $\delta>0$ small enough we must have $t_0 - \delta/K \ge 0$ or $t_0 + \delta/K \le t$, and therefore
\[
	\int_{z_0-\delta}^{z_0} L_t(z) dz \ge \frac{\delta}{K}.
\]
By the fundamental theorem of calculus this is impossible if $L_t$ is continuous with $L_t(z_0) = 0$. Similarly $L_t$ must have a discontinuity at the minimum of $w|_{[0,t]}$. In other words the local time can only be continuous if $w$ is more irregular than Lipschitz continuous. A similar argument shows that the set of paths with local time $L_T \in C^\alpha(\mathbb R)$ has empty intersection with $C^\beta([0,T],\mathbb R)$ for all $\beta>(\alpha+1)^{-1}$.
\end{rem}

\subsection{Essentials of Besov spaces}\label{beso spaces}

Here we recall some basic properties of Besov
spaces. For a more extensive introduction we refer to \cite{BahCheDan}. We will denote by $\mathscr{S}$ resp.
$\mathscr{S}^{\prime}$ the space of Schwartz functions on $\mathbb R^d$ resp. its dual, the space of tempered distributions. For $f\in\mathscr{S}^{\prime}$
we denote the Fourier transform by $\hat{f}=\mathscr{F}\left(f\right) = \int_{\mathbb R^d}e^{-i x \cdot} f(x)  dx$, where the integral notation is formal, with inverse $\mathscr{F}^{-1} f = (2\pi)^{-d}\int_{\mathbb R^d} e^{i z \cdot} \hat f(z)dz$.
\begin{defn}
\label{def:Dyadic partition of unity}Let $\chi,\rho\in C^{\infty}(\mathbb{R}^{d},\mathbb{R})$
be two radial functions such that $\chi$ is supported on a ball $\mathcal{B}=\left\{ |x|\leq c\right\} $
and $\rho$ is supported on an annulus $\mathcal{A}=\left\{ a\leq|x|\leq b\right\} $
for $a,b,c>0$, such that 
\begin{align*}
\chi+\sum_{j\geq0}\rho\left(2^{-j}\cdot\right) & \equiv1,\\
{\rm supp}\left(\chi\right)\cap{\rm supp}\left(\rho\left(2^{-j}\cdot\right)\right) & =\emptyset,\,\,\,\forall j\ge1,\\
{\rm supp}\left(\rho\left(2^{-j}\cdot\right)\right)\cap{\rm supp}\left(\rho\left(2^{-i}\cdot\right)\right) & =\emptyset,\,\,\,\forall|i-j|\geq1.
\end{align*}
Then we call the pair $\left(\chi,\rho\right)$ a \emph{dyadic partition
of unity}. Furthermore, we write $\rho_{j}=\rho(2^{-j} \cdot)$
for $j\geq 0$ and $\rho_{-1}=\chi$, as well as $K_{j}=\mathscr{F}^{-1}\rho_{j}$.
\end{defn}

The existence of a partition of unity is shown for example in \cite{BahCheDan}. We fix a partition of unity $(\chi,\rho)$ for the rest of the paper.

\begin{defn}
\label{def:Paley littlewood block}For $f\in\mathcal{\mathscr{S}}^{\prime}$
we define its Littlewood-Paley blocks by
\[
\Delta_{j}f=\mathscr{F}^{-1}(\rho_{j}\hat{f}) = K_j \ast f.
\]
It follows that $f=\sum_{j\geq-1}\Delta_{j}f$ with convergence in $\mathscr S'$.
\end{defn}

In the following we write
\begin{equation}
	\langle x \rangle := (1+|x|^2)^{1/2}.	
\end{equation}

\begin{defn}
\label{def:Besov space}For any $\alpha,\kappa\in\mathbb{R}$ and $p,q\in\left[1,\infty\right]$,
the \emph{weighted Besov space} $B_{p,q}^{\alpha}(\langle x \rangle^{\kappa})$ is
\[
B_{p,q}^{\alpha}(\langle x \rangle^\kappa):=\left\{ f\in\mathscr{S}^{\prime}\left|\|f\|_{B_{p,q}^{\alpha}(\langle x \rangle^\kappa)}=\left(\sum_{j\geq-1}\left(2^{j\alpha}\| \langle x \rangle^\kappa \Delta_{j}f \|_{L^{p}}\right)^{q}\right)^{\frac{1}{q}}<\infty\right.\right\},
\]
with the usual interpretation as $\ell^\infty$ norm if $q = \infty$. If $\kappa=0$, we simply write $B^\alpha_{p,q}$. Furthermore, we denote $\mathcal{C}^{\alpha}(\langle x \rangle^\kappa)=B_{\infty,\infty}^{\alpha}(\langle x \rangle^\kappa)$
and $\mathcal{C}_{p}^{\alpha}(\langle x \rangle^\kappa)=B_{p,\infty}^{\alpha}(\langle x \rangle^\kappa)$.
\end{defn}

\begin{rem}
	By Theorem~6.5 of \cite{Triebel2006} we have
	\[
		\|f\|_{B^\alpha_{p,q}(\langle x \rangle^\kappa)} \simeq \|f \langle x \rangle^\kappa \|_{B^\alpha_{p,q}}.
	\]
\end{rem}

\begin{rem}
For $\alpha\in \mathbb R_+ \setminus \mathbb N$ the space $\mathcal{C}^{\alpha}(\langle x \rangle^\kappa)$
corresponds to a classical weighted H\"older space, see e.g. \cite[Lemma 2.1.23]{Martin2018Refinements}. For all $\alpha \in \mathbb R$ the Besov space $B_{2,2}^{\alpha}$ corresponds to the inhomogeneous
Sobolev space $H^{\alpha}$ defined by 
\[
H^{\alpha}:=\left\{ f\in\mathscr{S}^{\prime}\left|\|f\|_{H^{\alpha}}=\|\left(1+|\cdot|\right)^{\alpha}\hat{f}\|_{L^{2}} < \infty \right.\right\} .
\]
\end{rem}

\begin{lem}[Besov embedding, see \cite{BahCheDan}, Proposition 2.71]\label{lem:(Besov-embedding)} Let $1\leq p_{1}\leq p_{2}\leq\infty$
and $1\leq q_{1}\leq q_{2}\leq\infty,$ and let $\kappa\in\mathbb{R}$.
Then $B_{p_{1},q_{1}}^{\kappa}$ is continuously embedded
into $B_{p_{2},q_{2}}^{\kappa-d\left(\frac{1}{p_{1}}-\frac{1}{p_{2}}\right)}$. 
\end{lem}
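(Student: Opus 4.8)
\emph{Plan.} The final statement is the Besov embedding $B^{\kappa}_{p_1,q_1} \hookrightarrow B^{\kappa - d(1/p_1 - 1/p_2)}_{p_2,q_2}$, and the proof reduces to two classical ingredients applied blockwise to the Littlewood--Paley decomposition. First I would recall that each block $\Delta_j f$ has Fourier support in a ball of radius $\lesssim 2^j$ (for $j \ge 0$; in a fixed ball for $j = -1$), so I would invoke the Bernstein-type inequality: for $1 \le p_1 \le p_2 \le \infty$ and $g \in \mathscr S'$ with $\widehat g$ supported in $\{|\xi| \le \lambda\}$ one has $\|g\|_{L^{p_2}} \lesssim \lambda^{d(1/p_1 - 1/p_2)} \|g\|_{L^{p_1}}$. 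This is the only analytic input and I would either cite it from \cite{BahCheDan} or sketch it via writing $g = \tilde K_\lambda \ast g$ with $\tilde K_\lambda$ a suitably rescaled smooth cutoff function whose Fourier transform is $1$ on the support of $\widehat g$, then using Young's convolution inequality $\|g\|_{L^{p_2}} \le \|\tilde K_\lambda\|_{L^r}\|g\|_{L^{p_1}}$ with $1 + 1/p_2 = 1/r + 1/p_1$ and scaling $\|\tilde K_\lambda\|_{L^r} = \lambda^{d(1 - 1/r)}\|\tilde K_1\|_{L^r} = \lambda^{d(1/p_1 - 1/p_2)}\|\tilde K_1\|_{L^r}$.

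Next I would apply this with $g = \Delta_j f$ and $\lambda \simeq 2^j$, obtaining $\|\Delta_j f\|_{L^{p_2}} \lesssim 2^{jd(1/p_1 - 1/p_2)}\|\Delta_j f\|_{L^{p_1}}$ uniformly in $j \ge -1$ (absorbing the $j = -1$ case into the constant). Multiplying by the weight $2^{j(\kappa - d(1/p_1 - 1/p_2))}$ gives
\[
2^{j(\kappa - d(1/p_1 - 1/p_2))}\|\Delta_j f\|_{L^{p_2}} \lesssim 2^{j\kappa}\|\Delta_j f\|_{L^{p_1}},
\]
so that the sequence $(2^{j(\kappa - d(1/p_1-1/p_2))}\|\Delta_j f\|_{L^{p_2}})_{j \ge -1}$ is dominated by $(2^{j\kappa}\|\Delta_j f\|_{L^{p_1}})_{j \ge -1}$ pointwise in $j$. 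Taking the $\ell^{q_2}$ norm and using the elementary embedding $\ell^{q_1} \hookrightarrow \ell^{q_2}$ for $q_1 \le q_2$ (with $\|\cdot\|_{\ell^{q_2}} \le \|\cdot\|_{\ell^{q_1}}$), I conclude
\[
\|f\|_{B^{\kappa - d(1/p_1 - 1/p_2)}_{p_2,q_2}} \lesssim \big\| (2^{j\kappa}\|\Delta_j f\|_{L^{p_1}})_j \big\|_{\ell^{q_2}} \le \big\| (2^{j\kappa}\|\Delta_j f\|_{L^{p_1}})_j \big\|_{\ell^{q_1}} = \|f\|_{B^{\kappa}_{p_1,q_1}},
\]
which is exactly the claimed continuous embedding.

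\emph{Main obstacle.} There is no real obstacle here: the statement is standard and the only nontrivial step is the Bernstein inequality, which is itself routine and available in the cited reference \cite{BahCheDan}. The only points requiring a little care are (i) making sure the implicit constants in the Bernstein estimate are uniform over all blocks $j \ge -1$ — this is fine because the annuli $2^j \mathcal A$ are exact dilates of a fixed annulus and $\chi$ is supported in a fixed ball, so only two ``shapes'' of kernel appear up to scaling; and (ii) the bookkeeping of the weighted $\ell^q$ norms, which is immediate. Since the paper explicitly attributes the result to Proposition~2.71 of \cite{BahCheDan}, in practice one would simply cite it; I include the short argument above for completeness.
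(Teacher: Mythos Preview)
Your argument is correct and is the standard proof via Bernstein's inequality and the $\ell^{q_1}\hookrightarrow\ell^{q_2}$ embedding. The paper itself gives no proof of this lemma---it simply states it with attribution to \cite[Proposition~2.71]{BahCheDan}---so your sketch is precisely the argument one finds in that reference.
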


Recall from Definition \ref{def:infinitely-regularizing} that a path is infinitely regularizing if its local time is in $C^\gamma_T \mathcal C^\alpha$ for all $\gamma \in (0,1)$ and all $\alpha \in \mathbb R$. By an interpolation argument this follows from a softer criterion:

\begin{cor}\label{cor:cont in time of LT}
	Let $w \in C([0,T], \mathbb R^d)$ with associated local time $L$ such that
	\[
		\sup_{t \in [0,T]} \| L_t \|_{H^\alpha} < \infty
	\]
	for any $\alpha > 0$. Then $w$ is infinitely regularizing.
\end{cor}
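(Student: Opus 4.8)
The plan is to bootstrap spatial regularity of $L_t$ (which we are given: $L_t \in H^\alpha$ for all $\alpha > 0$, uniformly in $t$) into joint space-time Hölder regularity, i.e. to show $L \in C^\gamma_T \mathcal C^\alpha$ for every $\gamma \in (\tfrac12,1)$ and every $\alpha \in \mathbb R$. Since $H^\alpha = B^\alpha_{2,2} \hookrightarrow \mathcal C^{\alpha - d/2}$ by the Besov embedding (Lemma~\ref{lem:(Besov-embedding)}), the hypothesis already gives $\sup_{t} \|L_t\|_{\mathcal C^\beta} < \infty$ for every $\beta \in \mathbb R$, so it only remains to upgrade the (at this stage merely bounded) map $t \mapsto L_t$ to a Hölder-continuous one with values in $\mathcal C^\alpha$. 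The key observation is that $L$ is \emph{a priori} $1$-Lipschitz in time in a very weak topology: from the occupation-measure identity \eqref{eq:local time rel}, for any $0 \le s \le t \le T$ and any test function $\varphi$ one has $\langle L_{s,t}, \varphi \rangle = \int_s^t \varphi(w_r)\,dr$, hence $|\langle L_{s,t}, \varphi\rangle| \le (t-s)\|\varphi\|_{L^\infty}$. This says $\|L_{s,t}\|_{\mathcal C^{-d - \varepsilon}} \lesssim |t-s|$ (duality of $\mathcal C^{-d-\varepsilon}$ with a space containing bounded functions, e.g. via $\|L_{s,t}\|_{\mathcal C^{-s}} \lesssim \|L_{s,t}\|_{(\mathcal C^s_*)'}$ for $s > d$; more simply, $\|\Delta_j L_{s,t}\|_{L^\infty} = \|K_j * L_{s,t}\|_{L^\infty} \le (t-s)\|K_j\|_{L^1} \lesssim (t-s) 2^{jd}$), so $L$ is Lipschitz in time with values in a space of very negative regularity.

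Now I would interpolate. Fix the target pair $(\gamma, \alpha)$ with $\gamma \in (\tfrac12,1)$ and $\alpha \in \mathbb R$. Pick $\beta > \alpha$ large. We have the two bounds
\[
\|L_{s,t}\|_{\mathcal C^{\beta}} \le \sup_{r}\|L_r\|_{\mathcal C^\beta} + \sup_r \|L_r\|_{\mathcal C^\beta} \lesssim 1, \qquad \|L_{s,t}\|_{\mathcal C^{-N}} \lesssim |t-s|
\]
for a suitable large $N > 0$. The standard Besov interpolation inequality $\|f\|_{\mathcal C^{\theta \beta + (1-\theta)(-N)}} \lesssim \|f\|_{\mathcal C^\beta}^\theta \|f\|_{\mathcal C^{-N}}^{1-\theta}$ then gives
\[
\|L_{s,t}\|_{\mathcal C^{\alpha}} \lesssim |t-s|^{1-\theta}, \qquad \text{where } \theta\beta - (1-\theta)N = \alpha.
\]
As $\beta, N \to \infty$ we can make $\theta$ as close to $0$ as we like for fixed $\alpha$, hence $1-\theta$ as close to $1$ as we like; in particular $1 - \theta > \gamma$ for any prescribed $\gamma < 1$. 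This yields $\|L_{s,t}\|_{\mathcal C^\alpha} \lesssim |t-s|^\gamma$, i.e. $L \in C^\gamma_T \mathcal C^\alpha$, and since $\gamma$ and $\alpha$ were arbitrary (with $\gamma < 1$, which covers the required range $(\tfrac12,1)$), $w$ is infinitely regularizing by Definition~\ref{def:infinitely-regularizing}.

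The main technical point to get right — and the only place where a little care is needed — is the a priori Lipschitz-in-time estimate in a negative Besov norm: one must verify $\|L_{s,t}\|_{\mathcal C^{-N}} \lesssim |t-s|$ cleanly. I expect to do this directly at the level of Littlewood–Paley blocks: $\Delta_j L_{s,t} = K_j * L_{s,t}$ and $(K_j * L_{s,t})(x) = \langle L_{s,t}, K_j(x - \cdot)\rangle = \int_s^t K_j(x - w_r)\,dr$ by \eqref{eq:local time rel}, so $\|\Delta_j L_{s,t}\|_{L^\infty} \le (t-s)\|K_j\|_{L^1} = (t-s)\|K_0\|_{L^1}$ for $j \ge 0$ (and $= (t-s)\|\mathscr F^{-1}\chi\|_{L^1}$ for $j = -1$), using $\|K_j\|_{L^1} = \|K_0\|_{L^1}$ by scaling. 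Hence $2^{-jN}\|\Delta_j L_{s,t}\|_{L^\infty} \le 2^{-jN}(t-s)\|K_0\|_{L^1}$, which is bounded uniformly in $j \ge -1$ for any $N \ge 0$, giving $\|L_{s,t}\|_{\mathcal C^{-N}} \le C (t-s)$ — in fact $\|L_{s,t}\|_{\mathcal C^0} \le C(t-s)$ already, but the negative-regularity version is what interpolates usefully against $\mathcal C^\beta$. Everything else is the routine interpolation inequality for Besov spaces, which holds because $\mathcal C^s = B^s_{\infty,\infty}$ is a real-interpolation (indeed logarithmic-convexity-of-norms) scale.
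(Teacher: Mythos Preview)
Your approach is correct and essentially identical to the paper's: both establish an a priori Lipschitz-in-time bound for $L$ in a negative Besov norm directly from the occupation-time formula, and then interpolate against the uniform-in-$t$ high-regularity bound to obtain $L\in C^\gamma_T\mathcal C^\alpha$ for all $\gamma<1$ and all $\alpha$. The paper reaches $\|L_{s,t}\|_{\mathcal C^{-d}}\lesssim|t-s|$ via $\|\mu_{s,t}\|_{B^0_{1,\infty}}\lesssim\mu_{s,t}(\RR^d)=t-s$ and the embedding $B^0_{1,\infty}\hookrightarrow\mathcal C^{-d}$, then writes out the interpolation explicitly at the level of Littlewood--Paley blocks; you do the Littlewood--Paley estimate directly and cite the interpolation inequality.

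One slip to correct: from $(K_j*L_{s,t})(x)=\int_s^t K_j(x-w_r)\,dr$ the pointwise bound gives $\|\Delta_j L_{s,t}\|_{L^\infty}\le(t-s)\|K_j\|_{L^\infty}\simeq(t-s)2^{jd}$, not $(t-s)\|K_j\|_{L^1}$ (equivalently, Young's inequality pairs $\|L_{s,t}\|_{L^1}=t-s$ with $\|K_j\|_{L^\infty}$). Hence the side claim $\|L_{s,t}\|_{\mathcal C^0}\lesssim t-s$ is false; the correct conclusion is $\|L_{s,t}\|_{\mathcal C^{-d}}\lesssim t-s$, which you also state and which is all your interpolation needs.
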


\begin{proof}
	We get the necessary time regularity by bounding $L_{s,t}$ in a Besov space that contains measures, and then we use an interpolation argument: for any finite positive measure $\mu$ we have by \cite[Proposition~2.76]{BahCheDan}
	\[
		\|\mu\|_{B^{0}_{1,\infty}} \lesssim \sup_{\substack{\varphi \in \mathscr S,\\ \|\varphi\|_{B^0_{\infty,1}} \le 1}} \langle \mu, \varphi\rangle \le \mu(\mathbb R^d) \times \sup_{\substack{\varphi \in \mathscr S, \\ \|\varphi\|_{B^0_{\infty,1}} \le 1}} \| \varphi\|_{L^\infty} \lesssim \mu(\mathbb R^d).
	\]
	Since the occupation measure is given by $\mu_t(\cdot)=\int_0 ^t \delta(\cdot-w_r)dr$,	with the Besov embedding result from Lemma \ref{lem:(Besov-embedding)} we get that 
	\[
		\|L_{s,t}\|_{\mathcal C^{-d}} \lesssim \|L_{s,t}\|_{B^0_{1,\infty}} \le \int_s^t \|\delta(\cdot - w_r)\|_{B^0_{1,\infty}} dr \lesssim |t-s|,
	\]
	which implies in particular that $L \in C^1_T\mathcal{C}^{-d}(\mathbb{R}^d)$. 
     Now we get for  $\alpha > 0$ and  $\gamma \in (0,1)$:
     \begin{equation*}
\begin{aligned}
		\|L\|_{C^\gamma_T \mathcal{C}^\alpha} &= \sup_{0\le s < t \le T} \sup_{j \ge -1} 2^{j\alpha} \frac{\| \Delta_j L_{s,t} \|_{L^\infty}}{|t-s|^\gamma}
		\\
		&\le \left( \sup_{s,t,j} 2^{-jd} \frac{\| \Delta_j L_{s,t} \|_{L^\infty}}{|t-s|} \right)^\gamma \left( \sup_{s,t,j} 2^{j \frac{\alpha+\gamma d}{1-\gamma}} \| \Delta_j L_{s,t} \|_{L^\infty} \right)^{1-\gamma}
		\\
		& \le \|L\|_{C^1_T\mathcal{C}^{-d}}^\gamma \sup_{s,t} \|L_{s,t}\|^{1-\gamma}_{\mathcal{C}^\kappa},
\end{aligned}
\end{equation*}
for $\kappa:={\frac{\alpha+\gamma d}{1-\gamma}}$. 
 Since  $\|L_{s,t}\|_{\mathcal C^{\kappa}} \lesssim \| L_{s,t} \|_{H^{\kappa + d/2}} \lesssim 1$ by assumption, our claim follows. 
\end{proof}

\subsection{The stochastic sewing lemma}

To derive the space-time regularity of local times, we will apply the  stochastic sewing lemma \cite{le2018} recently developed by Khoa L\^{e}. We therefore recite here the statement of this lemma, and refer the reader to \cite{le2018} for the proof and a discussion of this result. We will make use of $n$- simplices defined by 
\begin{equation}\label{simplex}
    \Delta_n^T:=\{(t_1,\ldots,t_n)\in [0,T]^n|0\leq  t_1\leq \ldots\leq t_n\leq T\}. 
\end{equation}

\begin{lem}[\cite{le2018}, Theorem~2.1]\label{stochastic sewing lemma}
Let $(\Omega,\mathcal{F},\{\mathcal{F}_t\}_{t\in[0,T]},\mathbb{P})$ be a complete filtered probability space. Let $p\geq 2$ and let $A:\Delta_2^T\rightarrow \mathbb{R}^d$ be a stochastic process such that $A_{s,s}=0$,  $A_{s,t}$ is $\mathcal{F}_t$ measurable, and $(s,t)\mapsto A_{s,t}$ is right-continuous from  $\Delta_2^T$ into $L^p(\Omega)$. Set $\delta_u A_{s,t}:=A_{s,t}-A_{s,u}-A_{u,t}$ for $(s,u,t)\in \Delta_3^T$, and assume that there exists constants $\beta>1$, $\kappa>\frac{1}{2}$, and $C_1,C_2>0$ such that 
\begin{equation}\label{eq:integrand cond}
\begin{aligned}
        \|\mathbb{E}\left[\delta_u A_{s,t}|\mathcal{F}_s\right]\|_{L^p(\Omega)} &\leq K_1 |t-s|^\beta,
        \\
        \|\delta_u A_{s,t}\|_{L^p(\Omega)} &\leq K_2 |t-s|^\kappa.       
\end{aligned}
\end{equation}
Then there exists a unique (up to modifications) $\{\mathcal{F}_t\}$-adapted stochastic process $\mathcal{A}$ such that the following properties are satisfied: 
\begin{itemize}[leftmargin=1cm]
    \item[\rm (i)] $\mathcal{A}:[0,T]\rightarrow L^p(\Omega)$ is right continuous, and $\mathcal{A}_0=0$. 
    \item[\rm (ii)] There exist constants $C_1,C_2>0$ such that for $\cA_{s,t}=\cA_t-\cA_s$:
    \begin{equation}\label{bounds on stochastic integral}
        \begin{aligned}
             \|\mathcal{A}_{s,t}-A_{s,t}\|_{L^p(\Omega)}&\leq  C_1 K_1 |t-s|^\beta+ C_2 K_2 |t-s|^\kappa,
             \\
             \|\EE\left[\mathcal{A}_{s,t}-A_{s,t}|\cF_s\right]\|_{L^p(\Omega)}& \leq C_1 K_1 |t-s|^\beta.
        \end{aligned}
    \end{equation}
\end{itemize}
Furthermore, for all $(s,t)\in \Delta_2^T$ and for any partition $\cP$ of $[s,t]$, define
\begin{equation}
    A^{\cP}_{s,t}:=\sum_{[u,v]\in \cP}A_{u,v}. 
\end{equation}
Then $A^{\cP}_{s,t}$ converge to $\cA_{s,t}$ in $L^p(\Omega)$ as the mesh size $|\cP|\rightarrow 0$. 
\end{lem}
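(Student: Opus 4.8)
The statement to prove is the stochastic sewing lemma itself (Lemma~\ref{stochastic sewing lemma}), so the plan is to reconstruct the proof from L\^e's paper. The construction mimics the classical (deterministic) sewing lemma of Gubinelli, but the Chen-type defect $\delta_u A_{s,t}$ is only controlled in the two norms in \eqref{eq:integrand cond}: a $|t-s|^\kappa$ bound in $L^p$ with $\kappa>\tfrac12$, and a much stronger $|t-s|^\beta$ bound with $\beta>1$ \emph{after conditioning on $\mathcal F_s$}. The key probabilistic input that replaces the deterministic telescoping estimate is the conditional Burkholder--Davis--Gundy / Doob-type inequality: if $(M_k)$ is a sequence of martingale increments adapted to a filtration, then $\|\sum_k M_k\|_{L^p} \lesssim_p \big\| \big(\sum_k \|M_k\|_{L^p(\cdot|\mathcal F_{t_{k}})}^2\big)^{1/2}\big\|_{L^p}$, i.e. conditional $L^p$-norms add in $\ell^2$ rather than in $\ell^1$. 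This is what allows a summable-in-$\ell^2$ exponent $\kappa>\tfrac12$ (rather than $\beta>1$) to suffice for the "martingale part" of the Riemann-type sums.

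\textbf{Key steps, in order.} First I would fix $(s,t)\in\Delta_2^T$ and for the dyadic partition $\cP^n = \{ s = u_0^n < u_1^n < \dots < u_{2^n}^n = t\}$ with $u_i^n = s + i 2^{-n}(t-s)$ set $A^n_{s,t} := A^{\cP^n}_{s,t} = \sum_i A_{u_i^n, u_{i+1}^n}$. The difference of consecutive dyadic sums is $A^{n+1}_{s,t} - A^{n}_{s,t} = -\sum_{i} \delta_{m_i} A_{u_i^n,u_{i+1}^n}$, where $m_i$ is the midpoint. I decompose each summand as $\big(\delta_{m_i}A_{u_i^n,u_{i+1}^n} - \EE[\delta_{m_i}A_{u_i^n,u_{i+1}^n}|\mathcal F_{u_i^n}]\big) + \EE[\delta_{m_i}A_{u_i^n,u_{i+1}^n}|\mathcal F_{u_i^n}]$. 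The first terms form martingale increments w.r.t. $(\mathcal F_{u_i^n})_i$, so by the conditional BDG inequality their sum has $L^p$-norm $\lesssim (\sum_i (K_2 (2^{-n}|t-s|)^\kappa)^2)^{1/2} = K_2 |t-s|^\kappa 2^{-n(\kappa - 1/2)}$, which is summable in $n$ since $\kappa>\tfrac12$. The second terms are bounded deterministically in $L^p$ by $\sum_i K_1 (2^{-n}|t-s|)^\beta = K_1 |t-s|^\beta 2^{-n(\beta-1)}$, summable since $\beta>1$. Hence $(A^n_{s,t})_n$ is Cauchy in $L^p(\Omega)$; call the limit $\tilde A_{s,t}$, and telescoping gives $\|\tilde A_{s,t} - A_{s,t}\|_{L^p} \le C_1 K_1|t-s|^\beta + C_2 K_2 |t-s|^\kappa$, with $C_i$ depending only on $p,\beta,\kappa$. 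Running the same estimate after conditioning on $\mathcal F_s$ — here the martingale part has conditional expectation zero — yields the sharper $\|\EE[\tilde A_{s,t} - A_{s,t}|\mathcal F_s]\|_{L^p} \le C_1 K_1 |t-s|^\beta$.

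\textbf{Additivity and the process $\cA$.} Next I would show $\tilde A$ is additive: $\tilde A_{s,t} = \tilde A_{s,u} + \tilde A_{u,t}$ for $s\le u\le t$. This follows by taking common refinements of the dyadic partitions of $[s,u]$ and $[u,t]$ and of $[s,t]$, passing to the limit, and using the uniform bounds above together with $\kappa>\tfrac12>0$, $\beta>1>0$ to kill the mismatch terms; the argument is the standard one-sided refinement estimate but with $L^p$ replacing sup-norm. Given additivity, $\cA_t := \tilde A_{0,t}$ defines a process with $\cA_0 = 0$ and $\cA_{s,t} = \tilde A_{s,t}$, which is $\mathcal F_t$-adapted (as an $L^p$-limit of $\mathcal F_t$-measurable sums) and inherits right-continuity in $t$ from right-continuity of $A$ plus the modulus-of-continuity bound \eqref{bounds on stochastic integral}. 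Uniqueness: if $\cA, \cA'$ both satisfy (i)–(ii), then $D_{s,t} := \cA_{s,t} - \cA'_{s,t}$ is additive, adapted, satisfies $\|D_{s,t}\|_{L^p}\lesssim |t-s|^{\kappa\wedge\beta}$ and $\|\EE[D_{s,t}|\mathcal F_s]\|_{L^p}\lesssim|t-s|^\beta$; applying the conditional-BDG estimate to the dyadic decomposition of $D$ itself (whose defect $\delta_u D \equiv 0$) forces $\|D_{s,t}\|_{L^p} \le C 2^{-n(\kappa-1/2)}|t-s|^{\kappa}$ for every $n$, hence $D\equiv 0$. Finally, for a general partition $\cP$ of $[s,t]$ with mesh $|\cP|\to 0$, the bound $\|A^\cP_{s,t} - \cA_{s,t}\|_{L^p} = \|\sum_{[u,v]\in\cP}(A_{u,v} - \cA_{u,v})\|_{L^p}$ is handled by the same conditional-BDG-plus-deterministic split applied to the increments $A_{u,v}-\cA_{u,v}$, giving convergence to $0$ at rate controlled by $|\cP|^{\kappa - 1/2}$.

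\textbf{Main obstacle.} The delicate point — and the genuinely new ingredient over the deterministic sewing lemma — is the correct use of the \emph{conditional} BDG/Doob inequality so that the hypothesis $\kappa>\tfrac12$ (not $\kappa>1$) suffices: one must carefully track that the conditional $L^p$-norms $\|\delta_{m_i}A_{u_i,u_{i+1}} - \EE[\,\cdot\,|\mathcal F_{u_i}]\|_{L^p(\Omega|\mathcal F_{u_i})}$ are bounded by $K_2(2^{-n}|t-s|)^\kappa$ \emph{pointwise} (which follows from the second line of \eqref{eq:integrand cond}, since conditioning is an $L^p$-contraction), and then that these square-sum rather than sum. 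Getting the filtration indices right in the martingale decomposition, and ensuring measurability/right-continuity are preserved under the $L^p$-limits, are the remaining technical hurdles; everything else parallels the classical argument.
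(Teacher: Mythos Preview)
The paper does not prove this lemma at all: it is stated as a citation of \cite[Theorem~2.1]{le2018}, with the explicit remark ``we refer the reader to \cite{le2018} for the proof and a discussion of this result.'' So there is no proof in the paper to compare against; you have instead sketched L\^e's original argument.

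Your sketch is essentially correct and follows L\^e's approach faithfully: dyadic refinement, splitting $\delta_{m_i}A$ into a martingale-difference part and a predictable part, BDG plus Minkowski (using $p\ge 2$) to get the $\ell^2$-summation on the martingale part, and a straight $\ell^1$-sum on the predictable part. One small imprecision: you write that the conditional $L^p$-norms $\|\delta_{m_i}A - \EE[\delta_{m_i}A|\cF_{u_i}]\|_{L^p(\Omega|\cF_{u_i})}$ are bounded by $K_2(2^{-n}|t-s|)^\kappa$ \emph{pointwise}. The hypothesis \eqref{eq:integrand cond} only gives an unconditional $L^p$ bound, not a pointwise conditional one; but this is harmless, because the inequality you actually need is $\big\|\sum_i Z_i\big\|_{L^p} \lesssim_p \big(\sum_i \|Z_i\|_{L^p}^2\big)^{1/2}$ for martingale differences $(Z_i)$ and $p\ge 2$, which follows from BDG and Minkowski in $L^{p/2}$ and requires only the unconditional $L^p$-norms of the $Z_i$. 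With that correction the argument goes through.
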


\section{Regularity of local times associated to  Gaussian paths}\label{sec:Gaussian regualrizers}

Here we study the space-time regularity of the local times of Gaussian processes.
Although there are well known results for the spatial regularity of the local time $L_t$ of Gaussian processes at fixed times, or regularity in time for fixed spatial variable  (e.g. \cite{GemHoro}), it seems more difficult to find results that quantify the  joint space-time regularity (see however \cite{Catellier2016} for results about the time regularity of the local time of fractional Brownian motion in certain Fourier-Lebesgue spaces, and also~\cite{Xiao1997} for some results in that direction). We therefore present a general criterion for centred Gaussian processes to be infinitely regularizing.

\begin{defn}\label{def: Gaussian}

A square-integrable Gaussian process $w:\Omega \times [0,T] \to \RR^d$ is called \emph{$\zeta$-locally non-deterministic ($\zeta$-LND)} if
\begin{equation}\label{eq:LND condition}
    \inf_{t \in (0,T]}\inf_{s\in [0,t)} \inf_{\substack{z\in \RR^d: \\|z|=1}}  \frac{ z^T \mathrm{cov}(w_t|\cF_s)z}{(t-s)^{2\zeta}}>0,
\end{equation}
where $\mathrm{cov}(w_t|\cF_s) := \EE[(w_t - \EE[w_t|\cF_s])(w_t - \EE[w_t|\cF_s])^T|\cF_s]$.

\end{defn}

\begin{rem}\label{rem:deterministic cond variance}
 Note that any stochastic processes $w$  can be decomposed into two parts, namely 
\begin{equation*}
w_r=\EE[w_r|\cF_s]+\left(w_r-\EE[w_r|\cF_s]\right).
\end{equation*}
For a Gaussian process $w$, the components  $\EE[w_r|\cF_s]$ and $\left(w_r-\EE[w_r|\cF_s]\right)$ are two Gaussian random variables such that $\left(w_r-\EE[w_r|\cF_s]\right)$ is independent of $\mathcal F_s$, see e.g. \cite[Theorem 3.10.1]{Bogachev1998}. This implies that the conditional covariance $\mathrm{cov}\left(w_r\right|\cF_s)$ is deterministic, due to the fact that  $\mathrm{cov}(w_r|\cF_s) = \mathrm{cov}\left(w_r-\EE[w_r|\cF_s]\right|\cF_s)$ and $w_r-\EE[w_r|\cF_s]$ is independent of $\mathcal F_s$.  
\end{rem}

\begin{rem}
There exists several different definitions of the concept of local non-determinism of stochastic process, e.g. \cite{Berman1969, Pitt78, Xiao2006}. The condition in Definition~\ref{def: Gaussian} is related to the strong local $\phi$ non-determinism  proposed by Cuzick and DuPreez \cite{CuzikDuPerez1982}, where $\phi(r)=r^\gamma$; the only (important) difference is that we only condition on the past, while in \cite{CuzikDuPerez1982} also information about the future is taken into account. This concept is also discussed in \cite{Xiao2006}. 
\end{rem}

The next theorem shows that if $\zeta\in [0,\frac{1}{d})$, then $\zeta$-locally non-deterministic centered Gaussian processes have jointly H\"older-Sobolev continuous local times.

\begin{thm}\label{thm: regualritry of avg op}
Let $\zeta\in [0,\frac{1}{d})$ and let $w:\Omega \times [0,T]\rightarrow \mathbb{R}^d$ be a continuous centered Gaussian  process which is $\zeta$-LND. 
 Then there exists a null set $\mathcal{N} \subset \Omega$ 
 such that  for all $\omega\in \cN^c$ the function $w(\omega)$ has a local time $L(\omega)$, and for all $\lambda <\frac{1}{2\zeta}-\frac{d}{2}$ and $\gamma\in [0,1-(\lambda+\frac{d}{2})\zeta)$ we have
\begin{equation}\label{eq: Local time reg}
    \|L_{s,t}(\omega)\|_{H^{\lambda}}\leq C(\omega) |t-s|^\gamma.
\end{equation}
It follows that $L\in C^\gamma_T H^\lambda$, $\PP$-almost surely.
\end{thm}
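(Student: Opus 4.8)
The plan is to apply the stochastic sewing lemma (Lemma~\ref{stochastic sewing lemma}) to a suitable approximation of the local time, evaluated against test functions, and thereby obtain moment bounds on $\|L_{s,t}\|_{H^\lambda}$. Concretely, since the occupation measure satisfies $\mu_{s,t}(A) = \int_s^t \mathds 1_A(w_r)\,dr$, the local time should be $L_{s,t}(z) = \int_s^t \delta_0(w_r - z)\,dr$, interpreted via a mollification: for $\varepsilon>0$ set $L^\varepsilon_{s,t}(z) := \int_s^t p_\varepsilon(w_r - z)\,dr$ with $p_\varepsilon$ a Gaussian density of variance $\varepsilon$. The first step is to work in Fourier variables: $\widehat{L^\varepsilon_{s,t}}(\xi) = \int_s^t e^{-i\xi\cdot w_r} e^{-\varepsilon|\xi|^2/2}\,dr$, so that controlling $\|L_{s,t}\|_{H^\lambda}^2$ amounts to controlling $\int_{\RR^d} \langle\xi\rangle^{2\lambda} |\widehat{L_{s,t}}(\xi)|^2\,d\xi$, and in turn (by Fubini and taking $p$-th moments, $p$ large and even) $\int_{\RR^d}\langle\xi\rangle^{2\lambda}\,\EE\big|\int_s^t e^{-i\xi\cdot w_r}\,dr\big|^p\,d\xi$. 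For each fixed $\xi$ I would apply the stochastic sewing lemma to the germ $A^\xi_{s,t} := \int_s^t \EE[e^{-i\xi\cdot w_r}\mid\mathcal F_s]\,dr$, or more precisely to $A^\xi_{s,t} := (t-s)\,\EE[e^{-i\xi\cdot w_{s}}\mid\mathcal F_s]$-type building blocks; the natural choice is $A^\xi_{s,t} := \int_s^t \EE[e^{-i\xi\cdot w_r}\mid\mathcal F_s]\,dr$ whose sewn object is $\int_s^t e^{-i\xi\cdot w_r}\,dr$.

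The key estimates feeding into sewing are Gaussian computations. Writing $w_r = \EE[w_r\mid\mathcal F_s] + (w_r - \EE[w_r\mid\mathcal F_s])$ with the second term independent of $\mathcal F_s$ (Remark~\ref{rem:deterministic cond variance}), one gets $|\EE[e^{-i\xi\cdot w_r}\mid\mathcal F_s]| = \exp(-\tfrac12 \xi^T\mathrm{cov}(w_r\mid\mathcal F_s)\xi)$, and the $\zeta$-LND condition~\eqref{eq:LND condition} gives $\xi^T\mathrm{cov}(w_r\mid\mathcal F_s)\xi \ge c|\xi|^2 (r-s)^{2\zeta}$. Hence $\|\delta_u A^\xi_{s,t}\|_{L^p}$ and the conditional term $\|\EE[\delta_u A^\xi_{s,t}\mid\mathcal F_s]\|_{L^p}$ can both be bounded by integrals of the form $\int (\cdot)\,e^{-c|\xi|^2 (r-s)^{2\zeta}}\,dr \lesssim |\xi|^{-1/\zeta}\wedge |t-s|$, yielding exponents $\beta$ and $\kappa$ for the sewing lemma that depend on $\zeta$ and on a free interpolation parameter, provided $\kappa>1/2$ and $\beta>1$ can be arranged — this is exactly where the restriction $\zeta<1/d$ and the range $\lambda<\tfrac1{2\zeta}-\tfrac d2$, $\gamma<1-(\lambda+\tfrac d2)\zeta$ will come from. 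Sewing then produces $\EE\big|\int_s^t e^{-i\xi\cdot w_r}\,dr\big|^p \lesssim (|\xi|^{-1/\zeta}\wedge|t-s|)^{p\gamma'}$ for suitable $\gamma'$, and integrating $\langle\xi\rangle^{2\lambda}$ against this over $\RR^d$ — using $d$-dimensional polar coordinates and splitting at $|\xi| \sim |t-s|^{-\zeta}$ — gives $\EE\|L^\varepsilon_{s,t}\|_{H^\lambda}^p \lesssim |t-s|^{p\gamma}$ uniformly in $\varepsilon$; passing to the limit $\varepsilon\to 0$ (and identifying the limit with the genuine local time via the occupation-time formula) yields~\eqref{eq: Local time reg} in $L^p$.

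From the $L^p(\Omega)$ bound, Kolmogorov's continuity criterion applied to the two-parameter (or rather, since $L_{s,t} = L_t - L_s$, one-parameter) map $t\mapsto L_t \in H^\lambda$ upgrades~\eqref{eq: Local time reg} to an almost sure statement with a random constant $C(\omega)$, for a fixed $\gamma$ slightly below the threshold and a fixed $\lambda$; a countable intersection over rational $\lambda$ and $\gamma$ below their respective thresholds produces the single null set $\mathcal N$. This gives $L \in C^\gamma_T H^\lambda$ $\PP$-a.s., which is the final claim. The main obstacle I anticipate is the bookkeeping in the sewing step: one must verify the right-continuity and measurability hypotheses of Lemma~\ref{stochastic sewing lemma} for the germ $A^\xi$, and — more delicately — track the exact dependence of the sewing exponents $\beta,\kappa$ on the interpolation parameter so that the constraints $\beta>1$ and $\kappa>1/2$ translate precisely into the stated admissible range of $(\lambda,\gamma)$; getting the endpoint arithmetic right (especially the role of the dimension $d$ in $\lambda<\tfrac1{2\zeta}-\tfrac d2$) is where the real care is needed. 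A secondary technical point is ensuring all constants in the $\xi$-integration are uniform in $\varepsilon$ so the mollification limit is legitimate, and confirming the sewn object genuinely equals $\int_s^t e^{-i\xi\cdot w_r}\,dr$ rather than some other adapted process with the same local description.
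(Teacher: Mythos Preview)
Your proposal is correct and follows essentially the same route as the paper: Fourier-transform the occupation measure, apply the stochastic sewing lemma to the germ $A^\xi_{s,t}=\int_s^t\EE[e^{-i\xi\cdot w_r}\mid\cF_s]\,dr$, use the $\zeta$-LND condition for the Gaussian estimate, integrate in $\xi$ via Minkowski, and finish with Kolmogorov. Two simplifications the paper exploits that you should note: (i) the mollification $L^\varepsilon$ is unnecessary, since $\widehat{\mu_{s,t}}(\xi)=\int_s^t e^{-i\xi\cdot w_r}\,dr$ is already a bona fide function of $\xi$ and the sewing limit can be identified with it directly via dominated convergence; (ii) by the tower property one has $\EE[\delta_u A^\xi_{s,t}\mid\cF_s]=0$ exactly, so $K_1=0$ in Lemma~\ref{stochastic sewing lemma} and only the $\kappa>1/2$ constraint is active --- this is what produces the clean threshold $\lambda'<\tfrac{1}{2\zeta}$ (the paper uses the interpolation $e^{-C}\lesssim C^{-\lambda'/2}$ rather than splitting at $|\xi|\sim|t-s|^{-\zeta}$, which makes the arithmetic a bit more transparent).
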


\begin{proof}
We control the Sobolev regularity by deriving bounds for the Fourier transform of the occupation measure. By definition, the Fourier transform of $\mu_{s,t}$ is given by $\widehat{\mu_{s,t}}(z)=\int_s^t e^{i\langle z, w_r\rangle }dr$. 
Consider the process $A_{s,t}:=
\int_s^t \EE[e^{i \langle z,w_r\rangle}|\cF_s]dr$ (we will deal with the $z$ dependence of $A$ later, but for now we suppress the notation), where $\{\cF_t\}$ is the (completion of the) natural filtration generated by $w$. We will apply the stochastic sewing lemma to derive bounds for the moments of the limit $\mathcal A_{s,t}$ of the Riemann sums $\sum_{[u,v]\in \cP} A_{u,v}$. Then we will see that in fact $\mathcal A_{s,t} = \widehat \mu_{s,t}(z)$.

To apply the stochastic sewing lemma, we need to check that $A$ verifies the necessary conditions. By definition of $A$, it follows directly that $A_{s,s}=0$, $A_{s,t}$ is $\cF_t$ measurable, and $(s,t)\rightarrow A_{s,t}$ is right continuous. 
Furthermore,
\begin{equation*}
    \EE[\delta_u A_{s,t} |\cF_s]
    =\EE\left[\int_s^t \EE[e^{i\langle z,w_r\rangle}|\cF_s]dr-\int_s^u \EE[e^{i\langle z,w_r\rangle}|\cF_s]dr-\int_u^t \EE[e^{i\langle z,w_r\rangle}|\cF_u]dr|\cF_s\right]=0,
\end{equation*}
by the tower property of conditional expectations, and thus the condition $\|\EE[\delta_u A_{s,t}|\cF_s]\|_{L^p(\Omega)}
    =0$ in \eqref{eq:integrand cond} is satisfied.  
To show the second condition, i.e. $\| \delta_u A_{s,t} \|_{L^p(\Omega)}\leq C_1 |t-s|^{\kappa}$ for some $\kappa>\frac12$, we invoke the fact that the conditional covariance of a Gaussian process is deterministic as shown in Remark \ref{rem:deterministic cond variance}. Consequently,
\begin{equation*}
    \EE[e^{i\langle z,w_r\rangle}|\cF_s]=\exp\left(i\langle z,\mu_{r}^{\cF_s}\rangle -\frac{1}{2}z^T \Sigma^{\cF_s}_r z \right),
\end{equation*}
where $\mu_r^{\cF_s}:=\EE[w_r|\cF_s]$ and $\Sigma^{\cF_s}_r:=\mathrm{cov}(w_r|\cF_s)$. This yields
\begin{align*}
    \|\delta_u A_{s,t}\|_{L^p(\Omega)} & = \left\| \int_u^t \exp\left(i\langle z,\mu_{r}^{\cF_s}\rangle -\frac{1}{2}z^T \Sigma^{\cF_s}_r z \right)-\exp\left(i\langle z,\mu_{r}^{\cF_u}\rangle -\frac{1}{2}z^T \Sigma^{\cF_u}_r z \right)dr \right\| \\
    &\lesssim \int_u^t  \left[\exp\left( -\frac{1}{2}z^T \Sigma^{\cF_s}_r z \right)+\exp\left( -\frac{1}{2}z^T \Sigma^{\cF_u}_r z \right)\right]dr. 
\end{align*}

By assumption, $w$ is $\zeta$-LND, so denote by $M$ the constant given by the left hand side of \eqref{eq:LND condition}. Since  $(r-s)^{2\zeta}\geq (r-u)^{2\zeta}$ for any $(s,u)\in \Delta_2^T$,  we observe that
\begin{equation*}
      \| \delta_u A_{s,t} \|_{L^p(\Omega)} \lesssim  \int_u^t \exp\left( -\frac{M}{2}|z|^2(r-u)^{2\zeta} \right)dr. 
\end{equation*}
It is readily checked that 
\begin{equation*}
e^{-\frac{M}{2}(r-u)^{2\zeta}|z|^2}\leq e^{\frac{MT^{2\zeta}}{2}}e^{-\frac{M}{2}(r-u)^{2\zeta}(1+|z|^2)},    
\end{equation*}
and that for $\lambda' \geq 0$ we have  $e^{-C}\lesssim C^{-\frac{\lambda'}{2}}$, uniformly in $C>0$.
So we get for $\lambda'>0$ such that 
$\lambda' \zeta < 1$: 
\begin{align*}
      \| \delta_u A_{s,t} \|_{L^p(\Omega)} &\lesssim \frac{M^{\lambda'}}{2^{\lambda'}}e^{\frac{MT^{2\zeta}}{2}} \int_u^t (1+|z|^{2})^{-\frac{\lambda^\prime}{2}}(r-u)^{-\lambda^\prime\zeta} z \,dr 
      \\
      &\simeq (1+|z|^{2})^{-\frac{\lambda^\prime}{2}}(t-u)^{1-\lambda^\prime\zeta} \\
      & = (1+|z|^{2})^{-\frac{\lambda^\prime}{2}}(t-u)^{\kappa}.
\end{align*}

If $\lambda^\prime < \frac{1}{2\zeta}$, then $1-\lambda'\zeta > \frac12$ and we can apply the stochastic sewing lemma, more precisely~\eqref{bounds on stochastic integral} together with Minkowski's inequality, to deduce that the ``sewing'' $\mathcal A_{s,t}$ satisfies
\begin{equation*}
    \|\mathcal A_{s,t}\|_{L^p(\Omega)}\lesssim \|A_{s,t}\|_{L^p(\Omega)}+  (1+|z|^2)^{-\frac{\lambda^\prime}{2}}|t-s|^{1-\lambda^\prime\zeta},
\end{equation*} 
where we recall that $    \|\EE[\delta_u A_{s,t}|\cF_s]\|_{L^p(\Omega)}=0$. 
It is now readily seen, following the lines of the previous analysis,  that we also have  
\begin{equation*}
    \|A_{s,t}\|_{L^p(\Omega)} \lesssim   (1+|z|^2)^{-\frac{\lambda^\prime}{2}}|t-s|^{1-\lambda^\prime\zeta}.
\end{equation*}
Moreover, we get for $t^n_k = s + (t-s)k/n$
\begin{align*}
    \| \widehat{\mu_{s,t}}(z) - \mathcal A_{s,t}\|_{L^p(\Omega)} & \le \sum_{k=0}^{n-1} \left\|\int_{t^n_k}^{t^n_{k+1}} (e^{i \langle z, w_r \rangle} - \EE[e^{i \langle z, w_r \rangle}|\mathcal F_{t^n_k}])dr \right\|_{L^p(\Omega)} \\
    & \le 2 \sum_{k=0}^{n-1} \int_{t^n_k}^{t^n_{k+1}} \left\| e^{i \langle z, w_r \rangle} - e^{i \langle z, w_{t^n_k}\rangle}\right\|_{L^p(\Omega)} dr,
\end{align*}
and since $w$ is continuous, the dominated convergence theorem shows that the right hand side converges to zero as $n\to\infty$. In conclusion we have shown that 
\begin{equation*}
    \|\widehat{\mu_{s,t}}(z)\|_{L^p(\Omega)}\lesssim   (1+|z|^2)^{-\frac{\lambda^\prime}{2}}|t-s|^{1-\lambda^\prime\gamma}.
\end{equation*}
We will now use this moment bound together with Kolmogorov's continuity criterion to derive the claimed regularity of $\mu$. 
For $p\geq 2$ and $\epsilon \in (0, \lambda^\prime-\frac{d}{2})$ we apply Minkowski's inequality to obtain
\begin{align*}
    \EE[\|\mu_{s,t}\|_{H^{\lambda^\prime-\frac{d}{2}-\epsilon}}^p]^{\frac{1}{p}} &= \EE\left[\left(\int_{\mathbb{R}^d}|\widehat{\mu_{s,t}}(z)|^2(1+|z|^2)^{\lambda^\prime-\frac{d}{2}-\epsilon}dz\right)^{\frac{p}{2}}\right]^\frac{1}{p}
    \\
    &\leq \left(\int_{\mathbb{R}^d} \||\widehat{\mu_{s,t}}(z)|^2\|_{L^\frac{p}{2}(\Omega)}(1+|z|^2)^{\lambda^\prime-\frac{d}{2}-\epsilon}dz\right)^\frac{1}{2}
    \\
    &= \left(\int_{\mathbb{R}^d} \|\widehat{\mu_{s,t}}(z)\|^2_{L^p(\Omega)}(1+|z|^2)^{\lambda^\prime-\frac{d}{2}-\epsilon}dz\right)^\frac{1}{2}
    \\
    & \lesssim  \left(\int_{\mathbb{R}^d} \left( (1+|z|^2)^{-\frac{\lambda^\prime}{2}}|t-s|^{1-\lambda^\prime\zeta}\right)^2(1+|z|^2)^{\lambda^\prime-\frac{d}{2}-\epsilon}dz\right)^\frac{1}{2}
    \\
    &\lesssim |t-s|^{1-\lambda^\prime\zeta}\int_{\mathbb{R}^d} (1+|z|^2)^{-\frac{d}{2}-\epsilon}dz,
\end{align*}
and the integral on the right hand side is finite for any $\epsilon>0$. 
Since $p\geq 2$ can be chosen arbitrarily large, it follows from Kolmogorov's continuity theorem that for any $\gamma \in [0,1-\lambda^\prime\zeta)$ there exists a set $\cN^c$ of full measure such that for all $\omega\in \cN^c $ and $(s,t)\in \Delta_2^T$ we have  
\begin{equation}
    \|\mu_{s,t}(\omega)\|_{H^{\lambda^\prime-\frac{d}{2}-\epsilon}} \leq C(\omega) |t-s|^\gamma. 
\end{equation}
So with $\lambda=\lambda^\prime-\frac{d}{2}-\epsilon$ and we obtain the claimed result \eqref{eq: Local time reg}. Moreover, since $\zeta < \frac{1}{d}$ we can choose $\lambda > 0$ and in particular $\mu_t(\omega) \in L^2$ and the density $L_t(\omega)$ exists.
\end{proof}

\begin{rem}\label{rem:weaker LND}
As pointed out by the referee, the infinite regularization property of a Gaussian process could be obtained under a weaker local non-determinism condition: Suppose $w$ is a $d$-dimensional Gaussian process and that for some $\xi\in (0,1)$ there exists a $c_\xi>0$ such that 
\begin{equation}
\mathrm{cov}(w_t-w_s)\ge c_\xi|t-s|^{2\xi} I_{d},\quad \forall s,t\in [0,T],
\end{equation}
where the inequality is interpreted in the sense of quadratic forms. By similar computations as in the proof of Theorem \ref{thm: regualritry of avg op} we see that for $0\leq s<t\leq T$ with $|t-s|\leq 1$ 
\begin{align*}
\EE[\|\mu_{s,t}\|_{H^\lambda}^2]&\leq \int_{[s, t]^2} \int_{\mathbb{R}^d} (1 + | z |^2)^{\lambda} e^{- c_\xi | z
  |^2 (t - s)^{2 \xi}} d z d r d u
  \\
  &= | t - s |^2 \int_{\mathbb{R}^d} (1 + | z |^2)^{\lambda} e^{- c_\xi | z
  |^2 (t - s)^{2 \xi}} d z
  \\
  &= | t - s |^2 \int_{\mathbb{R}^d} (1 + | z |^2)^{\lambda} \frac{(t -
  s)^{- \xi d}}{(t - s)^{- \xi d}} e^{- c_\xi \frac{| z |^2}{(t - s)^{- 2 \xi}}}
  d z.
\end{align*}
Thus for a Gaussian random variable $Z\sim \cN(0,I_{d})$ we have for any $\lambda >-\frac d2$
\begin{equation}
\EE[\|\mu_{s,t}\|_{H^\lambda}^2]\leq |t-s|^{2-\xi d}\EE[(1+|(t-s)^{-\xi}Z|^2)^\lambda] \lesssim |t-s|^{2-\xi d-2\xi \lambda}. 
\end{equation}
Under the assumption that $\frac{1}{2\xi}-\frac{d}{2} >\lambda $ it follows from Kolmogorov's continuity criterion that $\mu \in C_T^{\frac12 - \frac{\xi d}{2} - \xi \lambda}H^\lambda$, and in particular $\sup_{t\in [0,T]} \|\mu_t\|_{H^\lambda}<\infty$. If the parameter $\xi$ can be chosen arbitrarily in $(0,1)$, this holds for all $\lambda$ and thus it follows from Corollary \ref{cor:cont in time of LT} that $w$ is infinitely regularizing. But Theorem~\ref{thm: regualritry of avg op} gives much better time regularity (with the previous argument we can never obtain time regularity better than $1/2$), and it controls the $L^p$-moments of the Sobolev norm.%
\end{rem}

\begin{rem}
Let $b\in C(\RR^d)$, and  $w:\Omega\times [0,T]\rightarrow \RR^d$ be a $\zeta$-LND Gaussian process for some $\zeta\in (0,\frac1d)$. Set $T^w_tb(x):=\int_0^tb(x+w_r)dr$, and observe that $T^w_tb=  b \ast (L_t(-\cdot))$ where $L_t$ is the local time of $w$. Invoking the regularity of the local time obtained in Theorem \ref{thm: regualritry of avg op} together with Young's convolution inequality, there exists a null set $\cN\subset \Omega$ only depending on $w$, such that for all $\omega\in \cN^c$  and for all $\epsilon>0$:
\begin{equation}
    \|T^{w(\omega)}_tb-T^{w(\omega)}_sb\|_{\cC^{\alpha+\frac{1}{2\zeta}-\frac{d}{2}-\epsilon}} \lesssim \|b\|_{H^\alpha} \| L_{s,t}(\omega) \|_{H^{\frac{1}{2\zeta}-\frac{d}{2}-\epsilon}} \lesssim \|b\|_{H^\alpha} |t-s|^\gamma,
\end{equation}
for some $\gamma>\frac{1}{2}$. Compared to Theorem 1.1 of \cite{Catellier2016} we lose $\frac{d}{2}$ derivatives in our estimate, but we gain integrability. The main difference is that the null set $\cN\subset \Omega$ in \cite{Catellier2016} depends on the function $b$. On  the other hand, through functional embeddings of the form $\cF L^{\rho,\infty}\hookrightarrow H^{\rho-\frac{d}{2}}$ one could deduce from the  Fourier-Lebesgue regularity estimates in \cite{Catellier2016} that if $w$ is a fractional Brownian motion, then the averaged field $T^{-w}\delta$ is contained in $H^{\frac{1}{2\zeta}-\frac{d}{2}-\epsilon}$. Noting that the local time $L$ is in fact equal to the averaged field $T^{-w}\delta$, one obtains the same regularity as in Theorem \ref{thm: regualritry of avg op}. One advantage of Theorem \ref{thm: regualritry of avg op} is that the argument is directly given in a classical Sobolev space, without need for functional embeddings. Also, the core of the argument might  be useful for estimating the space-time regularity in more general $B_{p,p}^\alpha$ Sobolev spaces, although the computations would become more involved than in the $L^2$ case which can be elegantly handled with Fourier arguments. It would also  be possible to directly estimate the regularity of $T^wb$ using similar arguments as in the proof of Theorem \ref{thm: regualritry of avg op}. Indeed, we observe that the Fourier transform of $T^wb$ is $\hat{b}(z)\int_s^t e^{i\langle z,w_r\rangle}dr$. It is then readily checked that we recover similar regularity results as in \cite[Theorem 1.1]{Catellier2016}, although in $H^\alpha$ spaces. But as the main goal of this article is to provide a path-wise analysis of infinitely regularizing paths, we want to avoid the dependence of the null sets on $b$ and therefore we estimate the regularity of $L$.
\end{rem}

\begin{rem}
	At least in the case of a fractional Brownian motion with Hurst parameter $\zeta$ we get from 
	\cite[Theorem~1.4]{Catellier2016} a control of the $\frac{1}{2\zeta}-$ regularity of $L$ in a Fourier-Lebesgue space, while here we only control the $\frac{1}{2\zeta}-\frac{d}{2}-$ Sobolev regularity. Implicitly, Conjecture~1.2 of~\cite{Catellier2016} suspects that the loss of $\frac{d}{2}$ derivatives in our result can be avoided and that we should have $L \in C^{\frac12+}B_{1,1}^{\frac{1}{2\zeta}-}$. Indeed, the  conjecture claims that the fractional Brownian motion $w$ satisfies for any Schwartz function $K$ with $\int_{\RR^d} K(x) dx =0$:
	\[
		\EE\left[\left(\int_{\RR^d}\left| \int_0^t K(x+w(s)) ds\right| dx\right)^p\right] \lesssim t^{p/2}, \qquad t \to \infty.
	\]
	Recall that the Littlewood-Paley blocks satisfy $\Delta_jf = K_j\ast f$, where $K_j(x)=2^{jd} K_0(2^jx)$ for $j\geq0$ and $K_0$ is a Schwartz function with $\int_{\RR^d}K_0(x) dx = 0$.
	By invoking the definition of the local time, the scaling invariance of the fractional Brownian motion, and the conjecture, we would obtain
	\begin{align*}
		\EE\left[\left| \int_{\RR^d} |\Delta_j L_t(x) |dx\right|^p\right] &= \EE\left[\left|\int_{\RR^d} \bigg| \int_{0}^t 2^{jd} K_0(2^{j}x-2^jw(s))ds\bigg|dx\right|^p\right]
	\\
	&= \EE\left[\left| \int_{\RR^d} 2^{jd}\bigg| \int_{0}^t K_0(2^{j}x-w(2^{j/\zeta}s))ds\bigg|dx\right|^p\right]
	\\
	&= 2^{-jp/ \zeta} \EE\left[\left| \int_{\RR^d} \bigg|
  \int_0^{2^{j / \zeta} t} K_0 (x - w (s)) ds \bigg| d x\right|^p\right]\\
    & \overset{\text{Conj.}}{\lesssim} 2^{-jp / \zeta}(2^{j/\zeta}t)^{p/2} = t^{p/2} 2^{-jp/ (2\zeta)}.
	\end{align*}
	From this we would obtain
	\begin{align*}
 	 \| \| L_t \|_{B_{1, 1}^{\alpha}} \|_{L^p (\Omega)}  & \leq  \sum_{j
 	 \geqslant -1} 2^{j \alpha} \left\| \int_{\RR^d} \left| \Delta_j L_t(x) \right| d x \right\|_{L^p (\Omega)} \\
 	 & \lesssim \left\| \int_{\RR^d} \left| \Delta_{-1} L_t(x) \right| d x \right\|_{L^p (\Omega)} + \sum_{j\ge0} 2^{j(\alpha - \frac{1}{2\zeta})}t^{1/2},
	\end{align*}
	and the sum in $j \ge 0$ is finite as long as $\alpha < \frac{1}{2\zeta}$, while the first term on the right hand side is easily shown to be finite. This would yield $L_t \in B^{\frac{1}{2\zeta}-\epsilon}_{1,1}$, 
and in particular $L_t \in L^1$ whenever $\frac{1}{2\zeta} > 0$. 
But of course in general it does not only depend on the Hurst parameter $\zeta$ but also on the dimension whether the fractional Brownian motion has an absolutely continuous occupation measure. For example, Xiao~\cite[Theorem~2.1]{Xiao1995} shows that if $w$ is a $d$-dimensional fractional Brownian motion of Hurst index $\zeta$, then the Hausdorff dimension of $(w_t)_{t \in [0,1]}$ is equal to $\min\{d, \frac{1}{\zeta}\}$. If $\frac{1}{\zeta} < d$,  the image of $(w_t)_{t \in [0,1]}$ is thus a null set in $\RR^d$ and therefore the occupation measure cannot be absolutely continuous.
This then also gives a negative answer to Conjecture 1.2 posed in \cite{Catellier2016}, at least whenever $\frac{1}{\zeta}<d$.
 Note also that $\frac{1}{\zeta} < d$ is equivalent to $\frac{1}{2\zeta} - \frac{d}{2} < 0$ and that Theorem~\ref{thm: regualritry of avg op} gives us space regularity $\frac{1}{2\zeta} - \frac{d}{2}-$, i.e. for $\frac{1}{2\zeta} - \frac{d}{2} > 0$ it follows from Theorem~\ref{thm: regualritry of avg op} that the local time exists (and then immediately has $L^2$-Sobolev regularity and not just $B_{1,1}$-Besov regularity).
\end{rem}

\section{Infinitely regularizing stochastic processes}\label{sec: Infinitely regularizing stochastic processes}

It follows from Theorem \ref{thm: Cond for gaussian regualrizing} together with Corollary \ref{cor:cont in time of LT} that if $w$ is a continuous centered Gaussian process which is $\zeta$-LND for any $\zeta > 0$, then $w$ is almost surely infinitely regularizing in the sense of Definition \ref{def:infinitely-regularizing}. Here we present two examples of such processes.

\subsection{$p-$log-Brownian motions}

If the conditional variance $\mathrm{Var}(w_{t+h}|\mathcal{F}_t)$ of a continuous centered Gaussian process is bounded below by $\phi(h):=|\ln(1/h)|^{-p}$, for some $p > 0$, then it is $\zeta$-LND for any $\zeta>0$. Thus our first example has an incremental variance structure resembling $\phi$. This is partly inspired by \cite{GemHoro}, where the authors mention in a remark below Theorem~28.4 that Gaussian processes with incremental variance behaving like the logarithm around the origin, i.e.  $\sim|\ln\left(1/t\right)|^{-1}$
for $t \to 0$, seem to have local times with exceptional (spatial) regularity. In \cite{MOCIOALCA_Viens2005} the authors investigate a Gaussian process they call the log-Brownian motion. The same process has also recently been investigated for the purpose of super rough volatility modelling in \cite{gulisashvili2020}.

\begin{defn}
\label{def:log BM}Consider $[0,T]\subset [0,1)$ and a $p>\frac{1}{2}$, and let $B:[0,T]\times \Omega \rightarrow \RR^d$ be a $d$-dimensional Brownian motion. We define
the \emph{$p-$log Brownian motion} as
\begin{equation}
w_{t}^{p}:=\int_{0}^{t}k(t-s)dB_{s},\label{eq:log bm}
\end{equation}
where $k(t):=|t\ln(1/t)^{2p}|^{-\frac{1}{2}} \in L^2([0,T])$, and the integration is to be understood component-wise.
\end{defn}

\begin{rem}
Since for $p>\frac{1}{2}$ the function $t^{-1}\ln(1/t)^{-2p}$ has a non-integrable singularity at $t=1$, we have to take $T<1$. For larger $T$ we could rescale the kernel and consider $k_\beta(t)=k(\beta t)$ for $\beta > T$ instead. See also the discussion below Definition 18 of~\cite{MOCIOALCA_Viens2005} or \cite{gulisashvili2020}. To obtain a stationary version we could for example consider $k(t)=(t(|\ln(1/t)^{2p}| \vee 1))^{-\frac{1}{2}} \in L^2(\RR_+)$  and then $w^p_t = \int_{-\infty}^t k(t-s) dB_s$ for a two-sided $d$-dimensional Brownian motion $B$. For simplicity we do not make these adaptations and we restrict to $T<1$ for the rest of the subsection.
\end{rem}

\begin{prop}\label{prop:regualrity of Log Brownian motion}
For $p>1$ there exists a continuous version of the $p$-log Brownian.
\end{prop}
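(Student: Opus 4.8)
The plan is to exploit the Gaussian, Volterra-type structure of $w^{p}$: I would first reduce sample path continuity to an estimate on the canonical (incremental) pseudometric $\rho(s,t):=\EE\big[|w^{p}_{t}-w^{p}_{s}|^{2}\big]^{1/2}$, and then feed this estimate into a sharp sample continuity criterion for Gaussian processes. The point to keep in mind from the outset is that $\rho(s,t)$ will turn out to decay only \emph{logarithmically} in $|t-s|$, so the usual polynomial Kolmogorov--Chentsov criterion is useless here and one must instead use Dudley's metric-entropy bound (equivalently, an exponential Garsia--Rodemich--Rumsey estimate); it is precisely the borderline of this criterion that produces the condition $p>1$.

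\emph{Step 1: the incremental variance.} Writing $h:=t-s$ and splitting
\[
w^{p}_{t}-w^{p}_{s}=\int_{0}^{s}\big(k(t-r)-k(s-r)\big)\,dB_{r}+\int_{s}^{t}k(t-r)\,dB_{r},
\]
the two Wiener integrals have integrands supported on the disjoint intervals $[0,s]$ and $[s,t]$, hence are uncorrelated, so after the substitution $u=s-r$
\[
\EE\big[|w^{p}_{t}-w^{p}_{s}|^{2}\big]=d\left(\int_{0}^{h}k(v)^{2}\,dv+\int_{0}^{s}\big(k(u)-k(h+u)\big)^{2}\,du\right).
\]
For the first term, $\int_{0}^{h}k(v)^{2}\,dv=\int_{0}^{h}v^{-1}(\ln(1/v))^{-2p}\,dv=\frac{1}{2p-1}\,(\ln(1/h))^{1-2p}$, which is finite exactly because $p>\frac12$. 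For the second term one checks by an elementary computation that $k$ is positive and nonincreasing on $(0,\delta)$ with $\delta:=e^{-2p}$: on the part of the range where $u+h<\delta$ the inequality $(a-b)^{2}\le a^{2}-b^{2}$ for $a\ge b\ge 0$ gives $\int_{0}^{\delta-h}(k(u)-k(h+u))^{2}\,du\le\int_{0}^{h}k(v)^{2}\,dv$, while on the remaining range the arguments of $k$ lie in a fixed compact subset of $(0,1)$ on which $k$ is Lipschitz, contributing at most $O(h^{2})$. Hence, for $|t-s|$ small,
\[
\rho(s,t)^{2}\lesssim(\ln(1/|t-s|))^{1-2p}.
\]

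\emph{Step 2: Dudley's criterion.} Put $\psi(u):=C\,(\ln(1/u))^{-(p-1/2)}$, a nondecreasing modulus vanishing at $0$ (again $p>\frac12$), so that $\rho(s,t)\le\psi(|t-s|)$ on small scales. An interval of length $\psi^{-1}(\epsilon)=\exp(-c\,\epsilon^{-1/(p-1/2)})$ has $\rho$-diameter at most $\epsilon$, so the covering numbers of $[0,T]$ in the canonical metric satisfy $\ln N(\epsilon)\lesssim\epsilon^{-1/(p-1/2)}$, and Dudley's entropy integral obeys
\[
\int_{0}^{\epsilon_{0}}\sqrt{\ln N(\epsilon)}\,d\epsilon\lesssim\int_{0}^{\epsilon_{0}}\epsilon^{-\frac{1}{2p-1}}\,d\epsilon<\infty\qquad\Longleftrightarrow\qquad p>1.
\]
By Dudley's theorem for Gaussian processes, for $p>1$ the process $w^{p}$ admits a modification whose sample paths are almost surely $\rho$-continuous, hence (since $\rho(s,t)\le\psi(|t-s|)\to0$) continuous on $[0,T]$, which is the claim.

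The main obstacle is the variance estimate in Step~1: the kernel $k$ is singular at the origin and only monotone near $0$, so the off-diagonal integral $\int_{0}^{s}(k(u)-k(h+u))^{2}\,du$ has to be separated into a near-singularity part, controlled by the monotonicity trick above, and a regular part, controlled by a crude Lipschitz bound. Once the sharp logarithmic modulus $(\ln(1/|t-s|))^{(1-2p)/2}$ has been pinned down, the threshold $p=1$ is forced as the convergence threshold of Dudley's entropy integral; in particular, for $\frac12<p<1$ the matching lower bound $\rho(s,t)^{2}\ge\frac{d}{2p-1}(\ln(1/|t-s|))^{1-2p}$ together with Sudakov minoration shows that no continuous modification can exist, so the restriction $p>1$ is essentially optimal for this statement.
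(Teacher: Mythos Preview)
Your argument is correct. The paper does not actually prove this proposition: it simply refers to \cite{MOCIOALCA_Viens2005} (Definition~18 and the discussion below) and \cite[Remark~2.5]{gulisashvili2020}. What you have written is a self-contained proof that those references essentially carry out as well: one first pins down the logarithmic incremental modulus $\rho(s,t)^{2}\lesssim(\ln(1/|t-s|))^{1-2p}$ by splitting the Volterra representation and handling the near-singularity part of the kernel via monotonicity, and then invokes a Gaussian continuity criterion that can cope with a merely logarithmic modulus. Your use of Dudley's entropy integral is exactly the right tool here (Kolmogorov--Chentsov would indeed be useless), and your computation $\int_{0}^{\epsilon_{0}}\epsilon^{-1/(2p-1)}\,d\epsilon<\infty\iff p>1$ is precisely the threshold identified in the cited references. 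The closing remark on optimality via Sudakov minoration is a nice complement, though strictly speaking Sudakov gives unboundedness rather than discontinuity directly; one would pass through Belyaev's dichotomy (or Fernique's necessary condition) to conclude that no continuous modification exists for $\tfrac12<p\le1$.
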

\begin{proof}
	See \cite{MOCIOALCA_Viens2005}, Definition 18 and below, or \cite[Remark 2.5]{gulisashvili2020}. 
\end{proof}

\begin{cor}
\label{cor:Existence of Local time lBm} For $p>1$  the $d$-dimensional $p$-log Brownian motion $w^p$ is $\mathbb{P}-a.s.$ infinitely regularizing. 
\end{cor}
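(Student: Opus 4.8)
The plan is to deduce the corollary directly from Theorem~\ref{thm: Cond for gaussian regualrizing}: it suffices to check that (after completing the underlying probability space) the $p$-log Brownian motion $w^p$ is a continuous centered Gaussian process which is $\zeta$-LND, with respect to the natural filtration $\{\cF_t\}$ of the driving Brownian motion $B$, for every $\zeta\in(0,1)$. Continuity for $p>1$ is exactly Proposition~\ref{prop:regualrity of Log Brownian motion}. Since for each $t$ the random variable $w^p_t=\int_0^t k(t-r)\,dB_r$ is a Wiener integral of the deterministic kernel $k(t-\cdot)\in L^2([0,t])$ against $B$, it is centered Gaussian and $\cF_t$-measurable, and the whole process is jointly Gaussian.

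Next I would compute the conditional covariance. For $0\le s<t\le T$ I split
\[
w^p_t=\int_0^s k(t-r)\,dB_r+\int_s^t k(t-r)\,dB_r,
\]
where the first summand is $\cF_s$-measurable and the second lies in the closed linear span of the increments of $B$ over $[s,t]$ and is therefore independent of $\cF_s$ (here it is essential that $B$ is a Brownian motion, not a general Gaussian process). Hence $\EE[w^p_t\,|\,\cF_s]=\int_0^s k(t-r)\,dB_r$ and, by the It\^o isometry applied componentwise together with the substitution $u=t-r$,
\[
\mathrm{cov}(w^p_t\,|\,\cF_s)=\Big(\int_s^t k(t-r)^2\,dr\Big)\,I_d=\Big(\int_0^{t-s} k(u)^2\,du\Big)\,I_d .
\]

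The heart of the argument is then to bound $\int_0^h k(u)^2\,du$ from below by a power $h^{2\zeta}$ with an arbitrarily small exponent. Using $k(u)^2=\big(u\,(\ln(1/u))^{2p}\big)^{-1}$ on $(0,1)$ and the change of variables $v=\ln(1/u)$, one gets, because $p>\tfrac12$,
\[
\int_0^h k(u)^2\,du=\int_{\ln(1/h)}^{\infty}\frac{dv}{v^{2p}}=\frac{1}{2p-1}\,(\ln(1/h))^{-(2p-1)},\qquad h\in(0,1).
\]
Now fix $\zeta>0$. Since $h\mapsto h^{2\zeta}\,(\ln(1/h))^{2p-1}\to0$ as $h\downarrow0$ and this map is continuous on $(0,T]$ with $T<1$, it is bounded on $(0,T]$ by some constant $C_\zeta<\infty$; consequently, for all $0\le s<t\le T$ and all $z\in\RR^d$,
\[
z^T\,\mathrm{cov}(w^p_t\,|\,\cF_s)\,z=\frac{|z|^2}{2p-1}\,(\ln(1/(t-s)))^{-(2p-1)}\ \ge\ \frac{|z|^2}{(2p-1)\,C_\zeta}\,(t-s)^{2\zeta},
\]
so the infimum in \eqref{eq:LND condition} is at least $\big((2p-1)C_\zeta\big)^{-1}>0$ and $w^p$ is $\zeta$-LND. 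As $\zeta\in(0,1)$ was arbitrary, Theorem~\ref{thm: Cond for gaussian regualrizing} yields that $w^p(\omega)$ is infinitely regularizing for $\PP$-almost every $\omega$.

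I expect the only genuinely delicate points to be: identifying the correct filtration and checking that the post-$s$ Wiener integral is independent of $\cF_s$ (which uses that the driver $B$ is a Brownian motion, not a general Gaussian process); and the analytic observation that the logarithmic kernel makes $\int_0^h k(u)^2\,du$ decay only like $(\ln(1/h))^{-(2p-1)}$ as $h\downarrow0$, hence slower than any power $h^{2\zeta}$, so that the $\zeta$-LND bound holds simultaneously for every $\zeta>0$. Everything else is bookkeeping.
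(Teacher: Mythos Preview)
Your proof is correct and follows essentially the same approach as the paper: verify continuity via Proposition~\ref{prop:regualrity of Log Brownian motion}, compute the conditional covariance $\mathrm{cov}(w^p_t\,|\,\cF_s)=(2p-1)^{-1}(\ln(1/(t-s)))^{1-2p}I_d$ from the Volterra representation, and observe that this logarithmic decay dominates $(t-s)^{2\zeta}$ for every $\zeta>0$, so the $\zeta$-LND condition holds and Theorem~\ref{thm: Cond for gaussian regualrizing} applies. Your write-up is in fact somewhat more careful than the paper's in justifying the conditional covariance formula (via the splitting of the Wiener integral and independence of increments of $B$) and in spelling out why the infimum in \eqref{eq:LND condition} is strictly positive.
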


\begin{proof}
By definition, the $d$-dimensional $p$-lBm  is a centered Gaussian process, and according to Proposition~\ref{prop:regualrity of Log Brownian motion} it is continuous if $p>1$. 
By Theorem~\ref{thm: regualritry of avg op} together with Corollary~\ref{cor:cont in time of LT} we obtain that if $w^p$ is $\zeta$-LND for any  $\zeta>0$, then it is infinitely regularizing. So let us compute the conditional variance for $(s,t)\in \Delta_2$:
\begin{equation}
    \mathrm{Var}(w_t^p |\cF_s) =\int_s^tk(t-r)^2 dr I_d, 
\end{equation}
where $k(t)=|t\ln(1/t)^{2p}|^{-\frac{1}{2}}$ and $I_d$ is the $d$-dimensional unit matrix. 
 By elementary computations, using that $\frac{d}{dt}\frac{\ln(1/t)^{1-2p}}{2p-1}= k(t)^2$, we obtain that 
\begin{equation}
    \int_s^tk(t-r)^2dr=\left(2p-1\right)^{-1}\ln(1/(t-s))^{1-2p}. 
\end{equation}
Of course $\inf_{t\in (0,T]} \inf_{s \in [0,t)}\frac{|\ln(1/(t-s))|^{1-2p}}{(t-s)^{2\zeta}}>0$, so $w^p$ is $\zeta$-LND for any $\zeta > 0$ and therefore infinitely regularizing.

\end{proof}

\subsection{Infinite series of fractional Brownian motions}
We will here show that also the process considered in \cite{ProskeBanosAmine} is an infinitely regularizing process according to Definition \ref{def:infinitely-regularizing}.

\begin{prop}
Consider the process $\mathbb{B}:\left[0,T\right]\times \Omega \rightarrow\mathbb{R}^{d}$ introduced
in \cite{ProskeBanosAmine} given by 
\[
\mathbb{B}_{t}:=\sum_{n\geq0}\lambda_{n}B_{t}^{H_{n}}.
\]
Here $\left(\lambda_{n}\right)_{n\geq0}$ and and $\left(H_{n}\right)_{n\geq0}\in (0,1)$ are  null sequences such that   $\lambda_{n},H_n>0$ for all $n\geq0$. Moreover, 
$(B^{H_{n}})_{n\ge0}$ is sequence of independent $\mathbb{R}^{d}$-valued
fractional Brownian motion of Hurst parameter $H_{n}$. Additionally, we assume that  
\begin{equation}\label{eq:cont of monster}
\sum_{n\geq0}|\lambda_{n}|\mathbb{E}\left[\sup_{0\leq s\leq1}|B_{s}^{H_{n}}|\right]<\infty.
\end{equation}
 Then there exists a null set $\mathcal{N}\subset\Omega$ such that $\mathbb B(\omega)$ is infinitely regularizing for all $\omega\in \mathcal{N}^c$.
\end{prop}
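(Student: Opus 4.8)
The plan is to check the two hypotheses of Theorem~\ref{thm: Cond for gaussian regualrizing} for $\mathbb B$: that it has a continuous centered Gaussian modification, and that it is $\zeta$-LND for every $\zeta\in(0,1)$. Continuity follows from~\eqref{eq:cont of monster}: each $B^{H_n}$ has a continuous modification, and by self-similarity $\EE[\sup_{s\le T}|B^{H_n}_s|]=T^{H_n}\EE[\sup_{u\le1}|B^{H_n}_u|]\le\max(1,T)\EE[\sup_{u\le1}|B^{H_n}_u|]$ since $H_n\in(0,1)$, so $\sum_n|\lambda_n|\sup_{s\le T}|B^{H_n}_s(\omega)|<\infty$ outside a null set; the partial sums $S_N:=\sum_{n\le N}\lambda_n B^{H_n}$ then converge uniformly on $[0,T]$ to a continuous process, which is centered Gaussian as an a.s. limit of centered Gaussian processes (dominated convergence handles the mean), with $\EE|\mathbb B_t|^2=\sum_n\lambda_n^2 t^{2H_n}<\infty$ (note $\sum_n|\lambda_n|<\infty$, since $\EE[\sup_{u\le1}|B^{H_n}_u|]\ge\EE|B^{H_n}_1|$ is bounded below uniformly in $n$ because $B^{H_n}_1\sim\mathcal N(0,I_d)$).

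For the nondeterminism, fix $\zeta\in(0,1)$ and, since $H_n\to0$ with $H_n>0$, pick $n_0$ with $H_{n_0}\le\zeta$. Write $\{\mathcal G_s\}$ for the completed filtration generated by \emph{all} the $B^{H_n}$. Then $\mathbb B_r$ is $\mathcal G_r$-measurable, so the natural filtration $\{\mathcal F_s\}$ of $\mathbb B$ satisfies $\mathcal F_s\subseteq\mathcal G_s$, and since for Gaussian processes a coarser conditioning $\sigma$-algebra only enlarges the (deterministic) conditional covariance in the positive-semidefinite order, we get $\mathrm{cov}(\mathbb B_t|\mathcal F_s)\succeq\mathrm{cov}(\mathbb B_t|\mathcal G_s)$. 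By independence of the $B^{H_n}$ we have $\mathbb B_t-\EE[\mathbb B_t|\mathcal G_s]=\sum_n\lambda_n\big(B^{H_n}_t-\EE[B^{H_n}_t|\mathcal F^{B^{H_n}}_s]\big)$, a sum of independent centered Gaussian terms, hence
\[
\mathrm{cov}(\mathbb B_t|\mathcal F_s)\succeq\mathrm{cov}(\mathbb B_t|\mathcal G_s)=\sum_{n\ge0}\lambda_n^2\,\mathrm{cov}\big(B^{H_n}_t\,\big|\,\mathcal F^{B^{H_n}}_s\big)\succeq\lambda_{n_0}^2\,\mathrm{cov}\big(B^{H_{n_0}}_t\,\big|\,\mathcal F^{B^{H_{n_0}}}_s\big).
\]

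By the strong local nondeterminism of fractional Brownian motion (see e.g.~\cite{Pitt78,Xiao2006}) there is $c_H>0$ with $\mathrm{cov}(B^H_t|\mathcal F^{B^H}_s)\succeq c_H|t-s|^{2H}I_d$ uniformly for $0\le s<t\le T$; since $2(H_{n_0}-\zeta)\le0$ and $|t-s|\le T$, we have $|t-s|^{2H_{n_0}}\ge\min\{1,T^{2(H_{n_0}-\zeta)}\}\,|t-s|^{2\zeta}$, so $\mathrm{cov}(\mathbb B_t|\mathcal F_s)\succeq c_\zeta|t-s|^{2\zeta}I_d$ for some $c_\zeta>0$, which is exactly~\eqref{eq:LND condition}. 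As $\zeta\in(0,1)$ was arbitrary, Theorem~\ref{thm: Cond for gaussian regualrizing} produces a null set $\mathcal N$ such that $\mathbb B(\omega)$ is infinitely regularizing for $\omega\in\mathcal N^c$.

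The only nonroutine input is the strong local nondeterminism of fractional Brownian motion, and the one point requiring care is the interplay of the two filtrations $\{\mathcal F_s\}\subseteq\{\mathcal G_s\}$ together with the PSD-monotonicity of the Gaussian conditional covariance under coarsening of the conditioning. Both can be sidestepped by working instead with the \emph{unconditional} covariance: by independence, $\mathrm{cov}(\mathbb B_t-\mathbb B_s)=\big(\sum_n\lambda_n^2|t-s|^{2H_n}\big)I_d\succeq\lambda_{n_0}^2|t-s|^{2H_{n_0}}I_d\succeq c_\xi|t-s|^{2\xi}I_d$ for every $\xi\in(0,1)$ (same choice of $n_0$ and same use of $|t-s|\le T$), after which Remark~\ref{rem:weaker LND} and Corollary~\ref{cor:cont in time of LT} give the conclusion directly, with $\mathcal N$ the union of the countably many null sets on which $\sup_{t\le T}\|L_t\|_{H^m}=\infty$ for some $m\in\NN$.
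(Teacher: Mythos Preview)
Your proof is correct and follows the same overall strategy as the paper: verify continuity from~\eqref{eq:cont of monster}, then establish $\zeta$-LND for every $\zeta\in(0,1)$ by choosing an index $n_0$ with $H_{n_0}\le\zeta$ and bounding the conditional covariance of $\mathbb{B}$ from below by the single term coming from $B^{H_{n_0}}$.

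The implementation differs slightly. The paper fixes an explicit Mandelbrot--Van~Ness representation $B^{H_n}_t=2H_n\int_{-\infty}^t[(t-r)_+^{H_n-1/2}-(-r)_+^{H_n-1/2}]\,dB^n_r$ with independent driving Brownian motions $(B^n)$, works in the filtration generated by the $(B^n)$ (to which $\mathbb{B}$ is adapted, so this is legitimate for the stochastic sewing argument of Theorem~\ref{thm: regualritry of avg op}), and computes $\mathrm{cov}(\mathbb{B}_t\mid\mathcal F_s)$ directly as $\sum_n\lambda_n^2(t-s)^{2H_n}I_d$ (up to normalization). You instead treat the LND of each $B^{H_n}$ as a black box from~\cite{Pitt78,Xiao2006}, and you are more explicit than the paper about the filtration issue: you distinguish the natural filtration $\{\mathcal F_s\}$ of $\mathbb{B}$ from the larger filtration $\{\mathcal G_s\}$ generated by all the $B^{H_n}$, and you invoke PSD-monotonicity of Gaussian conditional covariances under coarsening. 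This is a genuine point the paper leaves implicit. Your alternative route through the unconditional increment covariance and Remark~\ref{rem:weaker LND} is also valid and arguably the cleanest path, since $\mathrm{cov}(\mathbb B_t-\mathbb B_s)=\sum_n\lambda_n^2|t-s|^{2H_n}I_d$ requires no representation and no filtration bookkeeping at all.
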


\begin{proof}
We assume that $B^{H_n}$ is given as the Wiener-It\^{o} integral 
\begin{equation}
    B_t^{H_n}:=2H_n \int_{-\infty} ^t [(t-s)^{H_n-\frac{1}{2}}_+-(-s)^{H_n-\frac
    {1}{2}}_+]dB^n_r, 
\end{equation}
where $\left(B^n\right)_{n\in \mathbb{N}}$
 is a sequence of independent $\mathbb{R}^d$-valued two-sided Brownian motions and for convenience we chose the normalizing factor $2H_n$ instead of the usual $\Gamma(H_n + \frac12)^{-1}$. Since $\sum_{n\geq0}|\lambda_{n}|\mathbb{E}\left[\sup_{0\leq s\leq1}|B_{s}^{H_{n}}|\right]<\infty$ the process $\mathbb B$ is almost surely the uniform limit of continuous functions and therefore continuous itself. So to conclude the proof it suffices to show that $\mathbb B$ is $\zeta$-LND for any $\zeta>0$. The processes $B^{H_n}$ and $B^{H_m}$ are independent for $m\neq n$, and thus the conditional covariance is
\begin{equation}
    \mathrm{cov}(\BB_t|\cF_s)=\sum_{n\in \NN}  \lambda_n^2(t-s)^{2H_n} I_d. 
\end{equation}
Since $(H_n)_{n\in \NN }$ is a null sequence there exists $m$ such that $H_m < \zeta$, and then
\[
	\frac{\sum_n \lambda_n^2(t-s)^{2H_n}}{(t-s)^{2\zeta}} \ge \lambda_m^2 T^{2(H_m - \zeta)} > 0,
\]
where we used that $\lambda_m > 0$. This concludes the proof.
\end{proof}

\section{Path-wise construction of infinitely regularizing averaging operators}\label{sec:regularzing operators}

In this section we investigate the spatio-temporal regularity of ``averaging operators''. For a continuous path $w \in C([0,T], \mathbb R^d)$ and a measurable function $b:\mathbb R^d\rightarrow \mathbb{R}^d $, we define the averaging operator $T^{w}$ as
\begin{equation}\label{eq: avg op def}
T_{s,t}^{w}b\left(x\right):=\int_{s}^{t}b\left(x+w_{r}\right)dr.
\end{equation}
Such operators have previously 
been studied by Tao and Wright in \cite{TaoWri} in the case of deterministic
perturbations $w$, and more recently by Catellier and Gubinelli \cite{Catellier2016} in their study of the regularizing effect
of fractional Brownian motions on ODEs.

Our first result is that if $w$ is infinitely regularizing according to Definition \ref{def:infinitely-regularizing}, then the averaging operator $T^w$  can be uniquely extended to any $b \in \mathscr S'$.

Recall that $\cC^\alpha(\langle x \rangle^{-\kappa}))$ denotes the weighted Besov space of Definition \ref{def:Besov space}, with weight $\langle x\rangle^{-\kappa}:=(1+|x|^2)^{-\frac{\kappa}{2}}$

\begin{prop}
\label{prop:Average operator}
Let $w \in C([0,T],\mathbb R^d)$ be infinitely regularizing and let $b \in \mathscr S'$.  There exist  a $\kappa \in \mathbb R$ depending on $b$ and a unique function
\[
	T^wb \in \bigcap_{\substack{\gamma \in (0,1), \\ \alpha > 0}} C^\gamma([0,T], \mathcal C^\alpha(\langle x \rangle^{-\kappa}))
\]
such that $T_0^wb \equiv 0$, and with the property that for any sequence of continuous functions $(b_n)_{n\in \mathbb{N}} \subset C(\mathbb R^d) \cap \mathscr S'$ that converges to $b$ in $\mathscr S'$ we have
\[
	\lim_{n \to \infty} \| T^wb - T^wb_n \|_{C^\gamma([0,T],\mathcal C^\alpha(\langle x \rangle^{-\lambda}))} = 0
\]
for some $\lambda \in \mathbb R$ and all $\gamma \in (0,1)$, $\alpha > 0$.
\end{prop}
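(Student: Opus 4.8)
The plan is to build $T^wb$ by a density argument, using the infinitely regularizing property of $w$ to get estimates that only depend on a fixed Sobolev-type norm of the test functions. First I would fix a dyadic partition of unity and observe that for a continuous compactly Fourier-supported (or Schwartz) function $b_n$, the definition \eqref{eq: avg op def} gives $T_{s,t}^wb_n(x) = \int_s^t b_n(x+w_r)\,dr = [b_n \ast L_{s,t}(-\cdot)](x)$ by the defining relation \eqref{eq:local time rel} of the local time. Since $w$ is infinitely regularizing, $L \in C^\gamma_T\mathcal C^\beta$ for every $\gamma \in (\tfrac12,1)$ and every $\beta \in \mathbb R$; combining this with Young's convolution inequality for Besov spaces, one gets for every $\alpha \in \mathbb R$ a bound of the form
\[
	\|T^w_{s,t}b_n\|_{\mathcal C^{\alpha}} \lesssim \|L_{s,t}\|_{\mathcal C^{\alpha - s_0}}\,\|b_n\|_{\mathcal C^{s_0}} \lesssim |t-s|^\gamma \|b_n\|_{\mathcal C^{s_0}}
\]
for a suitable auxiliary smoothness $s_0$. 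The key point is that the right-hand side controls $T^wb_n$ in terms of a \emph{single} Besov norm of $b_n$, with a constant depending only on $w$.

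The second step is to introduce the weight. A general $b \in \mathscr S'$ lies in some weighted Besov space $\mathcal C^{-m}(\langle x\rangle^{-\kappa})$ for suitable $m,\kappa>0$ (this is a standard fact: every tempered distribution has finite order and polynomial growth). To propagate the weight through the convolution I would use the elementary inequality $\langle x \rangle^{-\kappa} \lesssim \langle x - y\rangle^{-\kappa}\langle y\rangle^{\kappa}$ together with the uniform-in-time bound $\sup_{r\in[0,T]}|w_r| < \infty$ (here continuity of $w$ on the compact interval is essential), so that convolution against $\delta(\cdot - w_r)$ only shifts the weight by a bounded amount; integrating in $r$ and invoking the Besov-weight characterization from the Triebel remark gives
\[
	\|T^w_{s,t}b_n\|_{\mathcal C^\alpha(\langle x\rangle^{-\kappa})} \lesssim |t-s|^\gamma\, \|b_n\|_{\mathcal C^{s_0}(\langle x\rangle^{-\kappa'})}
\]
for some $\kappa' \ge \kappa$ depending only on $\kappa$ and on $\sup_r|w_r|$. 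In particular, for any sequence $b_n \to b$ in $\mathscr S'$ one can arrange (by mollification and truncation) that $b_n$ is Cauchy in a fixed $\mathcal C^{s_0}(\langle x\rangle^{-\kappa'})$, hence $(T^wb_n)$ is Cauchy in $C^\gamma([0,T],\mathcal C^\alpha(\langle x\rangle^{-\kappa}))$ for every $\gamma \in (\tfrac12,1)$ and $\alpha>0$; its limit is the desired $T^wb$, and $T^w_0b \equiv 0$ is preserved. Uniqueness follows because any two candidate limits agree after testing against Schwartz functions, using that convergence in $\mathscr S'$ of the $b_n$ pins down the limit, and density of $C(\mathbb R^d)\cap\mathscr S'$ in $\mathscr S'$ shows the construction is independent of the approximating sequence (two sequences can be interleaved). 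Finally, to get \emph{all} $\gamma \in (0,1)$ and not just $\gamma > \tfrac12$, one interpolates between the $C^\gamma$ bound for $\gamma \in (\tfrac12,1)$ and a trivial $C^0$ (or $C^1$) bound, exactly as in the proof of Corollary~\ref{cor:cont in time of LT}.

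The main obstacle I expect is the bookkeeping of the weights: one must check that the exponent $\kappa$ in the target space can be taken \emph{independent of} the approximating sequence and of $\alpha,\gamma$, while the exponent $\kappa'$ (resp.\ $\lambda$ in the statement) in the space where the $b_n$ converge is allowed to be larger; this requires being careful that mollification does not worsen the weight and that truncation to compact support is done at a rate compatible with the polynomial growth of $b$. A secondary technical point is justifying the identity $T^w_{s,t}b_n = b_n \ast L_{s,t}(-\cdot)$ and the Young inequality at the level of Besov spaces of negative regularity for the $b_n$ (which is fine since $b_n$ is genuinely a function, so one can first work with $b_n$ smooth and then pass to the limit within the class of continuous functions), and checking that the $\mathscr S'$-convergence $b_n \to b$ can always be upgraded to convergence in some fixed weighted Besov space — this is where one invokes that bounded subsets of $\mathscr S'$ lie in a common $\mathcal C^{-m}(\langle x\rangle^{-\kappa})$.
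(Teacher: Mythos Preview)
Your approach is correct in outline but takes a different route from the paper. The paper does \emph{not} construct $T^wb$ by density. Its key observation is that since $w$ is continuous on the compact interval $[0,T]$, the local time $L_{s,t}$ is not only smooth but also \emph{compactly supported}; hence $L_{s,t}(\cdot-x)\in C^\infty_c\subset\mathscr S$ for every $x$, and the paper simply \emph{defines}
\[
T^w_{s,t}b(x):=\langle b,\,L_{s,t}(\cdot-x)\rangle
\]
directly for any $b\in\mathscr S'$. The weighted regularity then drops out of the structure theorem for tempered distributions: there exist $\kappa,k$ with $|\langle b,\varphi\rangle|\lesssim\max_{|\alpha|\le k}\|\langle\cdot\rangle^\kappa\partial^\alpha\varphi\|_\infty$, and plugging in $\varphi=L_{s,t}(\cdot-x)$ together with Peetre's inequality and the equivalence of weighted and unweighted norms for the compactly supported $L$ gives $|\partial^\beta T^w_{s,t}b(x)|\lesssim\langle x\rangle^\kappa|t-s|^\gamma$ for every multi-index $\beta$. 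No approximation, no weighted Young inequality, no interpolation for the time regularity are needed for the definition or the estimate.

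Your density route via $T^w_{s,t}b_n=b_n\ast L_{s,t}(-\cdot)$ and weighted convolution bounds is viable, but one of the steps you flag as ``secondary'' is not quite justified as stated: knowing that bounded subsets of $\mathscr S'$ lie in a common $\mathcal C^{-m}(\langle x\rangle^{-\kappa})$ gives only a uniform bound, not convergence. To upgrade $\mathscr S'$-convergence of an \emph{arbitrary} sequence $b_n\to b$ to norm convergence in some $\mathcal C^{-m'}(\langle x\rangle^{-\kappa'})$ with $m'>m$, $\kappa'>\kappa$, you need in addition either a weighted compact-embedding argument or the fact that $\mathscr S'$-convergence is uniform on precompact subsets of $\mathscr S$ (applied to the family $\{K_j(x-\cdot):j\le J,\ |x|\le R\}$). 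This is doable but is real work you have not yet supplied. The paper sidesteps the issue because its $T^wb$ is already defined; the convergence of $T^wb_n$ then follows from pointwise convergence $\langle b_n,L_{s,t}(\cdot-x)\rangle\to\langle b,L_{s,t}(\cdot-x)\rangle$ combined with the uniform seminorm bound on the $b_n$ (the Reed--Simon reference), which is the same equicontinuity input but used more economically. What the paper's approach buys is a one-line definition and a transparent origin for the weight; what yours would buy, once completed, is a construction that does not rely on the compact support of $L$ and could adapt to unbounded paths.
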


\begin{proof}
	If $b$ is continuous, then
	\[
		T_{s,t}^w b (x) = \int_s^t b(x + w_r) dr = \int_{\mathbb R^d} b(x+z) L_{s,t}(z) dz = \langle b, L_{s,t}(\cdot - x)\rangle,
	\]
	where $L$ is the local time associated to $w$. 
	Since $L_{s,t}(\cdot - x) \in C^\infty_c(\mathbb R^d)$, the right hand side makes sense for all $b \in \mathscr S'$ and we take it as the definition of $T^b_{s,t}(x)$.
	 To see the claimed regularity note that for $b \in \mathscr S'$ there exist $\kappa \in \mathbb R$ and $k \ge 0$ such that $|\langle b, \varphi \rangle| \lesssim \max_{|\alpha| \le k} \|\langle \cdot \rangle^{\kappa} \partial^\alpha \varphi \|_{\infty}$. In particular,
	\begin{align*}
		|T_{s,t}^b(x)| = |\langle b, L_{s,t}(\cdot - x)\rangle| & \lesssim \max_{|\alpha| \le k} \|\langle \cdot \rangle^{\kappa} \partial^\alpha L_{s,t}(\cdot - x) \|_{\infty} \\
		& \lesssim \sup_{z \in \mathbb R^d} \frac{\langle z \rangle^\kappa}{\langle z-x\rangle^\kappa} |t-s|^\gamma \| L \|_{C^\gamma([0,T],\mathcal C^{k+1}(\langle x \rangle^{\kappa}))} \\
		& \lesssim \langle x \rangle^\kappa |t-s|^\gamma,
	\end{align*}
	 where we used that  $\langle x\rangle =(1+|x|^2)^{\frac{1}{2}}$ and that $\| L \|_{C^\gamma([0,T],\mathcal C^{k+1}(\langle x \rangle^{\kappa}))} \simeq \| L \|_{C^\gamma([0,T],\mathcal C^{k+1})}$ as $L$ is compactly supported. To control the derivatives note that $T_{s,t}^wb$ is essentially a convolution, and thus $\partial^\beta T_{s,t}^w b(x) = \langle b, (-1)^\beta (\partial^\beta L_{s,t})(\cdot - x)\rangle$, from where the same arguments as above yield
\begin{equation}\label{eq:weighted derivatve}
		|\partial^\beta T_{s,t}^w b(x)| \lesssim  \langle x \rangle^\kappa |t-s|^\gamma,
\end{equation}
	and therefore $T^w b \in C^\gamma([0,T], \mathcal C^\alpha(\langle x \rangle^{-\kappa}))$ for all $\alpha > 0$ and all $\gamma \in (0,1)$.
	If a sequence of smooth functions $(b_n)_{n\in \mathbb{N}} \subset \mathscr S'$ converges to $b$ in $\mathscr S'$, then there exist $\lambda \in \mathbb R$ 
	and $\ell \ge 0$ such that $|\langle b_n, \varphi \rangle| \lesssim \max_{|\alpha| \le \ell} \|\langle \cdot \rangle^{\lambda} \partial^\alpha \varphi \|_{\infty}$ uniformly in $n$, 
	see \cite[Theorem~V.7]{Reed1980}. Therefore, the convergence of $T^w b_n$ to $T^w b$ in $C^\gamma([0,T], \mathcal C^\alpha(\langle x \rangle^{-\lambda}))$ follows as above.
\end{proof}

\begin{cor}\label{cor:Besov drift}
	If $b \in B^\beta_{p,q}$ for some $\beta \in \mathbb R$ and $p,q \in [1,\infty]$, then we have (without weights):
	\[
		T^w b \in \bigcap_{\substack{\gamma \in (0,1), \\ \alpha > 0}} C^\gamma([0,T], \mathcal C^\alpha).
	\]
\end{cor}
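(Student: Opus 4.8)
The plan is to revisit the proof of Proposition~\ref{prop:Average operator} and observe that the only place where weights entered was the polynomial growth bound $|\langle b, \varphi\rangle| \lesssim \max_{|\alpha| \le k} \|\langle \cdot \rangle^\kappa \partial^\alpha \varphi\|_\infty$ coming from the general Schwartz-distribution structure. When $b \in B^\beta_{p,q}$ we can instead test $b$ against $L_{s,t}(\cdot - x)$ using the duality of Besov spaces, which produces a bound uniform in $x$ and hence no weight. Concretely, for continuous $b$ we still have $T^w_{s,t}b(x) = \langle b, L_{s,t}(\cdot - x)\rangle$, and by the Besov duality pairing (see \cite[Proposition~2.76]{BahCheDan}) we get
\[
	|\langle b, L_{s,t}(\cdot - x)\rangle| \lesssim \|b\|_{B^\beta_{p,q}} \, \|L_{s,t}(\cdot - x)\|_{B^{-\beta}_{p',q'}},
\]
where $p',q'$ are the conjugate exponents. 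Since translation is an isometry on every $B^{s}_{p',q'}$, the right-hand side equals $\|b\|_{B^\beta_{p,q}} \|L_{s,t}\|_{B^{-\beta}_{p',q'}}$, which is independent of $x$. By the Besov embedding of Lemma~\ref{lem:(Besov-embedding)}, $\mathcal C^{\alpha_0} = B^{\alpha_0}_{\infty,\infty} \hookrightarrow B^{-\beta}_{p',q'}$ for $\alpha_0$ large enough, so using that $w$ is infinitely regularizing we obtain $\|L_{s,t}\|_{B^{-\beta}_{p',q'}} \lesssim \|L\|_{C^\gamma_T \mathcal C^{\alpha_0}} |t-s|^\gamma \lesssim |t-s|^\gamma$ for every $\gamma \in (0,1)$.

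Next I would handle the spatial derivatives exactly as in Proposition~\ref{prop:Average operator}: since $T^w_{s,t}b$ is a convolution of $b$ with $L_{s,t}$, one has $\partial^\beta_x T^w_{s,t}b(x) = (-1)^{|\beta|}\langle b, (\partial^\beta L_{s,t})(\cdot - x)\rangle$, and the same duality estimate applies with $\partial^\beta L_{s,t}$ in place of $L_{s,t}$; its $B^{-\beta}_{p',q'}$ norm is controlled by $\|L\|_{C^\gamma_T \mathcal C^{\alpha_0 + |\beta|}}|t-s|^\gamma$, again finite by the infinite regularizing property and again uniform in $x$. Summing (or taking suprema) over $|\beta| \le \lceil \alpha \rceil$ gives $\|T^w_{s,t}b\|_{\mathcal C^\alpha} \lesssim |t-s|^\gamma$ for all $\alpha > 0$, i.e. $T^w b \in C^\gamma([0,T],\mathcal C^\alpha)$ with no weight.

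Finally, the identification of this object with the $T^w b$ constructed in Proposition~\ref{prop:Average operator} is immediate, because the two definitions agree on continuous $b$ and $B^\beta_{p,q} \cap C(\mathbb R^d)$ is dense enough: taking a mollification $b_n = b \ast \varrho_n \to b$ in $\mathscr S'$, Proposition~\ref{prop:Average operator} gives convergence of $T^w b_n$ to $T^w b$ in some weighted space, while the estimate above (applied to $b_n - b_m$, which is bounded in $B^\beta_{p,q}$) shows $(T^w b_n)_n$ is Cauchy in the unweighted $C^\gamma([0,T],\mathcal C^\alpha)$; the limits must coincide. I do not expect a genuine obstacle here — the only point requiring a little care is choosing the Besov duality statement in the right generality (the endpoint cases $p$ or $q$ equal to $1$ or $\infty$, where one should pair $B^\beta_{p,q}$ with $B^{-\beta}_{p',q'}$ and note that $L_{s,t}$ is smooth and compactly supported so it lies in every Besov space), but this is routine given the tools already assembled in Section~\ref{beso spaces}.
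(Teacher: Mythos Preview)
Your approach is essentially the same as the paper's: use Besov duality $|\langle b, L_{s,t}(\cdot - x)\rangle| \lesssim \|b\|_{B^\beta_{p,q}} \|L_{s,t}\|_{B^{-\beta}_{p',q'}}$ (which is independent of $x$ by translation invariance), then feed this back into the proof of Proposition~\ref{prop:Average operator} with $\kappa = 0$.

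There is, however, one incorrect step. You invoke Lemma~\ref{lem:(Besov-embedding)} to obtain $\mathcal C^{\alpha_0} = B^{\alpha_0}_{\infty,\infty} \hookrightarrow B^{-\beta}_{p',q'}$, but that embedding goes the wrong way: the lemma requires $p_1 \le p_2$, so it lets you \emph{increase} the integrability index, not decrease it from $\infty$ to $p'$. Indeed, nonzero constants lie in every $\mathcal C^{\alpha_0}$ but in no $B^{-\beta}_{p',q'}$ with $p' < \infty$. The paper handles this by exploiting the compact support of $L_{s,t}$: since $\mathrm{supp}\, L_{s,t}$ lies in a fixed ball (the range of $w$), one may pass from $L^\infty$ to $L^{p'}$ at the cost of a constant, and after giving up $\epsilon$ regularity to go from $q'$ to $\infty$ one obtains $\|L_{s,t}\|_{B^{-\beta}_{p',q'}} \lesssim \|L_{s,t}\|_{\mathcal C^{-\beta + \epsilon}}$. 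You already note in your last paragraph that $L_{s,t}$ is smooth and compactly supported, so the repair is at hand; it just needs to replace the erroneous embedding.
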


\begin{proof}
	If $b \in B^\beta_{p,q}$, then we have with the conjugate exponents $p',q'$ of $p,q$:
	\[
		|\langle b, L_{s,t}(\cdot - x)\rangle| \lesssim \| b \|_{B^\beta_{p,q}} \| L_{s,t}(\cdot - x) \|_{B^{-\beta}_{p',q'}} =  \| b \|_{B^\beta_{p,q}} \| L_{s,t} \|_{B^{-\beta}_{p',q'}} \lesssim \| L_{s,t} \|_{\mathcal C^{-\beta+\epsilon}},
	\]
	where in the last step we used that $L$ is compactly supported and therefore we can decrease the integrability index from  $\infty$ to $p'$ while only paying a constant, and that we can replace $q'$ by $\infty$ if we give up $\epsilon$ regularity. This shows that we can take $\kappa = 0$ in the proof (and then in the statement) of Proposition~\ref{prop:Average operator}.
\end{proof}

\section{Existence, uniqueness and flow differentiability of perturbed ODEs }\label{sec:ex a Uni}

We are now ready to apply the concept of averaging operators  to ODEs perturbed by noise. Formally,  we will consider 
the equation 
\begin{equation}
\tilde{y}_{t}^{x}=x+\int_{0}^{t}b\left(\tilde{y}^x_{r}\right)dr+w_{t},\qquad (t,x)\in[0,T] \times \mathbb{R}^{d},\label{eq:ode hat}
\end{equation}
for a Schwartz distribution $b$ and an infinitely regularizing continuous path $w$. To interpret this equation rigorously, we set $y_{t}^{x}:=\tilde{y}_{t}^{x}-w_{t},$
and observe that $y$ formally solves
\begin{equation}
y_{t}^{x}=x+\int_{0}^{t}b\left(y_{r}^x+w_{r}\right)dr,\qquad (t,x)\in [0,T] \times \mathbb{R}^{d}.\label{eq:our ode}
\end{equation}
To make sense of the integral on the right hand side we consider a sequence $(b_n)_{n\in \mathbb{N}}$ of continuous functions converging to $b\in \mathscr{S}^\prime$. Then, inspired by the construction of the operator $T^w b$ in Proposition \ref{prop:Average operator},  we will show that the following limit exists:
\begin{equation}
\int_{0}^{t}b\left(y_{r}+w_{r}\right)dr:=\lim_{n\rightarrow \mathbb{N}} \int_{0}^{t}b_n\left(y_{r}+w_{r}\right)dr.
\end{equation}
 To this end we use the non-linear Young integral of~\cite{Catellier2016}, for which we first give a simplified construction.

\subsection{Non-linear Young integration }

Let $\varXi:\Delta^T_2 \to \RR^d$ and consider the Riemann sum of $\Xi$ over a partition $\mathcal{P}$ of
a set $\left[s,t\right]\subset\left[0,T\right]$:
\[
\mathcal{I}_{\mathcal{P}}\left(\varXi\right)_{s,t}=\sum_{\left[u,v\right]\in\mathcal{P}}\varXi_{u,v}.
\]
The \emph{sewing lemma} (\cite[Proposition~1]{Gubinelli2004}, see also \cite[Lemma~4.2]{FriHai}) gives explicit conditions on the function $\varXi$ under which $\lim_{|\mathcal{P}|\rightarrow0}\mathcal{I}_{\mathcal{P}}\left(\varXi\right)$ exists. To state it, we first define the linear functional $\delta$ acting on
$f: \Delta^T_2 \to \RR^d$ as 
\begin{equation}
\delta_{u}f_{s,t}=f_{s,t}-f_{s,u}-f_{u,t}, \qquad (s,u,t) \in \Delta^T_3. \label{eq:elta}
\end{equation}

\begin{lem}[\cite{FriHai}, Lemma~4.2]
\label{lem:Non-Linear Young integral}Let $\beta\in\left(1,\infty\right)$, and let  $\varXi:\Delta_2^T\rightarrow\mathbb{R}^{d}$
be such that
\begin{align*}
\|\delta \varXi\|_\beta := \sup_{(s,u,t) \in \Delta^T_3} \frac{|\delta_{u}\varXi_{s,t}|}{|t-s|^{\beta}}<\infty.
\end{align*}
 Then there exists a unique function $\mathcal{I}(\varXi):[0,T]\to\mathbb{R}^d$ such that $\mathcal I(\varXi)_0=0$ and
\begin{equation}\label{eq:abs bound}
|\mathcal{I}\left(\varXi\right)_{s,t}-\varXi_{s,t}|\leq C\|\delta\varXi\|_{\beta}|t-s|^{\beta},
\end{equation}
where $C>0$ only depends on $\beta$ and $T$. Moreover, we have $\mathcal{I}\left(\varXi\right)_{0,t}=\lim_{|\mathcal{P}|\rightarrow0}\sum_{\left[u,v\right]\in\mathcal{P}}\varXi_{u,v}$. If additionally for $\alpha\in\left(0,1\right)$
\[
	\|\varXi\|_\alpha:=\sup_{(s,t) \in \Delta^T_2} \frac{|\varXi_{s,t}|}{|t-s|^{\alpha}}<\infty,
\]
then $\mathcal{I}(\varXi) \in C^\alpha([0,T],\mathbb{R}^d)$.
\end{lem}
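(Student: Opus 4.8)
The plan is to give the classical sewing lemma argument (this is precisely \cite[Proposition~1]{Gubinelli2004} and \cite[Lemma~4.2]{FriHai}, which I would recall for the reader's convenience). The backbone is a \emph{uniform-in-partition estimate} for the Riemann sums, obtained by successively deleting points from a partition. Fix $(s,t) \in \Delta_2^T$ and a partition $\mathcal P = \{s = u_0 < \dots < u_N = t\}$ with $N \ge 2$. A short pigeonhole computation shows $\sum_{i=1}^{N-1}(u_{i+1}-u_{i-1}) \le 2(t-s)$, so there is an interior point $u_i$ with $u_{i+1} - u_{i-1} \le 2(t-s)/(N-1)$; removing it changes the Riemann sum by exactly $-\delta_{u_i}\varXi_{u_{i-1},u_{i+1}}$, whose modulus is at most $\|\delta\varXi\|_\beta \bigl(2(t-s)/(N-1)\bigr)^\beta$ by the hypothesis on $\delta\varXi$ (see \eqref{eq:elta}). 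Iterating down to the trivial partition $\{s,t\}$, which gives $\varXi_{s,t}$, and summing the resulting bound over the number of intervals yields
\[
	\bigl| \mathcal I_{\mathcal P}(\varXi)_{s,t} - \varXi_{s,t} \bigr| \;\le\; 2^\beta \|\delta\varXi\|_\beta (t-s)^\beta \sum_{j \ge 1} j^{-\beta} \;=:\; C \|\delta\varXi\|_\beta (t-s)^\beta,
\]
with $C = C(\beta)$ finite because $\beta > 1$; in particular this bound is uniform over all partitions $\mathcal P$ of $[s,t]$. (Note also that taking $s = u = t$ in the hypothesis forces $\varXi_{s,s} = 0$.)

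The next step is to pass to the limit $|\mathcal P| \to 0$. If $\mathcal P'$ refines $\mathcal P$, then restricting $\mathcal P'$ to each $[u,v] \in \mathcal P$ and applying the uniform estimate on $[u,v]$ gives $|\mathcal I_{\mathcal P'}(\varXi)_{s,t} - \mathcal I_{\mathcal P}(\varXi)_{s,t}| \le C\|\delta\varXi\|_\beta \sum_{[u,v]\in\mathcal P}|v-u|^\beta \le C\|\delta\varXi\|_\beta|\mathcal P|^{\beta-1}(t-s)$. Comparing any two partitions to their common refinement, $(\mathcal I_{\mathcal P}(\varXi)_{s,t})$ is Cauchy as $|\mathcal P|\to 0$; call the limit $\mathcal I(\varXi)_{s,t}$, and passing to the limit in the uniform estimate gives \eqref{eq:abs bound}. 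Splitting a partition of $[s,t]$ at an intermediate point $u$ shows $\mathcal I(\varXi)_{s,t} = \mathcal I(\varXi)_{s,u} + \mathcal I(\varXi)_{u,t}$, so setting $\mathcal I(\varXi)_t := \mathcal I(\varXi)_{0,t}$ (with $\mathcal I(\varXi)_0 = 0$) we indeed have $\mathcal I(\varXi)_{s,t} = \mathcal I(\varXi)_t - \mathcal I(\varXi)_s$, and by construction $\mathcal I(\varXi)_{0,t} = \lim_{|\mathcal P|\to 0}\sum_{[u,v]\in\mathcal P}\varXi_{u,v}$. For uniqueness, if $\mathcal J$ also satisfies the conclusion, then $h_t := \mathcal I(\varXi)_t - \mathcal J_t$ has $h_0 = 0$, $h_{s,t} := h_t - h_s$ is additive, and $|h_{s,t}| \le 2C\|\delta\varXi\|_\beta|t-s|^\beta$; subdividing $[s,t]$ into $N$ equal pieces gives $|h_{s,t}| \le 2C\|\delta\varXi\|_\beta (t-s)^\beta N^{1-\beta} \to 0$, so $h \equiv 0$.

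Finally, under the additional assumption $\|\varXi\|_\alpha < \infty$ with $\alpha \in (0,1)$, the triangle inequality together with \eqref{eq:abs bound} gives $|\mathcal I(\varXi)_{s,t}| \le C\|\delta\varXi\|_\beta |t-s|^\beta + \|\varXi\|_\alpha|t-s|^\alpha$, and since $|t-s| \le T$ and $\beta > 1 > \alpha$ the first term is dominated by a constant (depending on $\beta$ and $T$) times $|t-s|^\alpha$; hence $\mathcal I(\varXi) \in C^\alpha([0,T],\mathbb R^d)$. As for the main difficulty: the result is classical and there is no genuine obstacle, the only points requiring a little care being the pigeonhole-and-summation bookkeeping in the first step, and ordering the argument so that convergence of the Riemann sums (via common refinements) is established \emph{before} additivity of $\mathcal I(\varXi)$ is invoked, rather than the other way around.
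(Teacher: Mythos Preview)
Your proof is correct and is precisely the classical sewing lemma argument from the references the paper cites (\cite{Gubinelli2004}, \cite{FriHai}); the paper itself does not give a proof of this lemma but only states it with attribution. One cosmetic point: removing $u_i$ changes the Riemann sum by $+\delta_{u_i}\varXi_{u_{i-1},u_{i+1}}$ rather than $-\delta_{u_i}\varXi_{u_{i-1},u_{i+1}}$ under the convention \eqref{eq:elta}, but this is irrelevant for the modulus bound.
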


Now let us consider again the integral in \eqref{eq:our ode}. 
If $b$ is continuous, then
\begin{align} \notag
\int_{0}^{t}b\left(y_{r}+w_{r}\right)dr & =\lim_{|\mathcal{P}|\rightarrow0}\sum_{\left[u,v\right]\in\mathcal{P}}b\left(y_{u}+w_{u}\right)\left(v-u\right)
\label{eq:gen young int}
=\lim_{|\mathcal{P}|\rightarrow0}\sum_{\left[u,v\right]\in\mathcal{P}}\int_{u}^{v}b\left(y_{u}+w_{r}\right)du\\
& = \lim_{|\mathcal{P}|\rightarrow0}\sum_{\left[u,v\right]\in\mathcal{P}}T^w _{v,u} b\left(y_{u}\right),
\end{align}
where $T^w b$ is the averaging operator from \eqref{eq: avg op def}. If $w$ is infinitely regularizing and $\mathcal P$ is a fixed partition, then by Proposition~\ref{prop:Average operator} the sum on the right hand side is well defined even if only $b \in \mathscr S'$. The existence of the limit as $|\mathcal P| \to 0$ will follow from the sewing lemma. Note that the limit is not exactly a Young integral, since $T^w b$ is non-linear in its spatial argument:
\[
T_{s,t}^{w}b \left(z+y\right)\neq T_{s,t}^{w}b\left(z\right)+T_{s,t}^{w}b\left(y\right).
\]
Therefore, we need a non-linear extension of the Young integral, which we recite from \cite{Catellier2016} in the following proposition.

\begin{prop}[See also \cite{Catellier2016}, Theorem~2.4 or \cite{Hu2017}, Proposition~2.4]
\label{prop:application of non-linear sew leemm}
Let $\beta, \gamma \in (0,1)$ be such that $\beta + \gamma > 1$. Let $y\in C^\beta_T := C^{\beta}_T\RR^d$  and let $Y:[0,T]\times \RR^d\rightarrow  \RR^d$ be such that
\begin{equation}\label{eq:cond for Y}
	|Y_{s,t}(x)|+|\nabla Y_{s,t}(x)| \le F(x) |t-s|^\gamma,\qquad(s,t) \in \Delta^2_T, x \in \RR^d,
\end{equation}
where $F$ is a locally bounded function.
Then with $\varXi_{s,t}=Y_{s,t}(y_s)$, the integral 
\begin{equation}
\int_{s}^{t}Y_{dr}\left(y_{r}\right):=\mathcal{I}\left(\varXi\right)_{s,t},\label{eq:def of non linear integral}
\end{equation}
 is well defined according to Lemma \ref{lem:Non-Linear Young integral}. Moreover, let $\tilde{Y}:[0,T]\times \RR^d\rightarrow \RR^d $ be another function satisfying \eqref{eq:cond for Y} (for a potentially different function $F$). Then for any $0<s<t\leq T$ the following bound holds
 \begin{equation}\label{eq:stability of NLY integral}
 \Big|\int_{s}^t Y_{d r}(y_r)-\int_{s}^t\tilde{Y}_{d r}(y_r)\Big|\leq  C(1+\|y\|_\beta)\|Y-\tilde{Y}\|_{\gamma,1,\mathcal B_{\|y\|_\infty}},
 \end{equation}
 where for $K\in \RR_+$ we define
\begin{equation}\label{norm like}
\|Y-\tilde{Y}\|_{\gamma,1,\mathcal B_K}:=\sup_{\substack{ s\neq t\in [0,T] \\ |x|\leq K}} \frac{|(Y_{s,t}-\tilde{Y}_{s,t})(x)|}{|t-s|^\gamma}+\sup_{\substack{ s\neq t\in [0,T] \\ |x|\leq K}} \frac{|\nabla (Y_{s,t}-\tilde{Y}_{s,t})(x)|}{|t-s|^\gamma}.
\end{equation}

\end{prop}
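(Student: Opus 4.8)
The plan is to verify the hypotheses of the non-linear Young/sewing lemma (Lemma~\ref{lem:Non-Linear Young integral}) applied to the germ $\varXi_{s,t} := Y_{s,t}(y_s)$, and then to prove the stability estimate by comparing two such germs. First I would establish that $\varXi$ satisfies the two conditions of Lemma~\ref{lem:Non-Linear Young integral}: the $\alpha$-H\"older bound and the control on $\delta\varXi$. For the former, since $|Y_{s,t}(x)| \le F(x)|t-s|^\gamma$ and $y$ is continuous (hence $y([0,T])$ is compact and $F$ is bounded on it), we get $|\varXi_{s,t}| = |Y_{s,t}(y_s)| \lesssim |t-s|^\gamma$, so $\|\varXi\|_\gamma < \infty$. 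For $\delta\varXi$, the computation is the standard one for non-linear Young integrals:
\[
	\delta_u \varXi_{s,t} = Y_{s,t}(y_s) - Y_{s,u}(y_s) - Y_{u,t}(y_u) = Y_{u,t}(y_s) - Y_{u,t}(y_u),
\]
using the additivity $Y_{s,t} = Y_{s,u} + Y_{u,t}$ (which holds since $Y_{s,t} = Y_t - Y_s$). Then by the mean value theorem and the gradient bound in \eqref{eq:cond for Y},
\[
	|\delta_u \varXi_{s,t}| \le \sup_{\xi} |\nabla Y_{u,t}(\xi)| \, |y_s - y_u| \lesssim |t-u|^\gamma \|y\|_\beta |s-u|^\beta \le \|y\|_\beta |t-s|^{\beta+\gamma},
\]
where the supremum is over the segment between $y_s$ and $y_u$, which again lies in a fixed compact set so that $F$ is bounded there. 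Since $\beta + \gamma > 1$, Lemma~\ref{lem:Non-Linear Young integral} applies and yields a unique $\mathcal I(\varXi)$, so \eqref{eq:def of non linear integral} is well-defined; moreover the last assertion of that lemma gives $\mathcal I(\varXi) \in C^\gamma_T$.

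For the stability bound \eqref{eq:stability of NLY integral}, I would set $D_{s,t}(x) := Y_{s,t}(x) - \tilde Y_{s,t}(x)$ and apply the sewing lemma to the difference germ $\varXi_{s,t} - \tilde\varXi_{s,t} = D_{s,t}(y_s)$. Writing $K := \|y\|_\infty$ and using the definition \eqref{norm like} of $\|D\|_{\gamma,1,\mathcal B_K}$, the pointwise bound $|D_{s,t}(y_s)| \le \|D\|_{\gamma,1,\mathcal B_K}|t-s|^\gamma$ controls the germ itself, and the same mean-value computation as above gives
\[
	|\delta_u(D_{\cdot}(y_\cdot))_{s,t}| = |D_{u,t}(y_s) - D_{u,t}(y_u)| \le \sup_{|x|\le K}|\nabla D_{u,t}(x)| |y_s - y_u| \le \|D\|_{\gamma,1,\mathcal B_K} \|y\|_\beta |t-s|^{\beta+\gamma}.
\]
Now the key point is that the integral of a germ can be estimated by the germ plus the correction term from \eqref{eq:abs bound}: since both $\int Y_{dr}(y_r)$ and $\int \tilde Y_{dr}(y_r)$ are constructed by the same sewing procedure and the construction is linear in the germ, $\int_s^t Y_{dr}(y_r) - \int_s^t \tilde Y_{dr}(y_r) = \mathcal I(D_\cdot(y_\cdot))_{s,t}$, and then \eqref{eq:abs bound} combined with the trivial bound $|D_{s,t}(y_s)|$ gives
\[
	\Big| \mathcal I(D_\cdot(y_\cdot))_{s,t}\Big| \le |D_{s,t}(y_s)| + C \|\delta(D_\cdot(y_\cdot))\|_{\beta+\gamma}|t-s|^{\beta+\gamma} \lesssim (1 + \|y\|_\beta)\|D\|_{\gamma,1,\mathcal B_K}|t-s|^{\beta+\gamma},
\]
using $|t-s| \le T$ to absorb the power $|t-s|^\gamma$ versus $|t-s|^{\beta+\gamma}$ into the constant; taking the values at a single pair $(s,t)$ with $s<t\le T$ and absorbing $T$-powers into $C$ yields \eqref{eq:stability of NLY integral}.

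The only genuine subtlety — rather than an obstacle — is bookkeeping the domain on which $F$ (and the analogous function for $\tilde Y$) needs to be bounded: all evaluations of $Y_{s,t}$, $\tilde Y_{s,t}$, and their gradients occur at points $y_r$ or on segments connecting such points, all of which lie in the closed ball $\overline{\mathcal B}_{\|y\|_\infty}$, so local boundedness of $F$ suffices and the implicit constants depend only on $\sup_{|x|\le\|y\|_\infty}F(x)$ (and the analogous supremum for $\tilde Y$), on $\beta,\gamma$, and on $T$. I would also note in passing that the additivity $Y_{s,t} + Y_{t,u}$... more precisely $Y_{s,u} = Y_{s,t} + Y_{t,u}$ — which is what makes $\delta_u\varXi_{s,t}$ collapse to a single difference — is implicit in the notation $Y_{s,t}$ being an increment of the form $Y_t - Y_s$; if instead $Y$ is only assumed to be a two-parameter germ one would additionally need $\delta_u Y_{s,t} = 0$, which should be recorded as part of the hypotheses, but under the increment convention it is automatic. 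Everything else is a direct application of Lemma~\ref{lem:Non-Linear Young integral}.
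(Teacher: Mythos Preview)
Your proposal is correct and follows essentially the same approach as the paper: both verify the sewing lemma hypotheses for $\varXi_{s,t}=Y_{s,t}(y_s)$ via the increment identity $\delta_u\varXi_{s,t}=Y_{u,t}(y_s)-Y_{u,t}(y_u)$ and a mean-value/fundamental-theorem-of-calculus bound, and then handle stability by applying the same analysis to the difference germ $D_{s,t}(y_s)=(Y_{s,t}-\tilde Y_{s,t})(y_s)$. The only cosmetic differences are that the paper writes the gradient bound via an explicit integral $\int_0^1\nabla(Y-\tilde Y)_{u,t}(y_s+\theta y_{s,u})\,d\theta\cdot y_{s,u}$ rather than invoking the mean value theorem, and your final displayed inequality has a harmless slip in the exponent (the $|t-s|^{\beta+\gamma}$ should be $|t-s|^{\gamma}$ before you absorb time powers into $C$).
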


\begin{proof}
Set $\varXi_{s,t}=Y_{s,t}(y_s)$. 
Since $Y_{s,t}=Y_{t}-Y_{s}$, we have 
\begin{align*}
	|\delta_{u}\varXi_{s,t}|=|Y_{u,t}(y_{s})-Y_{u,t}(y_{u})|\leq  \sup_{|x| \le \|y\|_\infty} F(x) |t-u|^{\gamma}|y_{s,u}|\leq\sup_{|x| \le \|y\|_\infty} F(x)\|y\|_{C^\beta_T}|t-s|^{\gamma+\beta}.
\end{align*}
 Together with the bound for $Y$ in \eqref{eq:cond for Y}, it follows from Lemma \ref{lem:Non-Linear Young integral} that the integral $\cI(\varXi)$ is well defined. Let us now prove  the stability inequality in \eqref{eq:stability of NLY integral}. 
To this end, set $\tilde{\varXi}_{s,t}:=Y_{s,t}(y_s)-\tilde{Y}_{s,t}(y_s)$, and observe that 
\begin{equation}
|\tilde{\varXi}_{s,t}|\leq \|Y-\tilde{Y}\|_{\gamma,1,\mathcal B_{\|y\|_\infty}}|t-s|^\gamma.
\end{equation}
 Elementary computations similar to the ones above yield
\begin{equation*}
\delta_{u}\tilde{\varXi}_{s,t}= Y_{u,t}(y_s)-\tilde{Y}_{u,t}(y_s)-Y_{u,t}(y_u)+\tilde{Y}_{u,t}(y_u)
\end{equation*}
By the fundamental theorem of calculus, we have that 
\begin{equation}
Y_{u,t}(y_s)-\tilde{Y}_{u,t}(y_s)-Y_{u,t}(y_u)+\tilde{Y}_{u,t}(y_u)=\int_0^1\nabla (Y-\tilde{Y})_{u,t}(y_s+\theta y_{s,u})d \theta y_{s,u}.  
\end{equation}
It therefore follows directly that 
\begin{equation}
|\delta_{u}\tilde{\varXi}_{s,t}|\leq \|y\|_{\beta} \|Y-\tilde{Y}\|_{\gamma,1,\mathcal B_{\|y\|_\infty}}|t-s|^{\gamma+\beta}.
\end{equation}
We conclude again from Lemma \ref{lem:Non-Linear Young integral} that $\cI(\tilde{\varXi})$ is well defined, and from \eqref{eq:abs bound} we conclude that \eqref{eq:stability of NLY integral} holds. 

\end{proof}

\subsection{Abstract non-linear Young equations}

Here we use the non-linear Young integral from Proposition \ref{prop:application of non-linear sew leemm} to construct solutions to an abstract non-linear integral equation. Later we will apply these abstract results to our equation~\eqref{eq:our ode}. 
\begin{prop}
\label{lem:Local existence and uniqueness}Let $Y \in C^{0,1}([0,T]\times\mathbb{R}^{d},\mathbb{R}^{d})$
be such that for some $\gamma\in(\frac{1}{2},1)$
and $\delta>\frac{1}{\gamma}$ the following conditions hold for $s,t\in\left[0,T\right]$ and $x,y\in\mathbb{R}^{d}$:
\begin{align*}
&{\rm (i)} & |Y_{s,t}\left(x\right)|+|\nabla Y_{s,t}\left(x\right)|\leq &G(x)|t-s|^{\gamma},
\\
&{\rm (ii)}&  |\nabla Y_{s,t}\left(x\right)-\nabla Y_{s,t}\left(y\right)| \leq& F\left(x,y\right)|t-s|^{\gamma}|x-y|^{\delta-1},
\end{align*}
where $G: \RR^d \to \RR_+$ and $F:\mathbb{R}^{2d}\rightarrow\mathbb{R}_{+}$
are locally bounded functions. Then for all $x\in\mathbb{R}^{d}$
there is a maximal existence time $T^{*} = T^\ast(x) \in\left(0,T\right] \cup\{\infty\}$
and a unique solution $y\in C^{\gamma}([0,T^{*})\cap[0,T])$
to
\begin{equation}\label{eq:abs oDE}
y_{t}=x+\int_{0}^{t}Y_{dr}\left(y_{r}\right).
\end{equation}
Here the non-linear Young integral $\int_{0}^{t}Y_{dr}\left(y_{r}\right)$
is as in Proposition \ref{prop:application of non-linear sew leemm}.
If $T^{*}<\infty$, then $\lim_{t\rightarrow T^{*}} |y_{t}|=\infty$. Moreover, the map $x\mapsto T^\ast(x)^{-1}$ is locally bounded. If $G$ and $F$ are bounded, then $T^\ast = \infty$.
\end{prop}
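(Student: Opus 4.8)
The plan is to set up a Picard-type fixed point argument on a small time interval, leveraging the stability estimate \eqref{eq:stability of NLY integral} from Proposition~\ref{prop:application of non-linear sew leemm} to get contraction, and then to patch solutions together to obtain a maximal existence time. First I would fix $x \in \mathbb R^d$ and a radius $R > 0$, and on a ball $\mathcal B_{R}$ around $x$ set $M := \sup_{|z| \le |x|+R}(G(z) + \ldots)$ using the local bounds in (i)--(ii). For $\tau \in (0,T]$ to be chosen, consider the map $\Gamma$ on a closed ball in $C^\gamma([0,\tau],\mathbb R^d)$ centered at the constant path $x$ (of radius $R$ in the appropriate seminorm, so that the path stays in $\mathcal B_R(x)$) defined by $\Gamma(y)_t := x + \int_0^t Y_{dr}(y_r)$. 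Using the absolute bound \eqref{eq:abs bound} in Lemma~\ref{lem:Non-Linear Young integral} together with the estimate $|\delta_u \varXi_{s,t}| \lesssim M (1 + \|y\|_{\gamma})|t-s|^{2\gamma}$ derived in the proof of Proposition~\ref{prop:application of non-linear sew leemm} (and $2\gamma > 1$), one checks $\Gamma$ maps the ball into itself for $\tau$ small, since all the resulting bounds carry a positive power of $\tau$.

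Next I would prove $\Gamma$ is a contraction. Given two paths $y, \bar y$ in the ball, I apply the non-linear Young stability bound: I would estimate $\varXi_{s,t} - \bar\varXi_{s,t} = Y_{s,t}(y_s) - Y_{s,t}(\bar y_s)$ and its $\delta$-increment. Here condition (ii) is what gives the needed Hölder-type control of $\nabla Y$ in its spatial argument, so that $\delta_u(\varXi - \bar\varXi)_{s,t}$ is bounded by a constant times $\|y - \bar y\|_{\gamma,\infty}$-type quantities with a factor $|t-s|^{\gamma + \gamma'}$ for some exponent exceeding $1$ (this is where $\delta > 1/\gamma$ enters, ensuring enough room in the Hölder exponents). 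Invoking \eqref{eq:abs bound} again yields $\|\Gamma(y) - \Gamma(\bar y)\|_{C^\gamma([0,\tau])} \le C(x,R)\, \tau^{\theta}\, \|y - \bar y\|_{C^\gamma([0,\tau])}$ for some $\theta > 0$, so shrinking $\tau$ gives a contraction and hence a unique local solution. I would note that $\tau$ can be chosen depending only on $|x|$ (and $R$) through the local sup bounds of $F, G$ on a fixed-size neighbourhood, which is the key to the "locally bounded $T^\ast(x)^{-1}$" claim.

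Then comes the standard continuation argument. Define $T^\ast(x)$ as the supremum of times up to which a (necessarily unique, by a gluing argument using uniqueness on overlapping subintervals) solution exists. If $T^\ast(x) < \infty$ and $\limsup_{t \uparrow T^\ast(x)} |y_t| < \infty$, then on a neighbourhood of the terminal time the solution stays in a fixed bounded set, so the local existence step applied at times close to $T^\ast(x)$ produces an extension by a fixed amount $\tau$ (depending only on that bounded set), contradicting maximality; hence $\lim_{t \uparrow T^\ast(x)}|y_t| = \infty$. Local boundedness of $x \mapsto T^\ast(x)^{-1}$ follows because for $x$ in a compact set $K$, the solution started at $x$ exists at least up to the uniform local time $\tau = \tau(K) > 0$ obtained from the sup of $F,G$ over a neighbourhood of $K$, so $T^\ast(x) \ge \tau(K)$, i.e. $T^\ast(x)^{-1} \le \tau(K)^{-1}$ on $K$. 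Finally, if $G$ and $F$ are globally bounded, the constants $C(x,R)$ above can be taken independent of $x$ and $R$, so the local existence time $\tau$ is uniform over all of $[0,T]$; then no explosion can occur in finite time and $T^\ast(x) = \infty$ for every $x$ (more precisely, one can also obtain a global a priori bound on $\|y\|_{C^\gamma([0,\tau])}$ on each step and iterate to cover $[0,T]$).

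The main obstacle I anticipate is organizing the interplay of the two Hölder/regularity exponents in the contraction estimate: one must track how $\gamma$ (time regularity of $Y$), $\beta = \gamma$ (time regularity of the candidate solution), and $\delta$ (spatial Hölder exponent of $\nabla Y$) combine so that the increment of $\varXi - \bar\varXi$ has time-exponent strictly above $1$ while keeping the dependence on $\|y - \bar y\|$ linear — this is precisely the role of the hypothesis $\delta > 1/\gamma$, and getting the bookkeeping right (especially making sure the constant depends only on local sup-norms of $F, G$ and on $\|y\|_\infty, \|y\|_\gamma$, which are themselves controlled on the small ball) is the delicate part. Everything else is a routine fixed-point-plus-continuation scheme.
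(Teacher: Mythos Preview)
Your strategy is the same as the paper's: a Picard fixed point on a small interval using the sewing bounds, followed by a standard continuation argument, with $\tau$ depending only on local sup-norms of $F,G$. One point, however, deserves correction. You claim that for the invariance step ``all the resulting bounds carry a positive power of $\tau$''. This is not true if you work directly in $C^\gamma$: the germ contribution $|\varXi_{s,t}|=|Y_{s,t}(y_s)|\le M|t-s|^\gamma$ gives exactly $M$ in the $C^\gamma$-seminorm, with no power of $\tau$ to spare. Invariance can still be arranged by first fixing the ball radius (say $R=2M$, with $M$ a local bound for $G$ on a neighbourhood of $x$ whose size is fixed in advance) and only then shrinking $\tau$, but your sketch glosses over this circularity between $R$ and $M$. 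The paper sidesteps the issue by running the contraction in $C^{\gamma'}$ for some $\gamma'\in(1-\gamma,\gamma)$ chosen so that $\gamma+(\delta-1)\gamma'>1$: then the germ picks up an extra factor $\tau^{\gamma-\gamma'}$, every bound in the invariance step genuinely carries a positive power of $\tau$, and one can take the ball $\{\,\|z\|_\infty\vee\|z\|_{C^{\gamma'}_\tau}\le 2|x|\,\}$ directly. The fixed point then lies in $C^{\gamma'}$, and one observes a posteriori that $\Gamma$ maps $C^{\gamma'}$ into $C^\gamma$, upgrading the regularity. Your contraction step and the continuation/blow-up/global-existence arguments are fine and match the paper.
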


\begin{proof}
This is quite standard and the result follows from an application of the non-linear
sewing lemma, Proposition~\ref{prop:application of non-linear sew leemm}, together with a Picard iteration. For completeness we include the arguments.

Let $\tau\in\left[0,T\right]$ and $\gamma^{\prime}\in\left(1-\gamma,\gamma\right)$ be such that $\gamma + \delta (1-\gamma') > 1$ (note that $\gamma + \delta(\gamma-1) = \delta \gamma > 1$, so this is possible). Let $z \in C_\tau^{\gamma'}$.
Define the increment $\varXi_{s,t}:=Y_{s,t}\left(z_{s}\right)$. Then we obtain from
{\rm(i)}:
\begin{equation}
\begin{aligned}
|\varXi_{s,t}| & \leq G(z_s) |t-s|^{\gamma} ,\label{eq:psi diff}\\
|\delta_{u}\varXi_{s,t}| & =|Y_{u,t}\left(z_{s}\right)-Y_{u,t}\left(z_{u}\right)|\leq \sup_{|a|\le \|z\|_\infty} G(a) \|z\|_{C_{\tau}^{\gamma^{\prime}}}|t-s|^{\gamma+\gamma^{\prime}},
\end{aligned}
\end{equation}
where $C_{\tau}^{\gamma^{\prime}} = C^{\gamma^{\prime}}\left(\left[0,\tau\right]\right).$ Since $\gamma+\gamma^{\prime}>1$ it follows from Lemma \ref{lem:Non-Linear Young integral}
that the map
\begin{gather*}
\Gamma :\left\{ z\in C^{\gamma^{\prime}}(\left[0,\tau\right],\mathbb{R}^{d})\big|z_{0}=x\right\} \rightarrow\left\{ z\in C^{\gamma^{\prime}}(\left[0,\tau\right],\mathbb{R}^{d})\big|z_0=x\right\}, \\
\Gamma\left(z\right)_{t}  =x+\int_{0}^{t}Y_{dr}\left(z_{r}\right)
\end{gather*}
is well defined and satisfies
\begin{align*}
|\Gamma(z)_{s,t}| & \leq\left|\int_{s}^{t}Y_{dr}\left(z_{r}\right)-Y_{s,t}\left(z_{s}\right)\right|+|Y_{s,t}\left(z_{s}\right)|\\
 & \lesssim|t-s|^{\gamma+\gamma^{\prime}} \sup_{|a| \le \|z\|_\infty} G(a)\|z\|_{C_{\tau}^{\gamma^{\prime}}}+\sup_{|a|\le \|z\|_\infty} G(a)|t-s|^{\gamma}\\
 & \lesssim\tau^{\gamma-\gamma^{\prime}}|t-s|^{\gamma^{\prime}} \sup_{|a|\le \|z \|_\infty} \left( G\left(a\right)\|z\|_{C_{\tau}^{\gamma^{\prime}}}+G(a) \right).
\end{align*}
This implies that for sufficiently small $\tau>0$ (depending on $|x|$)
the map $\Gamma$ leaves the ball 
\[
\mathcal{B}_{2|x|}=\left\{ z\in C_{\tau}^{\gamma^{\prime}}\big|z_{0}=x,\, \|z\|_\infty \vee \|z\|_{C_{\tau}^{\gamma^{\prime}}}\leq2|x|\right\} 
\]
invariant. Moreover, for two paths $z,\tilde{z}\in\mathcal{B}_{2|x|}$ we have
\[
|\Gamma\left(z\right)_{s,t}-\Gamma\left(\tilde{z}\right)_{s,t}|\leq \left|Y_{s,t}\left(z_{s}\right)-Y_{s,t}\left(\tilde{z}_{s}\right)\right| +\left|\int_{s}^{t}\left[Y_{dr}\left(z_{r}\right)-Y_{dr}\left(\tilde{z}_{r}\right)\right]-\left[Y_{s,t}\left(z_{s}\right)-Y_{s,t}\left(\tilde{z}_{s}\right)\right]\right|.
\]
For $u\in\left[s,t\right]$ we rewrite
\begin{equation}\label{delta Y inc}
\begin{aligned}
\delta_{u}\left[Y_{s,t}\left(z_{s}\right)-Y_{s,t}\left(\tilde{z}_{s}\right)\right]&= (Y_{u,t}(z_{s})-Y_{u,t}(\tilde{z}_{s})) -\left(Y_{u,t}\left(z_{u}\right)-Y_{u,t}\left(\tilde{z}_{u}\right)\right)
\\
&=\int_{0}^{1}\nabla Y_{u,t}\left(\tilde{z}_{s}+\lambda\left(z_{s}-\tilde{z}_{s}\right)\right) \cdot \left(z_{s}-\tilde{z}_{s}\right) d\lambda 
\\
&\quad -\int_{0}^{1}\nabla Y_{u,t}\left(\tilde{z}_{u}+\lambda\left(z_{u}-\tilde{z}_{u}\right)\right) \cdot \left(z_{u}-\tilde{z}_{u}\right) d\lambda
\end{aligned}
\end{equation}
Invoking condition ${\rm (ii)}$ on the function $Y$, we observe that 
\begin{align*}
 &|\delta_{u}\left[Y_{s,t}\left(z_{s}\right)-Y_{s,t}\left(\tilde{z}_{s}\right)\right]|
 \\
  &\leq \left|\int_{0}^{1} \left[ \nabla Y_{u,t}\left(\tilde{z}_{s}+\lambda\left(z_{s}-\tilde{z}_{s}\right)\right)- \nabla Y_{u,t}\left(\tilde{z}_{u}+\lambda\left(z_{u}-\tilde{z}_{u}\right)\right)\right] \cdot \left(z_{s}-\tilde{z}_{s}\right) d\lambda\right|
  \\
  &\quad + \left|\int_{0}^{1}\nabla Y_{u,t}\left(\tilde{z}_{s}+\lambda\left(z_{s}-\tilde{z}_{s}\right)\right) \cdot \left(z_{s}-\tilde{z}_{s}-z_{u}-\tilde{z}_{u}\right) d\lambda\right|
  \\
  & \leq  \int_{0}^{1}F\left(\tilde{z}_{s}+\lambda\left(z_{s}-\tilde{z}_{s}\right),\tilde{z}_{u}+\lambda\left(z_{u}-\tilde{z}_{u}\right)\right)|t-u|^{\gamma}
  \\
  &\hspace{30pt} \times  |\tilde{z}_{s}+\lambda\left(z_{s}-\tilde{z_{s}}\right)-\left(\tilde{z}_{u}+\lambda\left(z_{u}-\tilde{z_{u}}\right)\right)|^{\delta-1}d\lambda\|z-\tilde{z}\|_{\infty}\\
  &\quad +  \int_{0}^{1} G(\tilde{z}_{s}+\lambda(z_{s}-\tilde{z}_{s})) |t-u|^{\gamma} d\lambda|t-s|^{\gamma^{\prime}}\|z-\tilde{z}\|_{C_{\tau}^{\gamma^{\prime}}}\\
 & \lesssim  \|F\|_{\mathcal{B}_{2|x|}}|t-s|^{\gamma+\gamma^{\prime}\left(\delta-1\right)}|2x|^{\delta-1}\|z-\tilde{z}\|_{C_{\tau}^{\gamma^{\prime}}}
   +|t-s|^{\gamma+\gamma^{\prime}}\|G\|_{\mathcal{B}_{2|x|}}\|z-\tilde{z}\|_{C_{\tau}^{\gamma^{\prime}}},
\end{align*}
where
\[
	\|G\|_{\mathcal{B}_{2|x|}} = \sup_{|a| \le 2|x|} G(a) ,\qquad\|F\|_{\mathcal{B}_{2|x|}} = \sup_{|a|,|b| \le 2|x|} F(a,b).
\]
Recall that $\gamma+\left(\delta-1\right)\gamma^{\prime}>1$. Furthermore, the bound in ${\rm (i)}$ gives 
\[
|Y_{s,t}\left(z_{s}\right)-Y_{s,t}\left(\tilde{z}_{s}\right)|\leq\|G\|_{\mathcal{B}_{2|x|}}|t-s|^{\gamma}\|z-\tilde{z}\|_{\infty}\le \|G\|_{\mathcal{B}_{2|x|}}\tau^{\gamma^{\prime}}|t-s|^{\gamma}\|z-\tilde{z}\|_{C_{\tau}^{\gamma^{\prime}},}
\]
where we used that $z_{0}=\tilde{z}_{0}=x$, and therefore
$
\|z-\tilde{z}\|_{\infty} \le \tau^{\gamma'}\|z-\tilde{z}\|_{C_{\tau}^{\gamma'}}.
$
So after possibly further decreasing $\tau>0$, depending on $|x|$, we get
a contraction on $\mathcal{B}_{2|x|}$. Since the maximum possible choice for $\tau$ only depends on $|x|$ and it is bounded away from $0$ if $|x|$ is bounded, we can choose $\tau(x)$ such that the map $x \mapsto \tau(x)^{-1}$ is locally bounded.

 Moreover, it is a simple exercise
to check that for $z\in C_{\tau}^{\gamma'}$ we have $\Gamma\left(z\right)\in C_{\tau}^{\gamma}$, so the unique fixed point $\left(y_{t}\right)_{t\in\left[0,T\right]}$
is even $\gamma$-H\"older continuous. Now we can iterate the construction
and extend the solution to $\left[0,\tau+\tau^{\prime}\right]$ for
some $\tau^{\prime}\leq\tau$, etc. We just showed that $\tau(x)$ only depends on the size
of the initial condition $|x|$, so if we had $\sup_{t\in\left[0,T^{*}\right)}|y_{t}|<\infty$
and $T^{*}<T$, then we could extend the solution beyond $T^{*}$
and thus $T^{*}$ could not have been the maximal time of existence. Since $T^\ast > \tau$ the local boundedness of $x \mapsto T^\ast(x)^{-1}$ follows from that of $x \mapsto \tau(x)^{-1}$.

If $F$ and $G$ are bounded, then there exists a fixed $\tau>0$ such that for any starting point $x$ the map $\Gamma$ leaves the ball $\mathcal B_{2|x|}$ invariant. Therefore, the solution $y$ with initial value $x$ satisfies  $\sup_{t \in [0,\sigma]}|y_\sigma| \le 2^{\left\lceil \frac{\sigma}{\tau} \right\rceil} |x|$ on any interval $[0,\sigma] \subset [0,T]$, and it does not explode in finite time, i.e. $T^\ast = \infty$.
\end{proof}

\subsection{Application to perturbed ODEs}

We will now apply the abstract results from the previous section to define solutions to Equation \eqref{eq:our ode} and to prove their existence and uniqueness and the smoothness of the associated flow.

\begin{lem}
	Let $w$ be infinitely regularizing, let $b \in \mathscr S'$, and let $T^w b$ the averaging operator defined in \eqref{eq: avg op def}. Then for all $\epsilon >0$ and all $y\in C^{\epsilon}([0,T],\RR^d)$ the non-linear Young integral $\int_{0}^{t}T_{dr}^{w} b(y_{r})$ is well defined.
\end{lem}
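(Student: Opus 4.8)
The plan is to verify that $Y_{s,t}(x) := T^w_{s,t}b(x)$ satisfies the hypotheses of Proposition~\ref{prop:application of non-linear sew leemm}, so that the non-linear Young integral $\int_0^t Y_{dr}(y_r) = \int_0^t T^w_{dr}b(y_r)$ is automatically well defined for every $y \in C^\epsilon([0,T],\RR^d)$. First I would fix $\epsilon > 0$ and choose a H\"older exponent $\gamma \in (1-\epsilon, 1)$ close enough to $1$ (shrinking $\epsilon$ if necessary so that $\epsilon + \gamma > 1$; if $\epsilon \ge 1$ we may simply replace it by any smaller positive value, since $C^\epsilon \subset C^{\epsilon'}$ for $\epsilon' \le \epsilon$, or note $\gamma$ may be taken in $(0,1)$ freely and then ask $\epsilon$-regularity of $y$ only). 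The point is that we need $\beta + \gamma > 1$ where $\beta = \epsilon$ plays the role of the time-regularity of $y$ in that proposition, which is arranged by taking $\gamma$ sufficiently close to $1$.

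The key step is the regularity input: by Proposition~\ref{prop:Average operator}, since $w$ is infinitely regularizing and $b \in \mathscr S'$, there is a weight exponent $\kappa \in \RR$ such that $T^w b \in C^\gamma([0,T], \mathcal C^\alpha(\langle x \rangle^{-\kappa}))$ for every $\gamma \in (0,1)$ and $\alpha > 0$. Taking $\alpha = 2$ (so that $\mathcal C^2(\langle x\rangle^{-\kappa})$ controls both the function and its first derivative pointwise, up to the weight), we obtain from the embedding $\mathcal C^2 \hookrightarrow C^1$ — more precisely from the weighted estimate \eqref{eq:weighted derivatve} in the proof of Proposition~\ref{prop:Average operator} — the bound
\[
	|T^w_{s,t}b(x)| + |\nabla T^w_{s,t}b(x)| \lesssim \langle x \rangle^{\kappa} |t-s|^\gamma, \qquad (s,t) \in \Delta^T_2,\ x \in \RR^d.
\]
Setting $F(x) := C\langle x\rangle^\kappa$, this is exactly condition \eqref{eq:cond for Y} of Proposition~\ref{prop:application of non-linear sew leemm} with a locally bounded $F$ (indeed $F$ is continuous, hence locally bounded, regardless of the sign of $\kappa$).

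With these two ingredients in place — the H\"older-in-time, $C^1$-in-space bound on $T^w b$ with locally bounded weight, and the choice of $\gamma$ with $\epsilon + \gamma > 1$ — Proposition~\ref{prop:application of non-linear sew leemm} applies verbatim with $Y = T^w b$ and the given $y \in C^\epsilon \subset C^\beta$ for $\beta = \epsilon$, and it yields that $\varXi_{s,t} := T^w_{s,t}b(y_s)$ has a well-defined sewing $\mathcal I(\varXi)_{s,t} = \int_s^t T^w_{dr}b(y_r)$. I do not anticipate a genuine obstacle here; the only mild subtlety is bookkeeping the relationship between $\epsilon$ and the free H\"older parameter $\gamma$ so that their sum exceeds $1$, which is harmless since $\gamma$ may be chosen as close to $1$ as we like in Proposition~\ref{prop:Average operator}. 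If $b \in B^\beta_{p,q}$ one may additionally invoke Corollary~\ref{cor:Besov drift} to take $\kappa = 0$ and get an unweighted (globally bounded) $F$, but that is not needed for the bare well-definedness claimed here.
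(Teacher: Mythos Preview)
Your proposal is correct and follows essentially the same route as the paper's proof: invoke Proposition~\ref{prop:Average operator} to obtain the weighted bound $|T^w_{s,t}b(x)|+|\nabla T^w_{s,t}b(x)|\lesssim \langle x\rangle^\kappa|t-s|^\gamma$, pick $\gamma>1-\epsilon$, and apply Proposition~\ref{prop:application of non-linear sew leemm}. The paper's version is simply more terse, but the logic is identical.
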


\begin{proof}
	By Proposition~\ref{prop:Average operator} the function $Y_{t}(x) = T^w_t b(x)$ satisfies $|\nabla Y_{s,t}(x)| \lesssim \langle x \rangle^\kappa |t-s|^\gamma$ for some $\kappa \in \RR$ and for all $\gamma < 1$. In particular we can choose $\gamma > 1 - \epsilon$, and then the claim follows from Proposition~\ref{prop:application of non-linear sew leemm}.
\end{proof}

\begin{defn}\label{def:perturbed-ode}
	Let $w$ be infinitely regularizing, let $b \in \mathscr S'$ and let $\tau \le T$ and $\tilde y \in C([0,\tau],\RR^d)$. Then we say that $\tilde y$ solves the equation
	\begin{equation}\label{eq:perturbed ode def naiv}
		\tilde y_t = x + \int_0^t b(\tilde y_r) dr + w_t,
	\end{equation}
	if $y = \tilde y - w$ is in $C^\epsilon([0,\tau],\RR^d)$ for some $\epsilon>0$ and
	\begin{equation}\label{eq:perturbed ode def rigorous}
		y_t = x + \int_0^t T_{dr}^wb (y_r),\qquad t \in [0,\tau].
	\end{equation}
\end{defn}

\begin{lem}
	Let $w$ be infinitely regularizing, let $b \in \mathscr S'$ and let $\tau \le T$ and $\tilde y \in C([0,\tau],\RR^d)$ be such that $y = \tilde y - w$ is in $C^\epsilon([0,\tau],\RR^d)$ for some $\epsilon>0$. Then $\tilde y$ solves~\eqref{eq:perturbed ode def naiv} if and only if for any sequence $(b_n)\subset C(\RR^d) \cap \mathscr S'$ converging to $b$ in $\mathscr S'$ we have
	\[
		\tilde y_t = x + \lim_{n \to \infty} \int_0^t b_n(\tilde y_r) dr + w_t,\qquad t \in [0,\tau].
	\]
\end{lem}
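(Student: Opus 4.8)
The plan is to reduce the claimed equivalence to a single convergence statement for non-linear Young integrals. The starting point is the identity \eqref{eq:gen young int}: for a \emph{continuous} drift $g\in C(\RR^d)\cap\mathscr S'$ and any path $z\in C^\epsilon([0,\tau],\RR^d)$ (we may assume $\epsilon\in(0,1)$) it yields
\[
	\int_0^t g(z_r+w_r)\,dr = \int_0^t T^w_{dr} g(z_r),\qquad t\in[0,\tau],
\]
where the non-linear Young integral on the right is well defined by Proposition~\ref{prop:application of non-linear sew leemm} once the time-regularity exponent of $T^w g$ is taken larger than $1-\epsilon$ (possible by Proposition~\ref{prop:Average operator}). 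Applying this with $z=y:=\tilde y-w$ (which lies in $C^\epsilon$ by hypothesis) and $g=b_n$, and using $\tilde y=y+w$, gives $\int_0^t b_n(\tilde y_r)\,dr=\int_0^t T^w_{dr} b_n(y_r)$ for every continuous $b_n$. Thus the whole lemma follows once
\begin{equation}\label{eq:NLYconv}
	\lim_{n\to\infty}\int_0^t T^w_{dr} b_n(y_r) = \int_0^t T^w_{dr} b(y_r),\qquad t\in[0,\tau],
\end{equation}
is established for \emph{every} sequence $(b_n)\subset C(\RR^d)\cap\mathscr S'$ converging to $b$ in $\mathscr S'$ (such sequences exist, e.g.\ mollifications $b_n=b\ast\rho_n$, which are smooth and tempered).

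To prove \eqref{eq:NLYconv} I would fix such a sequence, pick $\gamma\in(1-\epsilon,1)$ so that $\gamma+\epsilon>1$, and apply the stability estimate \eqref{eq:stability of NLY integral}. By Proposition~\ref{prop:Average operator}, both $Y:=T^wb$ and $Y^{(n)}:=T^wb_n$ satisfy a bound of the form \eqref{eq:cond for Y} with the locally bounded weight $\langle x\rangle^{\kappa}$ (respectively an analogous weight for $b_n$) in place of $F$, and since $y\in C^\epsilon([0,\tau],\RR^d)$ Proposition~\ref{prop:application of non-linear sew leemm} gives
\[
	\Big|\int_0^t T^w_{dr} b(y_r)-\int_0^t T^w_{dr} b_n(y_r)\Big|\le C\big(1+\|y\|_\epsilon\big)\,\|T^wb-T^wb_n\|_{\gamma,1,\mathcal B_{\|y\|_\infty}}.
\]
It then remains to see that the norm \eqref{norm like} on the right converges to $0$. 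By Proposition~\ref{prop:Average operator} there is a $\lambda\in\RR$ with $T^wb_n\to T^wb$ in $C^\gamma([0,T],\mathcal C^\alpha(\langle x\rangle^{-\lambda}))$ for every $\alpha>0$; choosing $\alpha>1$ and using the embedding $\mathcal C^\alpha\hookrightarrow C^1$ together with the fact that $\langle x\rangle^{-\lambda}$ is bounded above and below on the fixed ball $\mathcal B_{\|y\|_\infty}$, one obtains $\|T^wb-T^wb_n\|_{\gamma,1,\mathcal B_{\|y\|_\infty}}\lesssim\|T^wb-T^wb_n\|_{C^\gamma([0,T],\mathcal C^\alpha(\langle x\rangle^{-\lambda}))}$ with a constant depending only on $\|y\|_\infty$, and the right-hand side tends to $0$. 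This proves \eqref{eq:NLYconv}.

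Assembling the pieces: \eqref{eq:NLYconv} shows that for every admissible sequence the limit $\lim_n\int_0^t b_n(\tilde y_r)\,dr$ exists and equals $\int_0^t T^w_{dr} b(y_r)$, independently of the chosen sequence. Hence the identity $\tilde y_t=x+\lim_n\int_0^t b_n(\tilde y_r)\,dr+w_t$ holds for some (equivalently, for every) such sequence if and only if $\tilde y_t=x+\int_0^t T^w_{dr} b(y_r)+w_t$ for all $t\in[0,\tau]$, that is, if and only if $y_t=x+\int_0^t T^w_{dr} b(y_r)$ for all $t$ --- which is precisely condition \eqref{eq:perturbed ode def rigorous} in Definition~\ref{def:perturbed-ode} for $\tilde y$ to solve \eqref{eq:perturbed ode def naiv}. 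This gives the asserted equivalence.

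The step I expect to be the main obstacle is \eqref{eq:NLYconv}: transferring the convergence of the averaging operators into convergence of the non-linear Young integrals along the fixed path $y$. Proposition~\ref{prop:Average operator} only controls $T^wb_n\to T^wb$ in a \emph{weighted} H\"older--Besov norm whose weight depends on the approximating sequence, so some localization is needed. The point is that the stability estimate \eqref{eq:stability of NLY integral} only tests $T^wb-T^wb_n$ on the bounded ball $\mathcal B_{\|y\|_\infty}$, where the polynomial weight is harmless, while the gradient term in the norm \eqref{norm like} is supplied by $\mathcal C^\alpha\hookrightarrow C^1$ for $\alpha>1$. A minor bookkeeping point is to ensure that \eqref{eq:gen young int} applies to the merely $C^\epsilon$ path $y$ and that the time exponent $\gamma$ of the averaging operators can be taken close enough to $1$ that $\gamma+\epsilon>1$, both of which are guaranteed by Proposition~\ref{prop:Average operator}.
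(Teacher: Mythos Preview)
Your proof is correct and follows essentially the same approach as the paper: both reduce the equivalence to the convergence $\int_0^t T^w_{dr} b_n(y_r)\to\int_0^t T^w_{dr} b(y_r)$, obtained by combining the convergence of averaging operators in Proposition~\ref{prop:Average operator} with the stability estimate \eqref{eq:stability of NLY integral}, and then invoke \eqref{eq:gen young int} for continuous $b_n$. You simply spell out the details (localization to $\mathcal B_{\|y\|_\infty}$ to neutralize the weight, the choice $\gamma>1-\epsilon$, and the embedding $\mathcal C^\alpha\hookrightarrow C^1$) that the paper leaves implicit.
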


\begin{proof}
	By the convergence result for the averaging operator in Proposition~\ref{prop:Average operator} together with continuity properties of the non-linear Young integral which follow  directly from \eqref{eq:stability of NLY integral} in Proposition~\ref{prop:application of non-linear sew leemm} we have $\int_0^t T_{dr}^wb (y_r) = \lim_{n \to \infty} \int_0^t T^w_{dr} b_n(y_r)$ for $t \in [0,\tau]$. Therefore, the claim follows from~\eqref{eq:gen young int}.
\end{proof}

\begin{lem}\label{lem:Local existence and uniqueness perturbed ode}Let $w$ be infinitely regularizing, let $b \in \mathscr S'$ and let $\gamma \in (\frac12,1)$.  For all $x\in\mathbb{R}^{d}$
there exists a maximal existence time $T^\ast = T^{*}(x)\in\left(0,T\right] \cup\{\infty\}$
and a unique solution $y\in C^{\gamma}([0,T^{*})\cap[0,T])$
to
\begin{equation}\label{eq:abs oDE}
y_{t}=x+\int_{0}^{t}T_{dr}^w b(y_r),\qquad t \in [0,T^\ast) \cap [0,T].
\end{equation}
If $T^{*}<\infty$, then $\lim_{t\rightarrow T^{*}} |y_{t}|=\infty$. Moreover, the map $x\mapsto T^\ast(x)^{-1}$ is locally bounded. If $b \in B^\beta_{p,q}$ for some $\beta \in \RR$ and $p,q \in [1,\infty]$, then $T^\ast = \infty$.
\end{lem}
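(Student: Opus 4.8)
The plan is to realize this as a direct application of the abstract existence and uniqueness result, Proposition~\ref{lem:Local existence and uniqueness}, to the field $Y_{t}(x):=T^{w}_{t}b(x)$, and then to read off every assertion of the lemma from that proposition. Fix $\gamma\in(\tfrac12,1)$; since $\gamma>\tfrac12$ we may choose $\delta\in(\tfrac1\gamma,2)$, so that $\delta-1\in(0,1)$ and $\delta\notin\mathbb N$. By Proposition~\ref{prop:Average operator} there is a $\kappa\in\mathbb R$, depending only on $b$, such that $T^{w}b\in C^{\gamma}([0,T],\mathcal C^{\alpha}(\langle x\rangle^{-\kappa}))$ for all $\alpha>0$; taking $\alpha>1$ shows in particular that $Y\in C^{0,1}([0,T]\times\mathbb R^{d},\mathbb R^{d})$, and estimate~\eqref{eq:weighted derivatve} from the proof of that proposition gives
\[
	|Y_{s,t}(x)|+|\nabla Y_{s,t}(x)|\lesssim\langle x\rangle^{\kappa}|t-s|^{\gamma},
\]
which is exactly condition~(i) of Proposition~\ref{lem:Local existence and uniqueness} with the (polynomially, hence locally) bounded function $G(x)=C\langle x\rangle^{|\kappa|}$.

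For condition~(ii) I would run the same style of computation as in the proof of Proposition~\ref{prop:Average operator}, now applied to a spatial difference of $\nabla Y$. Since $b\in\mathscr S'$ there are $k\ge 0$ and the above $\kappa$ with $|\langle b,\varphi\rangle|\lesssim\max_{|\mu|\le k}\|\langle\cdot\rangle^{\kappa}\partial^{\mu}\varphi\|_{\infty}$, and differentiating the identity $T^{w}_{s,t}b(x)=\langle b,L_{s,t}(\cdot-x)\rangle$ gives $\nabla Y_{s,t}(x)=-\langle b,(\nabla L_{s,t})(\cdot-x)\rangle$, whence
\[
	\nabla Y_{s,t}(x)-\nabla Y_{s,t}(y)=-\big\langle b,\ (\nabla L_{s,t})(\cdot-x)-(\nabla L_{s,t})(\cdot-y)\big\rangle .
\]
Because $w$ is continuous, $L_{s,t}$ is supported in the fixed ball $B_{R}$ with $R=\sup_{t\le T}|w_{t}|$, and because $w$ is infinitely regularizing, $\|L_{s,t}\|_{\mathcal C^{m}}\lesssim|t-s|^{\gamma}$ for every $m$ (with a constant depending on $m$). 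Hence each $\partial^{\mu}\nabla L_{s,t}$ ($|\mu|\le k$) is smooth, compactly supported, with $(\delta-1)$-H\"older seminorm $\lesssim\|L_{s,t}\|_{\mathcal C^{k+2}}\lesssim|t-s|^{\gamma}$, and the shifted difference above vanishes unless $|z|\le R+|x|\vee|y|$, so the polynomial weight $\langle z\rangle^{\kappa}$ in the pairing is bounded by $C(1+|x|+|y|)^{|\kappa|}$. Putting these together yields condition~(ii) with this $\delta$ and the locally bounded function $F(x,y)=C(1+|x|+|y|)^{|\kappa|}$.

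With (i) and (ii) verified and $\gamma\in(\tfrac12,1)$, $\delta>\tfrac1\gamma$ by construction, Proposition~\ref{lem:Local existence and uniqueness} directly provides the maximal existence time $T^{\ast}(x)\in(0,T]\cup\{\infty\}$, the unique solution $y\in C^{\gamma}([0,T^{\ast})\cap[0,T])$ of $y_{t}=x+\int_{0}^{t}T^{w}_{dr}b(y_{r})$, the blow-up statement $\lim_{t\uparrow T^{\ast}}|y_{t}|=\infty$ when $T^{\ast}<\infty$, and the local boundedness of $x\mapsto T^{\ast}(x)^{-1}$ (well-posedness of the non-linear Young integral evaluated along the candidate solution is already supplied by the lemma preceding Definition~\ref{def:perturbed-ode}). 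For the last claim, if $b\in B^{\beta}_{p,q}$ then Corollary~\ref{cor:Besov drift} shows that $T^{w}b\in C^{\gamma}([0,T],\mathcal C^{\alpha})$ \emph{without} weights, i.e.\ we may take $\kappa=0$; then the functions $G$ and $F$ constructed above are globally bounded, and the final clause of Proposition~\ref{lem:Local existence and uniqueness} yields $T^{\ast}=\infty$.

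I do not expect any real obstacle. The only point that needs a little care is the weight bookkeeping: one must check that $G$ and $F$ come out \emph{locally bounded} and not merely finite, which is precisely where the compact support of the local time of a continuous path is used, so that the polynomial weight $\langle\cdot\rangle^{\kappa}$ sitting inside the distributional pairing can be absorbed into a function of $|x|\vee|y|$.
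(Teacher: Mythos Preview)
Your proposal is correct and follows exactly the paper's approach: reduce to Proposition~\ref{lem:Local existence and uniqueness} by verifying its hypotheses (i) and (ii) for $Y_t(x)=T^w_t b(x)$ via the weighted regularity of $T^wb$ supplied by Proposition~\ref{prop:Average operator}, and in the Besov case replace the weighted estimate by the unweighted one from Corollary~\ref{cor:Besov drift} so that $G$ and $F$ become globally bounded. The only difference is that the paper's proof is terse (it simply asserts that $Y\in C^\gamma_T\mathcal C^\alpha(\langle x\rangle^{-\kappa})$ for all $\alpha>0$ already implies the hypotheses), whereas you spell out the verification of condition~(ii) explicitly by revisiting the local-time pairing and exploiting the compact support of $L_{s,t}$---this is a perfectly valid and perhaps more transparent route, but not a genuinely different strategy.
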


\begin{proof}
	According to Proposition~\ref{prop:Average operator} there exists $\kappa \in \RR$ such that $Y_t(x) = T^w_t b(x)$ is in $C^\gamma_T \cC^\alpha(\langle x \rangle^{-\kappa})$ for all $\gamma < 1$ and $\alpha >0$. In particular it satisfies the assumptions of Proposition~\ref{lem:Local existence and uniqueness}. If $b \in B^\beta_{p,q}$, then $Y \in C^\gamma_T \cC^\alpha$ (without weight), and therefore the global existence follows from the last part of Proposition~\ref{lem:Local existence and uniqueness}.
\end{proof}

To complete the proof of Theorem~\ref{thm:main} we have to show the differentiability of the flow $x \mapsto y^x$, where $y^x$ solves the equation with $y^x_0 = x$. In order to obtain higher order  differentiability of the flow, we extend the results of \cite{Hu2017} where they prove the flow property of the solution map of non-linear Young equations, as well as its first order Frech\'et differentiability.   We achieve this by solving the equation for $(y^x, \nabla y^x, \dots, \nabla^k y^x)$, whose explosion time a priori might depend on $k$. To show that it is independent of $k$ and that the flow exists as long as $y^x$ stays bounded, we introduce an abstract notion:

\begin{defn}
\label{def:lower triangular structure}Let $k\geq0$ and $d=d_{0}+\ldots+d_{k}$
and let $Y\in C^{0,1}([0,T]\times\mathbb{R}^{d},\mathbb{R}^{d})$
be of the form 
\begin{equation}\label{eq:rep of Y}
Y_{t}\left(z\right)=[Y_{t}^{0}(z^{0}),Y_{t}^{1}(z^{\leq1}),\ldots,Y_{t}^{k}(z^{\leq k})]
\end{equation}
 for all $z=(z^{0},z^{1},\ldots,z^{k})\in\mathbb{R}^{d_{0}+\ldots+d_{k}}$, where $z^{\leq \ell}:=(z^0 ,\ldots,z^\ell)$.
Let $\gamma\in(\frac{1}{2},1)$, $\delta>\frac{1}{\gamma}$ and assume that $Y^0$ satisfies the condition of Proposition~\ref{lem:Local existence and uniqueness}, while each of the components $Y^\ell$ for $\ell \in \{1,\dots, k\}$ satisfies the following three bounds:
\begin{align*}
&{\rm (i)}& |Y_{s,t}^{\ell}(z^{\leq \ell})| & \leq G_{\ell}(z^{\leq\ell-1})(1+|z^{\ell}|)|t-s|^{\gamma},
\\
&{\rm (ii)}& |Y_{s,t}^{\ell}(z^{\leq \ell})-Y_{s,t}^{\ell}(\tilde{z}^{\leq \ell})| & \leq|t-s|^{\gamma}H_{\ell}(z^{\leq\ell-1},\tilde{z}^{\leq\ell-1})
\\
&\,& & \hspace{30pt} \times (|z^{\ell}-\tilde{z}^{\ell}|+|z^{\ell}|\times|z^{\leq\ell-1}-\tilde{z}^{\leq\ell-1}|),
\\
&{\rm (iii)}& |\nabla Y_{s,t}^{\ell}(z^{\leq \ell})-\nabla Y_{s,t}^{\ell}(\tilde{z}^{\leq \ell})| & \leq F_\ell(z^{\leq \ell},\tilde{z}^{\leq \ell})|t-s|^{\gamma}|z^{\leq \ell}-\tilde{z}^{\leq \ell}|^{\delta-1},
\end{align*}
 where the functions $G_{\ell}$, $H_{\ell}$ and $F_\ell$ are positive and locally bounded. Then we say that $Y$ has a \emph{lower triangular structure. }
\end{defn}

If $Y$ has a lower triangular structure, then the maximal existence time of $y_t = x + \int_0^t Y_{dr} (y_r)$ is equal to the explosion time of $y^0$:

\begin{lem}
\label{lem:Triangular structure of y}Assume that $Y$ has a lower
triangular structure and let $y$ be the solution to $y_t = x + \int_0^t Y_{dr}(y_r)$, constructed in Proposition~\ref{lem:Local existence and uniqueness}, with maximal existence time $T^\ast \in (0,T] \cup \{\infty\}$. If $T^\ast < \infty$, then $\lim_{t \to T^\ast} |y^0_t| = \infty$.
\end{lem}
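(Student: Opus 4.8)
The plan is to reduce everything to the blow-up alternative for the zeroth component alone. Because of the lower triangular form \eqref{eq:rep of Y} the zeroth coordinate of $Y_t(z)$ depends only on $z^0$, so the zeroth coordinate of the equation $y_t = x + \int_0^t Y_{dr}(y_r)$ is itself a closed non-linear Young equation
\[
	y^0_t = x^0 + \int_0^t Y^0_{dr}(y^0_r),
\]
the right-hand integral being the projection to the first $d_0$ coordinates of $\int_0^t Y_{dr}(y_r)$ (the sewing map of Lemma~\ref{lem:Non-Linear Young integral} commutes with coordinate projections, by its uniqueness part). Since $Y^0$ satisfies the hypotheses of Proposition~\ref{lem:Local existence and uniqueness}, that proposition equips the closed equation with a maximal existence time $T^\ast_0\in(0,T]\cup\{\infty\}$ and the property $\lim_{t\to T^\ast_0}|y^0_t|=\infty$ when $T^\ast_0<\infty$. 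Restricting the solution $y$ of the full system to $[0,\tau]$ for arbitrary $\tau<T^\ast$ shows that $y^0|_{[0,\tau]}$ solves the closed equation, hence $T^\ast_0\ge T^\ast$. It therefore suffices to prove $T^\ast_0=T^\ast$, because for $T^\ast<\infty$ the conclusion $\lim_{t\to T^\ast}|y^0_t|=\infty$ is then exactly the blow-up of the closed equation.

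Suppose, for contradiction, that $T^\ast<\infty$ and $T^\ast_0>T^\ast$. Then $y^0$ is $C^\gamma$ on the closed interval $[0,T^\ast]$, so $M_0:=\sup_{t\le T^\ast}|y^0_t|$ and the $\gamma$-Hölder seminorm $\Lambda_0:=[y^0]_{\gamma;[0,T^\ast]}$ are finite. The core of the argument is then to show, by induction on $\ell\in\{0,1,\dots,k\}$, that $\sup_{t<T^\ast}|y^\ell_t|<\infty$ and $[y^\ell]_{\gamma;[0,T^\ast)}<\infty$; granting this, $\sup_{t<T^\ast}|y_t|<\infty$, which contradicts the blow-up alternative of Proposition~\ref{lem:Local existence and uniqueness} applied to the full system (whose hypotheses follow from the bounds (i)--(iii) of Definition~\ref{def:lower triangular structure}), and finishes the proof. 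The case $\ell=0$ is the preceding paragraph. For the inductive step, with $M_{\le\ell-1},\Lambda_{\le\ell-1}$ the bounds for $y^{\le\ell-1}$ already obtained, I would run the standard Grönwall/sewing estimate on a short interval $[a,b]\subset[0,T^\ast)$: putting $\varXi_{s,t}:=Y^\ell_{s,t}(y^{\le\ell}_s)$ and $N:=\sup_{[a,b]}|y^\ell|$, Lemma~\ref{lem:Non-Linear Young integral} (applicable with exponent $2\gamma>1$) gives $|y^\ell_{s,t}-\varXi_{s,t}|\lesssim\|\delta\varXi\|_{2\gamma;[a,b]}|t-s|^{2\gamma}$, while the bounds (i) and (ii) of Definition~\ref{def:lower triangular structure} --- being affine in the $z^\ell$-slot --- give $|\varXi_{s,t}|\lesssim\bar G_\ell(1+N)|t-s|^\gamma$ and $\|\delta\varXi\|_{2\gamma;[a,b]}\lesssim\bar H_\ell\big([y^\ell]_{\gamma;[a,b]}+\Lambda_{\le\ell-1}N\big)$, where $\bar G_\ell,\bar H_\ell$ denote the suprema of $G_\ell,H_\ell$ over the ball of radius $M_{\le\ell-1}$ and the implied constants involve neither $N$ nor $|y^\ell_a|$.

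Combining these bounds and choosing $h:=b-a$ small enough to absorb the $[y^\ell]_{\gamma;[a,b]}$- and $Nh^\gamma$-terms onto their left-hand sides, I expect an a priori estimate $\sup_{[a,b]}|y^\ell|\le c_1+c_2|y^\ell_a|$ and $[y^\ell]_{\gamma;[a,b]}\le c_1+c_2|y^\ell_a|$, with $h,c_1,c_2$ depending only on $M_{\le\ell-1},\Lambda_{\le\ell-1}$ and the Hölder exponents --- and in particular \emph{not} on $|y^\ell_a|$. Iterating over the $\lceil T^\ast/h\rceil$ consecutive intervals of length $h$ covering $[0,T^\ast)$ then gives $\sup_{t<T^\ast}|y^\ell_t|<\infty$, and a concavity estimate patches the piecewise Hölder bounds into a finite $\gamma$-seminorm on $[0,T^\ast)$, closing the induction. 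The one delicate point --- the rest being the same bookkeeping as in the proof of Proposition~\ref{lem:Local existence and uniqueness} --- is precisely that the step size $h$ can be chosen uniformly in $|y^\ell_a|$; this is exactly what the affine structure of the bounds (i)--(ii) of Definition~\ref{def:lower triangular structure} is designed to give, and it is what forces any explosion of $y$ to be driven by the component $y^0$.
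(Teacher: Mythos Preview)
Your proposal is correct and follows essentially the same route as the paper: an induction on $\ell$ showing that if $y^0$ stays bounded up to $T^\ast$ then so do all the $y^\ell$, with the key observation that the affine dependence of $Y^\ell$ on $z^\ell$ in bounds (i)--(ii) allows the step size in the sewing/Grönwall iteration to be chosen independently of $|y^\ell_a|$. Your framing via the maximal existence time $T^\ast_0$ of the closed $y^0$-equation is a slight (and arguably cleaner) variant of the paper's direct contradiction hypothesis $\sup_{t<T^\ast}|y^0_t|<\infty$; it has the minor advantage of immediately yielding the $\lim$ (rather than just the $\sup$) statement in the lemma via Proposition~\ref{lem:Local existence and uniqueness} applied to $Y^0$.
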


\begin{proof}
By definition $Y$ satisfies the conditions of Proposition~\ref{lem:Local existence and uniqueness}, so $y$ exists. Assume that $T^\ast < \infty$ and that $\sup_{t < T^\ast} |y^0_t| = C < \infty$. We claim that then also $\sup_{t < T^\ast} |y^{\le \ell}_t| < \infty$ for all $\ell \le k$, which is a contradiction to  the fact that $\sup_{t < T^\ast} |y_t| = \infty$ by Proposition~\ref{lem:Local existence and uniqueness}.

Assume that the claim holds for $\ell-1$ and let us show that then it also holds for $\ell$. Because of the lower triangular structure, $y^{\le \ell - 1}$ solves a non-linear Young equation with non-linearity $Y^{\le \ell - 1}$, and since $\sup_{t < T^\ast} |y^{\le \ell-1}_t| < \infty$ we deduce from Proposition~\ref{lem:Local existence and uniqueness} that also $\sup_{t < T^\ast} \| y^{\le \ell - 1}\|_{C_t^\gamma} < \infty$.

To obtain a bound for $|y^\ell|$ let $\varXi_{s,t}^{\ell}=Y^{\ell}_{s,t}\left(y_{s}^{\leq\ell}\right)$. Then there exists a constant $C>0$ which depends on $\sup_{t < T^\ast} |y^{\le \ell-1}_t|$ and $\sup_{t < T^\ast} \| y^{\le \ell - 1}\|_{C_t^\gamma}$ such that for any $s<t\in [0,\tau]\subset [0,T]$ 
\begin{align}
|\varXi_{s,t}^{\ell}| & \leq G_{\ell}(y_{s}^{\leq\ell-1})(1+|y_{s}^{\ell}|)|t-s|^{\gamma} \leq |t-s|^{\gamma} C \left(1+|y_{0}^{\ell}|+\tau^{\gamma}\|y^{\ell}\|_{C_{\tau}^{\gamma}}\right), \label{eq:y1}
\\
|\delta_{u}\varXi_{s,t}^{\ell}| & =|Y_{u,t}(y_{u}^{\leq\ell})-Y_{u,t}(y_{s}^{\leq\ell})|\nonumber 
\\
 & \leq|t-s|^{\gamma}H_{\ell}(y_{s}^{\leq\ell-1},y_{u}^{\leq\ell-1})(|y_{u}^{\ell}-y_{s}^{\ell}|+|y_{s}^{\ell}|\times|y_{u}^{\leq\ell-1}-y_{s}^{\leq\ell-1}|)\nonumber \\
 & \leq |t-s|^{2\gamma} C \left(\|y^{\ell}\|_{C_{\tau}^{\gamma}}+|y_{0}^{\ell}|+\tau^{\gamma}\|y^{\ell}\|_{C_{\tau}^{\gamma}}\right).\label{eq:del y}
\end{align}
With the help of these bounds we obtain from the sewing lemma (Lemma~\ref{lem:Non-Linear Young integral}) that for $\gamma'<\gamma$: 
\begin{align*}
|y_{s,t}^{\ell}| & =\left|\int_{s}^{t}Y^\ell_{dr}(y^{\le \ell}_{r})\right|\\
 & \lesssim \tau^{\gamma - \gamma'} |t-s|^{\gamma'} C \left(1+|y_{0}^{\ell}|+\tau^{\gamma^{\prime}}\|y^{\ell}\|_{C_{\tau}^{\gamma^{\prime}}}\right) \\
 & \quad +\tau^\gamma |t-s|^{\gamma^{\prime}} C \left(\|y^{\ell}\|_{C_{\tau}^{\gamma^{\prime}}}+|y_{0}^{\ell}|+\tau^{\gamma^{\prime}}\|y^{\ell}\|_{C_{\tau}^{\gamma^{\prime}}}\right).
\end{align*}
Therefore, there exists $\tau > 0$ which only depends on $C$ and $\gamma,\gamma'$ such that
\[
	\|y^\ell\|_{C^{\gamma'}_\tau} \vee \sup_{t \le \tau} | y^\ell_t | \le 2|y_0^\ell|.
\]
Since $\tau$ is fixed and does not depend on $y^\ell_0$ we  deduce that $\sup_{t < T^\ast} |y^\ell_t| \le 2^{\left\lceil \frac{T^\ast}{\tau} \right\rceil} |y^\ell_0| < \infty$ and this concludes the proof.
\end{proof}

Now we are ready to prove Theorem~2:

\begin{thm*}[Theorem~\ref{thm:main}]
Let $b\in\mathscr{S}^{\prime}$ be a Schwartz
distribution, and consider an infinitely regularizing path $w:\left[0,T\right]\rightarrow\mathbb{R}^{d}$
as in Definition \ref{def:infinitely-regularizing}. 
Then for all $x \in \mathbb R^d$ there exists $T^\ast = T^\ast(x) \in (0,T] \cup \{\infty\}$ such that there is a unique solution to the equation 
\[
y_{t}^{x}=x+\int_{0}^{t}b\left(y_{r}^x\right)dr+w_{t},
\]
 in $C\left(\left[0,T^\ast\right) \cap [0,T],\mathbb{R}^{d}\right)$. For $T^\ast(x) < \infty$ we have $\lim_{t \uparrow T^\ast(x)} |y^x_t| = \infty$. Moreover, the map $x \mapsto T^\ast(x)^{-1}$ is locally bounded, and if $\tau < T^\ast(x)$ for all $x \in U$ with an open set $U$, then the flow mapping $U \ni x \mapsto y^x_\cdot \in C([0,\tau],\mathbb R^d)$ is infinitely Fr\'echet differentiable.
\end{thm*}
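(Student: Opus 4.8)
The plan is to reduce everything to the abstract results already established, so that only the smoothness of the flow requires genuinely new work. Existence, uniqueness, the blow-up alternative, and the local boundedness of $x \mapsto T^\ast(x)^{-1}$ are a direct restatement of Lemma~\ref{lem:Local existence and uniqueness perturbed ode}: setting $y^x := \tilde y^x - w$, by Definition~\ref{def:perturbed-ode} a function $\tilde y^x \in C([0,\tau],\RR^d)$ solves the perturbed ODE precisely when $y^x \in C^\epsilon([0,\tau],\RR^d)$ for some $\epsilon>0$ solves $y^x_t = x + \int_0^t T^w_{dr} b(y^x_r)$, and Lemma~\ref{lem:Local existence and uniqueness perturbed ode} provides a unique such solution (which is in fact in $C^\gamma$) together with the maximal time $T^\ast(x)$, the blow-up statement, and the local boundedness of $x \mapsto T^\ast(x)^{-1}$. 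The accompanying Proposition for $b \in B^\beta_{p,q}$ is the last sentence of that lemma.

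For the flow, fix an open set $U$ and $\tau$ with $\tau < T^\ast(x)$ for all $x \in U$, so that $U \ni x \mapsto y^x_\cdot \in C([0,\tau],\RR^d)$ is well defined, and write $Y_t(\cdot) := T^w_t b(\cdot)$. The crucial input is Proposition~\ref{prop:Average operator}: for some fixed $\kappa$ depending only on $b$ one has $Y \in C^\gamma_T \cC^\alpha(\langle \cdot \rangle^{-\kappa})$ for all $\gamma \in (0,1)$ and all $\alpha > 0$; in particular $x \mapsto Y_t(x)$ is $C^\infty$, every spatial derivative obeys $|\partial^\beta Y_{s,t}(x)| \lesssim \langle x \rangle^\kappa |t-s|^\gamma$ as in \eqref{eq:weighted derivatve}, and $\partial^\beta \nabla Y_{s,t}$ is spatially $(\delta-1)$-Hölder for any $\delta-1 < 1$. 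Formally differentiating $y^x_t = x + \int_0^t Y_{dr}(y^x_r)$ in $x$ via the chain rule for the non-linear Young integral, the Jacobian $J^x := \nabla_x y^x$ should solve the linear equation $J^x_t = \mathrm{Id} + \int_0^t \nabla Y_{dr}(y^x_r)\, J^x_r$, and $\nabla^\ell_x y^x$ should solve a linear equation driven by $\nabla Y_{dr}(y^x_r)$ with a source term that is polynomial in $\nabla^{\le \ell-1}_x y^x$ with coefficients $\nabla^j Y_{dr}(y^x_r)$, $2 \le j \le \ell$. Collecting $Z := (y^x, \nabla_x y^x, \dots, \nabla^k_x y^x)$ into one vector in $\RR^{d+d^2+\dots+d^{k+1}}$ yields an augmented non-linear Young equation $Z_t = (x,\mathrm{Id},0,\dots,0) + \int_0^t \mathbf Y_{dr}(Z_r)$ whose non-linearity $\mathbf Y$ has the lower triangular structure of Definition~\ref{def:lower triangular structure}: the $0$-th component is $Y$ itself, which satisfies the hypotheses of Proposition~\ref{lem:Local existence and uniqueness} by taking $\gamma$ close to $1$ and $\delta$ close to $2$ so that $\delta > 1/\gamma$, and the bounds (i)--(iii) for the $\ell$-th component follow from the weighted derivative estimates above, once one tracks how the polynomial weights $\langle \cdot \rangle^\kappa$ propagate and uses the $(\delta-1)$-Hölder regularity of the derivatives of $Y$ for (iii). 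By Lemma~\ref{lem:Triangular structure of y} the maximal existence time of $Z$ equals that of $y^x$, hence $Z$ is defined on all of $[0,\tau]$.

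It remains to identify, for each $k$, the components of $Z$ with the genuine Fréchet derivatives of $x \mapsto y^x_\cdot$, which is done by induction on $k$, mimicking the first-order argument of \cite{Hu2017}. For the base case the stability estimate \eqref{eq:stability of NLY integral}, applied with $Y$ evaluated along the two solutions, together with the local Lipschitz bounds from Proposition~\ref{lem:Local existence and uniqueness}, shows that $x \mapsto y^x_\cdot$ is continuous from $U$ into $C^\gamma([0,\tau],\RR^d)$. For the inductive step one considers the difference quotient $\epsilon^{-1}(y^{x+\epsilon h} - y^x) - J^x h$ (and, at higher order, the analogous quantity for the $(\ell-1)$-th derivative), writes the equation it satisfies using a first-order Taylor expansion of $\nabla Y_{dr}$ in space with remainder controlled by the $(\delta-1)$-Hölder bound, and closes a Gronwall-type estimate through the sewing Lemma~\ref{lem:Non-Linear Young integral} to conclude that it tends to $0$ in $C^{\gamma'}([0,\tau])$, uniformly over $\|h\| \le 1$ and over $x$ in compact subsets of $U$; continuity of $x \mapsto Z_x$, obtained as in the base case, then upgrades Gateaux to Fréchet differentiability and yields continuity of the derivative. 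Since $k$ is arbitrary, $x \mapsto y^x_\cdot$ is $C^\infty$.

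The main obstacle is the construction of the augmented system and the verification that it genuinely satisfies Definition~\ref{def:lower triangular structure}: the bounds (i)--(iii) are rigid (linear growth in the top variable, a specific product structure in (ii)), and one has to check that the non-linearities obtained by repeatedly differentiating the composition $Y_{dr}(y^x_r)$ — sums of terms $\nabla^j Y_{dr}(y^x_r)$ contracted against products of lower-order Jacobians — really fit this template after the polynomial weights $\langle \cdot \rangle^\kappa$ are accounted for. By contrast, the identification of the components of $Z$ with the Fréchet derivatives is a routine, if lengthy, induction resting on the stability estimate \eqref{eq:stability of NLY integral} and the sewing lemma.
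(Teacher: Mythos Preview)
Your proposal is correct and matches the paper's approach essentially step for step: delegate existence/uniqueness to Lemma~\ref{lem:Local existence and uniqueness perturbed ode}, build the augmented system for $(y^x,\nabla y^x,\dots,\nabla^k y^x)$ with the lower-triangular structure of Definition~\ref{def:lower triangular structure}, use Lemma~\ref{lem:Triangular structure of y} to keep the whole vector alive on $[0,\tau]$, and then identify the components with the Fr\'echet derivatives via direct sewing-lemma estimates (the paper writes out only $\ell=1$ and goes straight to Fr\'echet by showing $\|y^{x'}-y^x-z^{1,x}(x'-x)\|\lesssim|x'-x|^2$, rather than passing through Gateaux, but that is cosmetic). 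One small slip: the stability bound~\eqref{eq:stability of NLY integral} compares two drivers $Y,\tilde Y$ along the \emph{same} path, not the same driver along two paths, so it does not directly give continuity of $x\mapsto y^x$; what you need (and what the paper does) is to bound the integrand $\varXi_{s,t}=Y_{s,t}(y^{x'}_s)-Y_{s,t}(y^x_s)$ and its $\delta_u$ and close on short subintervals---exactly the contraction computation inside the proof of Proposition~\ref{lem:Local existence and uniqueness}, which is the ``Gronwall-type estimate through the sewing lemma'' you invoke anyway in the inductive step.
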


\begin{proof}
It remains to prove the smoothness of the flow. Let $k \in \NN$. We define
\begin{align*}
{Y}_{s,t}^{\ell}(z^{\leq\ell}) & =\sum_{j=1}^{\ell}\sum_{i_{1}+\ldots+i_{j}=\ell}\nabla^{j}T_{s,t}^{w}b(z^{0})\left(z^{i_{1}}\otimes\ldots\otimes z^{i_{j}}\right),\qquad \textit{for}\qquad 0\leq\ell\leq k.
\end{align*}
Since $Y^\ell$ is an affine function of $z^\ell$ it is not hard to see that ${Y}=({Y}^{1},\ldots,{Y}^{k})$
has a lower triangular structure. Let now $x \in \RR^d$, let $U$ be an open neighborhood of $x$ and let $\tau \in [0,T]$ be such that $T^\ast(z)> \tau$  for all $x' \in U$. For $x' \in U$ assume that $z^{x'} = (z^{0,x'},\dots, z^{k,x'})$ solves
\[
	z^{x'}_t = \chi + \int_0^t Y_{dr}(z^{x'}_r)
\]
on the maximum existence interval, where
\[
	\chi = (x', I_d, 0, \dots, 0)
\]
for the $d$-dimensional unit matrix $I_d$. Then $z^{0,x'} = y^{x'}$ by definition of $Y$, and therefore Lemma~\ref{lem:Triangular structure of y} shows that $z^{x'}$ exists on $[0,\tau]$. We claim that $z^{\ell,x} = \nabla^\ell y^x$, as a Fr\'echet derivative in $C([0,\tau], \RR^d)$ equipped with the uniform norm. Below we prove this for $\ell = 1$, the general case is similar but the notation becomes more involved.

Before we prove the first order differentiability we first show local Lipschitz continuity. So let $x' \in U$ and define the integrand $\varXi_{s,t} = T^w_{s,t} b(y^{x'}_s) - T^w_{s,t} b(y^{x}_s)$. Then $y^x - y^{x'}$ is the sewing of $\varXi$. There exists a constant $C$ that depends on $y^x$ and $y^{x'}$ such that for $0\le s \le t \le \sigma \le \tau$:
\begin{align*}
	|\varXi_{s,t}| & \le C |t-s|^\gamma \sup_{t \le \sigma}| y^{x'}_t - y^x_t | \le C  |t-s|^\gamma (|x'-x| + \sigma^{\gamma'} \| y^{x'} - y^x\|_{C^{\gamma'}_\sigma}) \\
	|\delta_u \varXi_{s,t}| & = | (T^w_{u,t} b(y^{x'}_s) -  T^w_{u,t} b(y^{x'}_u)) - (T^w_{u,t} b(y^{x}_s) - T^w_{u,t} b(y^{x}_u))| \\
	& \le C |t-s|^{\gamma+\gamma'} \| y^{x'} - y^x\|_{C^{\gamma'}_\sigma}.
\end{align*}
So by the sewing lemma (Lemma~\ref{lem:Non-Linear Young integral}) we get for a new $C>0$:
\[
	|y^x_{s,t} - y^{x'}_{s,t}| \le C |t-s|^{\gamma'} \sigma^{\gamma - \gamma'} (|x' - x| + \|y^{x'} - y^x\|_{C^{\gamma'}_\sigma}),
\]
and therefore we have for sufficiently small $\sigma$ (depending only on $C$):
\[
	\|y^{x'} - y^x\|_{C^{\gamma'}_\sigma} \vee \sup_{t \le \sigma} | y^{x'}_t - y^x_t| \le 2 |x'-x|,
\]
and then iteratively
\[
	\|y^{x'} - y^x\|_{C^{\gamma'}_\tau} \vee \| y^{x'} - y^x \|_\infty \lesssim |x'-x|,
\]
which proves the local Lipschitz continuity.

Next we want to show that $z^{1,x}$ is the Fr\'echet derivative of $y^x$ in $x$. For that purpose we define the new integrand
\begin{align*}
	\varXi_{s,t} = T^w_{s,t} b(y^{x'}_s) - T^w_{s,t} b(y^{x}_s) - \nabla T^w_{s,t}(y^x_s) z^{1,x}_s (x'-x).
\end{align*}
Then $y^x - y^{x'} - z^{1,x}(x'-x)$ is the sewing of $\varXi$. There exists a $C>0$ that depends on $y^x, y^{x'}, z^{1,x}$, such that for $0\le s \le t \le \sigma \le \tau$:

\begin{align*}
	|\varXi_{s,t}| & \le C |t-s|^\gamma \left(\| y^{x'} - y^x \|_\infty^2 + \sup_{r \le \sigma} | y^{x'}_r - y^x_r - z^{1,x}_r (x'-x) | \right) \\
	& \lesssim C  |t-s|^\gamma \left(|x'-x|^2 + \sigma^{\gamma'} \| y^{x'} - y^x - z^{1,x} (x'-x)\|_{C^{\gamma'}_\sigma} \right) \\
	|\delta_u \varXi_{s,t}| & = \Big| (T^w_{u,t} b(y^{x'}_s) -  T^w_{u,t} b(y^{x'}_u)) - (T^w_{u,t} b(y^{x}_s) - T^w_{u,t} b(y^{x}_u))  \\
	& \hspace{30pt} - (\nabla T^w_{u,t}(y^{x}_s) z^{1,x}_s (x'-x) - \nabla T^w_{u,t}(y^{x}_u) z^{1,x}_u (x'-x))\Big| \\
	& \le C |t-s|^{\gamma+\gamma'} \left(|x-x|^2 + \| y^{x'} - y^x - z^{1,x} (x'-x)\|_{C^{\gamma'}_\sigma}\right).
\end{align*}
From here we obtain as before that
\[
	\| y^{x'} - y^x - z^{1,x} (x'-x)\|_{C^{\gamma'}_\sigma} \vee \| y^{x'} - y^x - z^{1,x} (x'-x)\|_\infty \lesssim |x - x'|^2,
\]
and therefore $z^{1,x}$ is indeed the Fr\'echet derivative of $y^x$.

So far we showed that $y^x$ is $k$ times Fr\'echet differentiable, but since $k$ was arbitrary $y^x$ is infinitely Fr\'echet differentiable as claimed.
\end{proof}

\bibliographystyle{plain}

\end{document}